\documentclass[a4paper,11pt]{amsart}

\usepackage{amssymb}
\usepackage{stmaryrd}
\usepackage[pagebackref,
    ,pdfborder={0 0 0}
    ,urlcolor=black,a4paper,hypertexnames=false]{hyperref}
\hypersetup{pdfauthor=Clara L\"oh and Cristina Pagliantini,pdftitle=Integral foliated simplicial volume of hyperbolic $3$-manifolds}

\usepackage{pgf}
\usepackage{tikz}
\usetikzlibrary{decorations.pathmorphing}
\usepackage{tabularx}

\newtheorem{thm}{Theorem}[section]
\newtheorem{prop}[thm]{Proposition}
\newtheorem{lem}[thm]{Lemma}
\newtheorem{cor}[thm]{Corollary}

\newtheorem{question}[thm]{Question}

\theoremstyle{remark}
\newtheorem{rem}[thm]{Remark}
\newtheorem{exa}[thm]{Example}

\newtheorem*{ack}{Acknowledgements}

\theoremstyle{definition}
\newtheorem{defi}[thm]{Definition}
\newtheorem{setup}[thm]{Setup}

\usepackage{amsfonts}
\newcommand{\Z}{\mathbb{Z}}

\newcommand{\R}{\mathbb{R}}
\newcommand{\N}{\mathbb{N}}

\newcommand{\Hyp}{\mathbb{H}}
\newcommand{\Ga}{\Gamma}
\newcommand{\ga}{\gamma}
\newcommand{\La}{\Lambda}
\newcommand{\la}{\lambda}

\DeclareMathOperator{\id}{id}
\DeclareMathOperator{\vol}{vol}
\DeclareMathOperator{\covol}{covol}
\DeclareMathOperator{\res}{res}
\DeclareMathOperator{\Isom}{Isom}

\def\epsilon{\varepsilon}
\def\phi{\varphi}
\def\fa#1{\forall_{#1}\;\;\;}

\newcommand{\ucov}[1]{\widetilde{#1}}

\newcommand{\linfz}[1]{L^\infty(#1,\Z)}


 \makeatletter
 \newcommand\norm{\bBigg@{0.8}}

 \makeatother
 \newcommand{\inparens}[2][flex]{\csname #1l\endcsname(#2%
                                 \csname #1r\endcsname)\mathclose{}}
 \newcommand{\inangles}[2][flex]{\csname #1l\endcsname\langle#2%
                                 \csname #1r\endcsname\rangle\mathclose{}} 
 \newcommand{\innorm}[2][flex]{\csname #1l\endcsname|#2%
                                 \csname #1r\endcsname|\mathclose{}}
 \newcommand{\indnorm}[2][flex]{\csname #1l\endcsname\|#2%
                                 \csname #1r\endcsname\|\mathclose{}}
 \newcommand{\indnorml}[4][flex]{\csname #1l\endcsname\|#2%
                                 \csname #1r\endcsname\|_{#3}^{#4}\mathclose{}}

\newcommand{\lone}[2][flex]{\indnorml[#1]{#2}{1}{}}
\newcommand{\loneR}[2][flex]{\indnorml[#1]{#2}{1}{R}}
\newcommand{\rsv}[2][flex]{\indnorml[#1]{#2}{1}{\R}}
\newcommand{\zsv}[2][flex]{\indnorml[#1]{#2}{1}{\Z}}
\newcommand{\sv}[2][flex]{\indnorm[#1]{#2}}%
\newcommand{\isv}[2][norm]{\indnorml[#1]{#2}{\Z}{}}
  \newcommand{\pfcl}[2][flex]{\csname #1l\endcsname[#2%
                              \csname #1r\endcsname]}
  \newcommand{\ifsv}[2][norm]{\csname #1l\endcsname\bracevert\!#2\!%
                             \csname #1r\endcsname\bracevert}
\newcommand{\stisv}[2][flex]{\indnorml[#1]{#2}{\Z}{\infty}}

\usepackage{color}
\usepackage{pdfcolmk}

\def\draftinfo{}

\author{Clara L\"oh}
\address{Fakult\"at f\"ur Mathematik\\
         Universit\"at Regensburg\\
         93040~Regensburg\\
         }
\email{clara.loeh@mathematik.uni-r.de}
\author{Cristina Pagliantini}
\address{Department Mathematik\\
         ETH Zentrum\\
         8092~Z\"urich\\
        }
\email{cristina.pagliantini@math.ethz.ch}
\title[Integral foliated simplicial volume of hyperbolic $3$-manifolds]%
      {Integral foliated simplicial volume\\ of hyperbolic $3$-manifolds}
\date{\today.\ \copyright{\ C.~L\"oh, C.~Pagliantini 2013}, \draftinfo\\
     \qquad MSC~2010 classification: 57M27, 57M50, 20F67, 55N99}

\begin{document}

\begin{abstract}
  Integral foliated simplicial volume is a version of simplicial
  volume combining the rigidity of integral coefficients with the
  flexibility of measure spaces. In this article, using the language
  of measure equivalence of groups we prove a proportionality
  principle for integral foliated simplicial volume for aspherical
  manifolds and give refined upper bounds of integral foliated
  simplicial volume in terms of stable integral simplicial
  volume. This allows us to compute the integral foliated simplicial
  volume of hyperbolic \mbox{$3$-mani}\-folds.  This is complemented
  by the calculation of the integral foliated simplicial volume of
  Seifert $3$-manifolds.
\end{abstract}

\maketitle

\section{Introduction}

Integral foliated simplicial volume is a version of simplicial volume
combining the rigidity of integral coefficients with the flexibility
of measure spaces. If $M$ is an oriented closed connected manifold, then
the integral foliated simplicial volume~$\ifsv M$ fits into the sandwich 
\[ \sv M \leq \ifsv M \leq \stisv M, 
\]
where $\sv M$ is the classical simplicial volume~\cite{Gromov} and
where $\stisv M$ is the stable integral simplicial volume (stabilised
over all finite coverings of~$M$). 

Gromov~\cite[p.~305f]{Gromov3} suggested a definition of integral foliated
simplicial volume and an upper estimate of $L^2$-Betti numbers in
terms of integral foliated simplicial volume, which was confirmed by
Schmidt~\cite{mschmidt}. It is an open problem whether integral
foliated simplicial volume coincides with simplicial volume in the
case of aspherical manifolds; an affirmative answer would show that
aspherical manifolds with vanishing simplicial volume have vanishing
Euler characteristic, which is a long-standing open
problem~\cite[p.~232]{Gromov2}.
The only cases in which the integral foliated
simplicial volume has been computed are manifolds that split off an
$S^1$-factor and simply connected
manifolds~\cite[Chapter~5.2]{mschmidt}. Moreover, Sauer proved an
upper bound of a related invariant in terms of minimal
volume~\cite[Section 3]{sauerminvol}.

In the present article, we will prove the following:

\begin{thm}[integral foliated simplicial volume of hyperbolic $3$-manifolds]
  \label{mainthm}
  For all oriented closed connected hyperbolic $3$-manifolds~$M$ the
  integral foliated simplicial volume and the simplicial volume of~$M$
  coincide:
  \[ \ifsv{M} = \sv{M} = \frac{\vol(M)}{v_3}. 
  \]
  Here, $v_3$ denotes the maximal volume of ideal geodesic
  $3$-simplices in~$\Hyp^3$. 
\end{thm}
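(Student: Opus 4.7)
The plan is to establish the equality by combining the sandwich inequality from the introduction with the refined upper bound developed earlier in this paper and the work of Francaviglia--Frigerio--Martelli on stable integral simplicial volume of closed hyperbolic $3$-manifolds. Since $\sv{M} = \vol(M)/v_3$ is already known by Gromov and Thurston, only the identity $\ifsv{M} = \sv{M}$ has new content.

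For the lower bound, the sandwich $\sv{M} \leq \ifsv{M}$ stated in the introduction is immediate: integrating any parametrised fundamental cycle over its measure-theoretic parameter space produces an ordinary real fundamental cycle of no larger $\ell^1$-norm. Combined with Gromov--Thurston this already yields $\ifsv{M} \geq \vol(M)/v_3$.

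For the upper bound, I would feed the refined upper bound of $\ifsv{M}$ in terms of $\stisv{M}$, established earlier in this paper, into the theorem of Francaviglia--Frigerio--Martelli stating that for closed hyperbolic $3$-manifolds $\stisv{M} = \sv{M} = \vol(M)/v_3$. In more detail, for each $\varepsilon>0$ Francaviglia--Frigerio--Martelli produce, in a sufficiently large finite regular cover $M'\to M$, an integral fundamental cycle $c'$ with $\lone{c'}/[M':M] \leq \vol(M)/v_3 + \varepsilon$; the refined upper bound lets me pull such finite-cover cycles back through the profinite completion tower of $\pi_1(M)$ and reassemble them into a parametrised fundamental cycle on the profinite completion $\widehat{\pi_1(M)}/\pi_1(M)$ of the same normalised $\ell^1$-norm. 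Taking $\varepsilon\to 0$ gives $\ifsv{M}\leq \vol(M)/v_3$.

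The main obstacle is the upper bound, and specifically the conversion from finite-cover data to measurable data. Stable integral simplicial volume is defined as an infimum of rational quantities $\lone{c'}/[M':M]$ taken one cover at a time, and a priori such cover-by-cover information need not assemble into a single $\Z$-valued cocycle on a common probability space with controlled $\ell^1$-norm. Circumventing this requires two things that are supplied earlier in the paper: the proportionality principle for $\ifsv{\cdot}$ of aspherical manifolds (which, for lattices in $\Isom(\Hyp^3)$, legitimises reducing to a convenient measure equivalence coupling and thereby to a common parameter space), and the refined comparison between $\ifsv{\cdot}$ and $\stisv{\cdot}$ (which promotes integral fundamental cycles from finite covers to a measurable integral fundamental cycle on the profinite completion). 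Granting those two ingredients, the main theorem is a direct consequence of the classical computations of $\sv{M}$ and of $\stisv{M}$ for closed hyperbolic $3$-manifolds.
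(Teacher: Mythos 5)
Your lower bound and the overall skeleton (proportionality principle plus comparison with stable integral simplicial volume) match the paper, but the upper bound rests on a premise that is false, or at least unproven: you assert that Francaviglia--Frigerio--Martelli proved $\stisv{N}=\sv{N}=\vol(N)/v_3$ for closed hyperbolic $3$-manifolds, and your ``in more detail'' paragraph (finite covers $M'\to M$ carrying integral fundamental cycles with $\lone{c'}/[M':M]\le\vol(M)/v_3+\varepsilon$) is precisely a restatement of that equality for $M$ itself. Whether stable integral simplicial volume equals simplicial volume for hyperbolic $3$-manifolds is explicitly an \emph{open question} (see the end of Section~\ref{sec:simvol}); what Francaviglia--Frigerio--Martelli actually proved is the opposite-flavoured result that in dimensions $\ge 4$ there is a uniform constant $C_n<1$ with $\sv{M}\le C_n\cdot\stisv{M}$. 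So the input you want to feed into Corollary~\ref{cor:ifsvstisvhyp} does not exist in the form you describe, and no amount of reassembling finite-cover cycles on the profinite completion will conjure it.

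The missing idea is that one does not need $\stisv{N}=\sv{N}$ for any single manifold, only \emph{asymptotically} along a sequence of auxiliary manifolds. This is Theorem~\ref{thm:hyp3stisv}: by Dehn filling the $5$-chain link complement $\overline{M_{(5)}}$, whose special complexity $c_S(\overline{M_{(5)}})=10$ equals its simplicial volume and is realised by a semi-simplicial triangulation, one produces closed hyperbolic $3$-manifolds $M_n$ with $\stisv{M_n}\le 10$ (Proposition~\ref{prop2}, via spines of finite covers) while $\sv{M_n}\to 10$ by Thurston's Dehn filling theorem; hence $\stisv{M_n}/\sv{M_n}\to 1$. Corollary~\ref{cor:ifsvstisvhyp} then gives $\ifsv{M}\le (\vol M/\vol M_n)\cdot\stisv{M_n}=\sv{M}\cdot\stisv{M_n}/\sv{M_n}\to\sv{M}$. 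This construction is the real content of the upper bound and is entirely absent from your proposal; note also that it is special to dimension~$3$, which is why the paper's method does not extend to higher-dimensional hyperbolic manifolds. (A minor further point: the parameter space the paper uses for the explicit version is $\prod_{\Lambda\in S}G/\Lambda$ over lattices in $\Isom^+(\Hyp^3)$, and the profinite-type space of Theorem~\ref{thm:ifsvstisvgen} is explicitly \emph{not} needed for the main theorem.)
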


The second equality in Theorem~\ref{mainthm} is the classic
proportionality principle for hyperbolic manifolds of
Gromov~\cite{Gromov} and Thurston~\cite{Thu}, which also holds 
in more generality\cite{fra,fripa,fujman,BBI,strohm,bucher-karlsson,frigerio,bfs,loeh-sauer}.

The proof of Theorem~\ref{mainthm} consists of the following steps:
Using the language of measure equivalence of groups and techniques of
Bader, Furman and Sauer~\cite{bfs} we show that, similarly to
classical simplicial volume, also integral foliated simplicial volume
of aspherical manifolds satisfies a proportionality principle with
respect to certain parameter spaces (see Section~\ref{sec:pp} and
Section~\ref{sec:ifsv} for the definitions):

\begin{thm}[proportionality principle for integral foliated simplicial volume]
  \label{thm:ppgen}
  Let $M$ and $N$ be oriented closed connected aspherical manifolds of the same dimension 
  satisfying~$\sv M > 0$ and $\sv N > 0$. Suppose that there exists an ergodic 
  bounded  measure equivalence coupling~$(\Omega,\mu)$ of the fundamental groups $\Gamma$ 
  and $\Lambda$ of~$M$ and~$N$, respectively; let $c_\Omega$ be the coupling index of this 
  coupling. 
  \begin{enumerate}
    \item Then 
      \[ \ifsv M ^{\Lambda \setminus \Omega} = c_\Omega \cdot \ifsv N ^{\Gamma \setminus \Omega}. 
      \]
    \item If the coupling~$(\Omega,\mu)$ is mixing, then 
      \[ \ifsv M = c_\Omega \cdot \ifsv N. 
      \]
  \end{enumerate}
\end{thm}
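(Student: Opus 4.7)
The strategy I would follow is to adapt the proportionality argument of Bader--Furman--Sauer for classical simplicial volume to the parametrized integral setting. Because $M$ and $N$ are aspherical, the homology of $M$ with any $\Z\Ga$-module of coefficients may be computed from a free $\Z\Ga$-resolution of $\Z$; the plan is therefore to replace the singular chain complex by a bar-type resolution tensored with $L^\infty(\La\setminus\Omega,\Z)$ (and analogously for $N$ with $L^\infty(\Ga\setminus\Omega,\Z)$), each equipped with its natural $\ell^1$-norm. The parametrized invariants $\ifsv{M}^{\La\setminus\Omega}$ and $\ifsv{N}^{\Ga\setminus\Omega}$ are then infima of $\ell^1$-norms of fundamental cycles in these complexes, and part~(1) reduces to comparing these two complexes by chain-level transfer maps.

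The heart of the argument is to construct, from the coupling $(\Omega,\mu)$, a chain map from the $\Ga$-side to the $\La$-side (and a converse) that sends fundamental class to $c_\Omega$ times fundamental class (respectively $1/c_\Omega$). Choosing measurable fundamental domains $X\subset\Omega$ for~$\Ga$ and $Y\subset\Omega$ for~$\La$ and using the associated measure-equivalence cocycles, one can identify $L^\infty(\La\setminus\Omega,\Z)$ with $L^\infty(X,\Z)$ as $\Ga$-modules via cocycle-twisted actions, and symmetrically on the other side. This yields a combinatorial recipe that converts a bar-simplex together with a $\Z$-valued coefficient function on $\La\setminus\Omega$ into a corresponding chain on the $\La$-side, with the coefficient function redistributed over the fibres of $\Omega\to\Ga\setminus\Omega$. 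The bounded coupling assumption is essential: it forces the measure-equivalence cocycles to be essentially bounded, which translates into an $\ell^1$-bound on the transfer map. Composing the two transfers yields part~(1).

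Part~(2) is obtained from part~(1) by identifying the intrinsic invariant $\ifsv{M}$ with $\ifsv{M}^{\La\setminus\Omega}$ (and similarly for~$N$). The inequality $\ifsv{M}\le\ifsv{M}^{\La\setminus\Omega}$ is automatic, since $\ifsv{M}$ is the infimum of $\ifsv{M}^X$ over all standard $\Ga$-probability spaces~$X$. The reverse inequality is where the mixing hypothesis enters: mixing of $(\Omega,\mu)$ ensures that $\La\setminus\Omega$ is an ergodic $\Ga$-space whose product with any other ergodic $\Ga$-space remains ergodic, so that via the expected monotonicity of $\ifsv{M}^X$ under $\Ga$-equivariant measurable surjections, any competing parameter space can be dominated by $\La\setminus\Omega$ without increasing $\ell^1$-norms.

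The main obstacle is the chain-level transfer with integer coefficients. The Bader--Furman--Sauer argument for real simplicial volume uses measurable straightening of simplices and barycenter operations that produce convex combinations over~$\R$, and these have no integer analogue. With $\Z$-coefficients the transfer must be defined purely combinatorially on bar-generators, using the essentially bounded measure-equivalence cocycle to split coefficient functions over the fibres of $\Omega\to\La\setminus\Omega$ and $\Omega\to\Ga\setminus\Omega$. Verifying that this construction is a chain map, that it sends fundamental class to $c_\Omega$ times fundamental class, and that it remains $\ell^1$-bounded uniformly, is the technical core of the proof; a secondary subtlety is to make the monotonicity and mixing comparison in part~(2) work with $\Z$-valued coefficients, where products of parameter spaces are handled at the level of the integer $\ell^1$-norm rather than via integration against continuous bounded cochains.
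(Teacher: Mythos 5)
Your overall strategy is the same as the paper's: pass to group homology via the bar resolution (using asphericity), enlarge $\linfz{\cdot}$ to the $\ell^1$-dense submodule $L^1(\cdot,\Z)$, transfer chains through the ME-cocycle map $\La^{n+1}\times X_\Ga\to\Ga^{n+1}\times X_\La$ by summing the $\Z$-valued coefficient functions over fibres, use boundedness of the coupling to preserve finite supports and to control the $\ell^1$-norm up to the factor $c_\Omega$, and obtain part~(2) by coupling $\Omega$ with an arbitrary ergodic parameter space, invoking mixing to keep the product coupling ergodic, and then using the ``ergodic parameters suffice'' reduction together with monotonicity under products.

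There is, however, a concrete gap at the step you yourself defer to as the technical core: you give no mechanism for showing that the integral transfer sends the parametrised fundamental class to a \emph{nonzero} multiple of the parametrised fundamental class. A priori the class could die in $H_n(\Ga,L^1(X_\La,\Z))$, and then the transfer yields no lower bound at all. Note that the hypotheses $\sv M>0$ and $\sv N>0$ never enter your argument, although they appear in the statement precisely for this purpose. The paper closes this gap as follows: ergodicity of the coupling gives $H_n(\Ga,L^1(X_\La,\Z))\cong\Z$, so the image of the fundamental class is $m$ times the generator for some $m\in\Z$; mapping the whole diagram into real $\ell^1$-homology, where the Bader--Furman--Sauer transfer is (after rescaling by $c_\Omega$) an isometric \emph{isomorphism}, and using $\sv N>0$ shows this image is nonzero, hence $|m|\geq1$; the two norm inequalities obtained by exchanging the roles of $\Ga$ and $\La$ then force $|m|=1$. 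A secondary inaccuracy: part~(2) is not obtained by ``identifying $\ifsv M$ with $\ifsv{M}^{\La\backslash\Omega}$'' --- the measurable surjection $X\times(\La\backslash\Omega)\to\La\backslash\Omega$ gives monotonicity in the wrong direction for that. Instead one applies part~(1) to the new ergodic coupling $X\times\Omega$ and uses $\ifsv{N}\leq\ifsv{N}^{\Ga\backslash(X\times\Omega)}=c_\Omega\cdot\ifsv{M}^{X\times(\La\backslash\Omega)}\leq c_\Omega\cdot\ifsv{M}^{X}$, then infimises over ergodic~$X$.
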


Considering the coupling of uniform hyperbolic lattices given 
by the isometry group, we obtain:

\begin{cor}[a proportionality principle for integral foliated simplicial volume of hyperbolic manifolds]
  \label{cor:pphyp}
  Let $n \in \N$, and let $\Gamma, \Lambda < G := \Isom^+(\Hyp^n)$ be uniform lattices. Then
  \[ \frac{\ifsv{\Hyp^n/\Gamma}^{G/\Lambda}}{\covol(\Gamma)}
     =
     \frac{\ifsv{\Hyp^n/\Lambda}^{G/\Gamma}}{\covol(\Lambda)}
  \]
  and 
  \[ \frac{\ifsv{\Hyp^n/\Gamma}}{\covol(\Gamma)}
     =
     \frac{\ifsv{\Hyp^n/\Lambda}}{\covol(\Lambda)}.
  \]
\end{cor}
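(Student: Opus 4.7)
The plan is to apply Theorem~\ref{thm:ppgen} to the canonical measure equivalence coupling of $\Gamma$ and $\Lambda$ provided by $G$ itself, equipped with a (bi-invariant) Haar measure~$\mu$. For $n\geq 2$ the group $G=\Isom^+(\Hyp^n)$ is a connected simple Lie group with finite centre, hence unimodular, so that the commuting free actions $\Gamma \curvearrowright G \curvearrowleft \Lambda$ given by left and right multiplication make $(\Omega,\mu):=(G,\mu)$ a measure equivalence coupling of $\Gamma$ and $\Lambda$. Because $\Gamma$ and $\Lambda$ are uniform, one can choose relatively compact Borel fundamental domains $X_\Gamma,X_\Lambda \subset G$, which shows that the coupling is bounded, and with respect to the conventions of Theorem~\ref{thm:ppgen} the coupling index is
\[
 c_\Omega = \frac{\covol(\Gamma)}{\covol(\Lambda)} = \frac{\mu(X_\Gamma)}{\mu(X_\Lambda)}.
\]

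Next I would verify the ergodicity and mixing hypotheses. For $n\geq 2$ the group $G$ is a non-compact simple Lie group with finite centre, so the Howe--Moore theorem applies and the regular representation of $G$ on $L^2_0(\Lambda \setminus G)$ (and $L^2_0(G/\Gamma)$) has no non-trivial invariant vectors under any non-compact subgroup. In particular, the non-compact subgroup $\Gamma$ of $G$ acts mixingly on $\Lambda \setminus G$, and symmetrically, so the coupling $(G,\mu)$ is mixing and a fortiori ergodic. Since $M = \Hyp^n/\Gamma$ and $N = \Hyp^n/\Lambda$ are aspherical and satisfy $\sv{M}>0$ and $\sv{N}>0$ by the Gromov--Thurston theorem, all hypotheses of Theorem~\ref{thm:ppgen} are met.

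Now part~(1) of Theorem~\ref{thm:ppgen} gives
\[
 \ifsv{\Hyp^n/\Gamma}^{\Lambda\setminus G}
 = \frac{\covol(\Gamma)}{\covol(\Lambda)} \cdot \ifsv{\Hyp^n/\Lambda}^{\Gamma\setminus G},
\]
which, after dividing by $\covol(\Gamma)$ and identifying $\Lambda\setminus G$ with $G/\Lambda$ via $g\mapsto g^{-1}$ (using unimodularity of $G$), yields the first equality of Corollary~\ref{cor:pphyp}. Since the coupling is mixing, part~(2) of Theorem~\ref{thm:ppgen} then delivers the analogous statement for the absolute invariants, i.e.\ the second equality. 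The case $n=1$ is degenerate and may be handled separately, since all quantities vanish.

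I expect no deep obstruction here: the coupling is the classical one used by Furman and Bader--Furman--Sauer, and mixing of $\Gamma \curvearrowright \Lambda\setminus G$ is a textbook consequence of Howe--Moore. The only item requiring care is bookkeeping of the coupling-index convention so that the covolumes land under the correct manifold on each side of the displayed formulas; this is done by computing $c_\Omega$ directly from the measures of the chosen fundamental domains.
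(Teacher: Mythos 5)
Your proof is correct and coincides with the paper's own argument: both take $G$ with its Haar measure as a bounded ME-coupling of the two uniform lattices (boundedness coming from cocompactness, ergodicity and mixing from the Moore/Howe--Moore ergodicity theorem) and then apply the two parts of Theorem~\ref{thm:ppgen}, identifying the quotients of $G$ with the coset spaces $G/\Lambda$ and $G/\Gamma$. The one point worth flagging --- which you flag yourself --- is the coupling-index bookkeeping: your $c_\Omega=\mu(X_\Gamma)/\mu(X_\Lambda)=\covol(\Gamma)/\covol(\Lambda)$ is the reciprocal of the paper's formal definition $c_\Omega=\mu(X_\Lambda)/\mu(X_\Gamma)$, but it is the orientation under which the displayed formulas of the corollary actually come out (and it agrees with how the paper itself applies Theorem~\ref{thm:ppgen} in its proof of the corollary), so your final identities are the correct ones.
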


Therefore, when estimating integral foliated simplicial volume of a
hyperbolic manifold, we can call other hyperbolic manifolds for
help.

In particular, we obtain the following refinement of the upper bound
of integral foliated simplicial volume in terms of stable integral
simplicial volume:

\begin{cor}[comparing integral foliated simplicial volume and stable 
    integral simplicial volume, hyperbolic case]
  \label{cor:ifsvstisvhyp}
  Let $n \in \N$, and let $M$ and $N$ be oriented closed connected
  hyperbolic $n$-manifolds. Then
  \[ \ifsv M \leq \frac{\vol(M)}{\vol(N)} \cdot \stisv N.
  \]
\end{cor}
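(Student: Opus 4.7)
The plan is to combine the proportionality principle for integral foliated simplicial volume of hyperbolic manifolds (Corollary~\ref{cor:pphyp}) with the sandwich inequality $\ifsv{\cdot} \leq \stisv{\cdot}$ recalled in the introduction. Since $M$ and $N$ are oriented closed connected hyperbolic $n$-manifolds, their fundamental groups may be realised as uniform lattices $\Gamma, \Lambda < G := \Isom^+(\Hyp^n)$ with $M \cong \Hyp^n/\Gamma$ and $N \cong \Hyp^n/\Lambda$; in particular $\covol(\Gamma) = \vol(M)$ and $\covol(\Lambda) = \vol(N)$. (The orientation hypothesis is what ensures the lattices lie in the orientation-preserving subgroup~$G$.)

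First, I would invoke the second (unweighted) equality of Corollary~\ref{cor:pphyp} applied to the pair $(\Gamma,\Lambda)$ to obtain
\[ \frac{\ifsv M}{\vol(M)} = \frac{\ifsv N}{\vol(N)}, \]
and hence $\ifsv M = \frac{\vol(M)}{\vol(N)} \cdot \ifsv N$. Next, I would apply the upper half of the sandwich, $\ifsv N \leq \stisv N$, on the right-hand side to conclude
\[ \ifsv M \;=\; \frac{\vol(M)}{\vol(N)} \cdot \ifsv N \;\leq\; \frac{\vol(M)}{\vol(N)} \cdot \stisv N, \]
which is the claimed inequality.

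There is no significant obstacle in this deduction once Corollary~\ref{cor:pphyp} is available; the substantive content is bundled into that corollary, which itself rests on the mixing version of Theorem~\ref{thm:ppgen} applied to the canonical measure equivalence coupling $(G,\mu_G)$ of the two lattices, where the coupling index equals the ratio of covolumes. A mild variant worth noting is that the same argument run with an arbitrary finite cover $N'\to N$ in place of $N$ gives $\ifsv M \leq \frac{\vol(M)}{\vol(N')}\cdot \isv{N'}$, and passing to the infimum over finite covers recovers the stated bound via the definition of $\stisv N$; however this detour is not necessary, since the direct application of Corollary~\ref{cor:pphyp} to $M$ and $N$ themselves already yields the sharp statement.
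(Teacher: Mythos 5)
Your argument is correct and proves the corollary as stated, but it is organised differently from the paper's. You invoke the second (mixing) identity of Corollary~\ref{cor:pphyp} to get $\ifsv M = \frac{\vol(M)}{\vol(N)}\cdot\ifsv N$ and then apply the sandwich $\ifsv N \leq \stisv N$ of Proposition~\ref{prop:compsv}. The paper instead proves the slightly stronger Theorem~\ref{thm:ifsvstisvhyppar}: for each finite cover $N'\rightarrow N$ with fundamental group~$\Lambda'$ it applies the \emph{parametrised} proportionality (the first identity of Corollary~\ref{cor:pphyp}) to the pair of lattices $(\Gamma,\Lambda')$, bounds $\ifsv{N'}^{G/\Gamma}$ by $\isv{N'}$ via Proposition~\ref{prop:compisv}, and takes the infimum over covers --- exactly the ``detour'' you mention at the end. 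That detour buys two things: it needs only the ergodic (first) part of Theorem~\ref{thm:ppgen} rather than the mixing part, and it exhibits a concrete parameter space $\prod_{\Lambda'\in S} G/\Lambda'$ realising the bound, which the paper uses in Section~\ref{sec:mainthm} to state the refined version of Theorem~\ref{mainthm}. Your shortcut trades that extra information for brevity and is perfectly adequate for the corollary itself; the hypothesis $\sv M>0$, $\sv N>0$ implicit in Theorem~\ref{thm:ppgen} is automatic for closed hyperbolic manifolds, so nothing is missing.
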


Moreover, we exhibit concrete examples of parameter spaces that realize 
stable integral simplicial volume as integral foliated simplicial volume:

\begin{thm}[comparing integral foliated simplicial volume and stable integral simplicial volume, 
  generic case]
  \label{thm:ifsvstisvgen}
  Let $M$ be an oriented closed connected manifold with fundamental group~$\Gamma$ and let $S$ 
  be the set of finite index subgroups of~$\Gamma$. Then
  \[ \stisv M = \ifsv M^{\prod_{\Lambda \in S} \Gamma/\Lambda}. 
  \]
\end{thm}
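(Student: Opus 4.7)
The plan is to establish the equality as two matching inequalities. A central ingredient is the standard identification $\ifsv M^{\Gamma/\Lambda} = \isv{M_\Lambda}/[\Gamma:\Lambda]$ for each finite-index subgroup $\Lambda < \Gamma$, with $M_\Lambda$ the corresponding finite covering of $M$; this identification rests on the $\Z\Gamma$-module isomorphism $L^\infty(\Gamma/\Lambda, \Z) \otimes_{\Z\Gamma} C_*(\widetilde M, \Z) \cong C_*(M_\Lambda, \Z)$ together with the appropriate scaling of the $L^1$-norm, and is already present in Schmidt's thesis.

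For the upper bound $\ifsv M^{\prod_{\Lambda \in S} \Gamma/\Lambda} \leq \stisv M$, for each $\Lambda \in S$ the canonical projection $\prod_{\Lambda' \in S} \Gamma/\Lambda' \to \Gamma/\Lambda$ is $\Gamma$-equivariant and measure preserving, so pulling back along it sends integral fundamental cycles to integral fundamental cycles and preserves the $L^1$-norm. Hence $\ifsv M^{\prod_{\Lambda' \in S} \Gamma/\Lambda'} \leq \ifsv M^{\Gamma/\Lambda} = \isv{M_\Lambda}/[\Gamma:\Lambda]$, and taking the infimum over $\Lambda \in S$ gives the upper bound.

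The reverse inequality $\stisv M \leq \ifsv M^{\prod_{\Lambda \in S} \Gamma/\Lambda}$ is the core of the argument. Fix $\epsilon > 0$ and choose a fundamental cycle $c = \sum_i f_i \otimes \sigma_i$ in $L^\infty(\prod_{\Lambda \in S} \Gamma/\Lambda, \Z) \otimes_{\Z\Gamma} C_n(\widetilde M, \Z)$ with $\|c\|_1 \leq \ifsv M^{\prod_{\Lambda \in S} \Gamma/\Lambda} + \epsilon$. Since each coefficient $f_i$ is a bounded integer-valued measurable function on a product of finite discrete spaces, each of its level sets $f_i^{-1}(k)$ can be approximated in measure by a cylinder set depending on only finitely many coordinates, yielding a common finite subset $F \subset S$ and integer-valued cylinder approximations $f'_i$, pulled back from $\prod_{\Lambda \in F} \Gamma/\Lambda$, with $\|f_i - f'_i\|_1$ arbitrarily small. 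Writing $c' = \sum_i f'_i \otimes \sigma_i$, we get a chain in $L^\infty(\prod_{\Lambda \in F} \Gamma/\Lambda, \Z) \otimes_{\Z\Gamma} C_n(\widetilde M, \Z)$ with $\|c - c'\|_1$ small and, since $\partial c = 0$, with $\|\partial c'\|_1 \leq (n+1)\|c - c'\|_1$ also small. The finite $\Gamma$-set $\prod_{\Lambda \in F} \Gamma/\Lambda$ decomposes into orbits $\Gamma x_j$ with finite-index stabilizers $\Lambda_j$, producing an isomorphism $L^\infty(\prod_{\Lambda \in F} \Gamma/\Lambda, \Z) \otimes_{\Z\Gamma} C_*(\widetilde M, \Z) \cong \bigoplus_j C_*(M_{\Lambda_j}, \Z)$ under which $c'$ becomes a tuple $(\tilde c_j)_j$ of integer chains on the finite covers $M_{\Lambda_j}$. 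Integer rigidity (each nonzero integer boundary $\partial \tilde c_j$ contributes a definite amount to $\|\partial c'\|_1$) combined with a $\Gamma$-equivariant correction (restricting to the union of orbits with $\partial \tilde c_j = 0$ and filling the remaining small-norm boundaries) turns $c'$ into an honest cycle while losing only $O(\epsilon)$ of the total mass and the fundamental-class condition. A pigeonhole argument applied to the weighted identity $\sum_j \mu_j [\Gamma:\Lambda_j] a_j = 1$ (where $\mu_j$ is the orbit weight and $a_j \in \Z$ the degree of $\tilde c_j$ as a class in $H_n(M_{\Lambda_j}; \Z)$) then extracts an index $j_0$ for which $\isv{M_{\Lambda_{j_0}}}/[\Gamma:\Lambda_{j_0}] \leq \|c\|_1 + O(\epsilon)$; hence $\stisv M \leq \ifsv M^{\prod_{\Lambda \in S} \Gamma/\Lambda} + O(\epsilon)$. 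Letting $\epsilon \to 0$ closes the argument.

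The main obstacle is the approximation step in the lower bound: integer-valued coefficients are rigid under $L^1$-approximation, so cylinder approximations only yield approximate cycles rather than honest ones. The upgrade to a fundamental cycle of a finite cover requires a delicate combination of integer rigidity on each orbit (to ensure most boundary components already vanish), a $\Gamma$-equivariant correction of the remaining small defect without inflating the norm, and a final pigeonhole extraction of a single finite cover $M_{\Lambda_{j_0}}$ that realizes the bound.
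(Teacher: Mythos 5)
Your upper bound and your overall skeleton for the lower bound (reduce to finitely many coordinates, decompose the finite product into $\Gamma$-orbits $\Gamma/\Lambda_j$, identify the chain complex over each orbit with $C_*(M_{\Lambda_j},\Z)$ suitably weighted) agree with the paper's proof of Theorem~\ref{thm:prodcosetpar}. However, the way you carry out the reduction to finitely many coordinates has a genuine gap. You approximate the parametrised fundamental cycle $c$ itself by a cylinder chain $c'$ and then try to repair $c'$. This repair does not go through: (i)~if $\partial \tilde c_j\neq 0$ on an orbit, its contribution to $\ifsv{\partial c'}$ is only at least $w_j=\mu(\Gamma x_j)/[\Gamma:\Lambda_j]$ (the weight of a \emph{single point} of the orbit), and since the indices $[\Gamma:\Lambda_j]$ are unbounded and the finite set $F$ is itself determined by the approximation tolerance, you cannot conclude that the union of bad orbits has small measure -- the argument is circular; (ii)~``filling the remaining small-norm boundaries'' requires an integral filling inequality (a bound on the norm of an integral chain with prescribed integral boundary in terms of the norm of that boundary), which is not available and whose constant would in any case depend on the varying covers $M_{\Lambda_j}$; (iii)~the final pigeonhole is insufficient on two counts: extracting a single orbit $j_0$ with $a_{j_0}\neq 0$ only yields $\ifsv{c'}\geq w_{j_0}\lone{\tilde c_{j_0}}$, and $w_{j_0}[\Gamma:\Lambda_{j_0}]=\mu(\Gamma x_{j_0})$ can be arbitrarily small, so you do not get $\stisv M$ as a lower bound; moreover, a cycle representing $a\cdot[M_{\Lambda_{j_0}}]_\Z$ with $|a|\geq 2$ need \emph{not} have $\ell^1$-norm at least $\isv{M_{\Lambda_{j_0}}}$ -- this is exactly the failure of degree estimates for integral coefficients that the paper flags in its remark on lack of functoriality. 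One really needs $a_j=1$ on (essentially) every orbit, so that $\ifsv{c'}=\sum_j w_j\lone{\tilde c_j}\geq\sum_j\mu(\Gamma x_j)\cdot\isv{M_{\Lambda_j}}/[\Gamma:\Lambda_j]\geq\inf_{\Lambda}\isv{M_\Lambda}/[\Gamma:\Lambda]$.

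The paper avoids all of this by approximating the \emph{bounding chain} rather than the cycle: writing $c=c_\Z+\partial b$ with $c_\Z$ the constant integral fundamental cycle, one approximates $b$ by a chain $b'$ whose coefficients are pulled back from a finite subproduct; then $c_\Z+\partial b'$ is automatically a cycle, automatically represents the parametrised fundamental class (hence has degree $1$ over every orbit), lives in $C_*(M,L_F)$ for some finite $F\subset S$, and has norm within $\lone{\partial}\cdot\delta$ of $\ifsv c$. This is precisely the content of the dense-subcomplex isometry lemma (\cite[Lemma~2.9]{mschmidt}, \cite[Proposition~1.7]{loehphd}) applied to $L=\bigcup_{F}L_F\subset\linfz{X_{\Gamma,S}}$, after which Proposition~\ref{prop:convcombpar} and Corollary~\ref{cor:cosetpar} finish the estimate with no pigeonhole and no correction step. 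Your argument needs to be restructured along these lines; as written, the lower bound is not established.
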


While Theorem~\ref{thm:ifsvstisvgen} is not necessary to prove
Theorem~\ref{mainthm}, it is of independent interest in the context 
of Question~\ref{q:ifsvstisv}.

As last step in the proof of Theorem~\ref{mainthm}, in dimension~$3$,
we will use the following sequence of hyperbolic manifolds, based on a
variation of a result by Francaviglia, Frigerio and
Martelli~\cite{ffm}:

\begin{thm}[hyperbolic $3$-manifolds with small stable integral simplicial volume]
  \label{thm:hyp3stisv}
  There exists a sequence~$(M_n)_{n \in \N}$ of oriented closed
  connected hyperbolic $3$-manifolds with
  \[ \lim_{n \rightarrow \infty} \frac{\stisv{M_n}}{\sv{M_n}} = 1.
  \]
\end{thm}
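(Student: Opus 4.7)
The plan is to follow, with a minor adaptation, the Francaviglia--Frigerio--Martelli strategy of producing closed hyperbolic $3$-manifolds that admit triangulations with very few top-dimensional simplices; once such a sequence is in hand, the desired bound on stable integral simplicial volume falls out almost formally from the chain of inequalities $\sv M \leq \stisv M \leq \isv M \leq \sigma(M)$, where $\sigma(M)$ denotes the minimal number of top-dimensional simplices in a $\Delta$-complex structure (equivalently, a triangulation) of~$M$.

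The first and geometrically substantial step is to invoke (a variation of) the FFM construction to show that, for every $\epsilon > 0$, there exists an oriented closed connected hyperbolic $3$-manifold~$M$ together with a triangulation whose number of top-dimensional simplices satisfies
\[ \sigma(M) \leq (1+\epsilon) \cdot \frac{\vol(M)}{v_3}. \]
The idea is to start from an arithmetic hyperbolic $3$-manifold whose systole is large enough that the Margulis thin part is empty, distribute a sufficiently separated net of vertices, and geodesically straighten an associated triangulation. With injectivity radius large, a careful choice of vertices makes the resulting geodesic tetrahedra sufficiently close to regular ideal tetrahedra; their volumes are then within $\epsilon$ of the optimal value~$v_3$, which combined with additivity of volume bounds the simplex count as above.

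The second step is formal: the top-dimensional simplices of the triangulation, taken with the orientation signs $\pm 1$ prescribed by the orientation of~$M$, assemble into an integral fundamental cycle. Hence $\isv M \leq \sigma(M)$, and by considering the trivial cover in the definition of $\stisv$,
\[ \stisv M \leq \isv M \leq (1+\epsilon) \cdot \sv M. \]
The reverse estimate $\stisv M \geq \sv M$ is classical: integral fundamental cycles are real fundamental cycles, and the real simplicial volume is multiplicative under finite covers, so $\isv{\overline M}/d \geq \sv{\overline M}/d = \sv M$ for every $d$-fold cover~$\overline M \to M$. Applying the construction with $\epsilon = 1/n$ and setting $M_n$ to be the resulting manifold produces a sequence with $\stisv{M_n}/\sv{M_n} \in [1, 1 + 1/n]$, which yields the claimed limit.

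The main obstacle is concentrated in the first step, i.e., in the FFM-type construction of efficient triangulations: it combines arithmetic input (existence of congruence covers with arbitrarily large systole) with effective hyperbolic geometry (controlling how close a geodesic tetrahedron is to regular ideal in terms of the positions of its vertices). The variation from the published FFM statement that we require is of a technical nature: we need the explicit triangulation for a single manifold in the sequence, so as to read off an integral fundamental cycle whose $\ell^1$-norm is close to $\sv M$, rather than only a conclusion about a stabilised complexity invariant.
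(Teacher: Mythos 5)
Your second step (assembling the top-dimensional simplices of a triangulation into an integral fundamental cycle to get $\stisv M \le \isv M \le \sigma(M)$, together with the classical lower bound $\sv M \le \stisv M$) is fine. The gap is entirely in your first step, and it is a genuine one: the claim that for every $\epsilon>0$ some oriented closed connected hyperbolic $3$-manifold admits a triangulation with at most $(1+\epsilon)\cdot\vol(M)/v_3$ tetrahedra is not a known result, and it is not what Francaviglia--Frigerio--Martelli prove. Since every geodesic tetrahedron with finite vertices has volume strictly less than $v_3$, such a triangulation would have to consist almost entirely of tetrahedra that are nearly regular ideal; these have all dihedral angles close to $\pi/3$, which forces essentially six tetrahedra around every edge --- an extremely rigid combinatorial condition that no net-and-straightening argument is known to produce on a closed manifold. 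Worse, your step 1 would give $\isv M \le (1+\epsilon)\cdot \sv M$ for a single closed manifold, i.e.\ a bound on the \emph{unstabilised} integral simplicial volume, which is open; your closing remark that the stabilised invariant can be dispensed with points in exactly the wrong direction.

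The paper's route avoids this entirely and uses the stabilisation in an essential way. One starts from the \emph{cusped} $5$-chain link complement $M_{(5)}$, which does admit an ideal triangulation by $10$ regular ideal tetrahedra with each edge joining distinct cusps, so that $c_S(\overline{M_{(5)}})=10=\sv[norm]{\overline{M_{(5)}}}=\vol(M_{(5)})/v_3$ (Proposition~\ref{prop1}). The manifolds $M_n$ are Dehn fillings of $\overline{M_{(5)}}$: by Thurston's Dehn filling theorem $\sv{M_n}=\vol(M_n)/v_3\to 10$, while Proposition~\ref{prop2} gives the uniform bound $\stisv{M_n}\le c_S(\overline{M_{(5)}})=10$. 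The proof of that proposition is where residual finiteness of $\pi_1(M_n)$ enters: the special spine of $M_n$ obtained by capping the spine of $\overline{M_{(5)}}$ with meridian discs has extra vertices, and only after passing to finite covers in which the filling solid tori unwind $n_0$ times can one delete all but one pair of the lifted discs per solid torus, so that the extra vertex count per sheet tends to zero. Thus $\stisv{M_n}/\sv{M_n}\to 1$ along a sequence of \emph{distinct} manifolds, with no claim that any single closed hyperbolic $3$-manifold carries an efficient unstabilised triangulation. To repair your write-up you would have to replace your step 1 by this Dehn-filling-plus-finite-covers argument (or prove the efficient-triangulation claim, which would be a new and substantially stronger theorem).
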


Notice that it is still unknown whether stable integral simplicial
volume and simplicial volume coincide for hyperbolic $3$-manifolds.
Francaviglia, Frigerio, and Martelli~\cite{ffm} proved that in all
dimensions bigger than~$3$ a sequence of hyperbolic manifolds as in
Theorem~\ref{thm:hyp3stisv} does \emph{not} exist. Therefore, our
approach does not allow to compute the integral foliated simplicial
volume of higher-dimensional hyperbolic manifolds. Moreover, we do not
know whether integral foliated simplicial volume can be different from
stable integral simplicial volume for aspherical manifolds with enough
finite coverings:

\begin{question}\label{q:ifsvstisv}
  What is the difference between integral foliated simplicial volume and 
  stable integral simplicial volume of aspherical oriented closed connected 
  manifolds with residually finite fundamental group?
\end{question}

\subsection*{Organisation of this article} In Section~\ref{sec:simvol}, we
recall the definition and basic properties of (stable integral)
simplicial volume. In Section~\ref{sec:small3hyp}, we construct
hyperbolic $3$-manifolds with small stable integral simplicial volume,
which proves Theorem~\ref{thm:hyp3stisv}. Section~\ref{sec:ifsv} is an
introduction into integral foliated simplicial volume and basic
operations on parameter spaces. We prove the proportionality
principles Theorem~\ref{thm:ppgen} and Corollary~\ref{cor:pphyp} in
Section~\ref{sec:pp}. Section~\ref{sec:ifsvstisv} is devoted to the
refinements of the comparison between integral foliated simplicial
volume and stable integral simplicial volume and includes a proof of
Theorem~\ref{thm:ifsvstisvgen}.  Finally, in
Section~\ref{sec:mainthm}, we complete the proof of
Corollary~\ref{cor:ifsvstisvhyp} and
Theorem~\ref{mainthm}. Section~\ref{sec:stisvifsv3mnf} contains the
computation of integral foliated simplicial volume of Seifert
manifolds.  

\begin{ack}
  We would like to thank Roman Sauer, Roberto Frigerio, Bruno
  Martelli, and Marco Schmidt for numerous helpful discussions.  In
  particular, we would like to thank Roman Sauer for pointing out a
  mistake in the first version.  This work was partially supported by
  the Graduiertenkolleg \emph{Curvature, Cycles, and Cohomology}
  (Universit\"at Regensburg).  The second author was also partially
  supported by the Swiss National Science Foundation, under the grant
  2000200-144373.
\end{ack}


\section{Simplicial volume and (stable) integral simplicial volume}\label{sec:simvol}

In this section we will recall the definition of simplicial volume
introduced by Gromov~\cite{Gromov, loeh}  
and its integral version, which uses integral homology instead of
real homology.

Let $X$ be a topological space. Let
$R$ be a normed ring. 
In this section, we restrict only to the cases $R=\R$ or $\Z$.
For $i\in\N$ we denote by $S_i(X)$ the set of singular $i$-simplices
in $X$, by $C_i(X,R)$ the module
of singular $i$-chains with $R$-coefficients.
The homology of the complex $(C_{\ast} (X,R),\partial_\ast)$,
where $\partial_\ast$ is the usual differential, is 
the singular homology $H_\ast (X,R)$ of $X$ with coefficients
in $R$.

We endow the $R$-module $C_i (X,R)$ with the $\ell^1$-norm defined by
$$
\biggl\|\sum_{\sigma\in S_i(X)} a_\sigma\cdot\sigma \biggr\|_1^R=\sum_{\sigma\in S_i(X)} |a_\sigma|\ ,
$$
where $|\cdot|$ is the norm on $R$.
We denote the norm $\rsv{\cdot}$ simply by $\lone{\cdot}$.
The norm $\loneR{\cdot}$
descends to a semi-norm on $H_\ast (X,R)$, 
which is also denoted by~$\loneR{\cdot}$ and
is defined as follows:
if $\alpha\in H_i (X,R)$, then
$$
\loneR{\alpha}  =  \inf \{\loneR{c} \mid  c\in C_i (X,R),\, \partial c=0,\, 
[c ]=\alpha \} \ .
$$
Note that $\zsv{\cdot}$ on $H_\ast(\cdot,\Z)$ is technically not a semi-norm as 
it is not multiplicative in general (see below).

If $M$ is a closed connected oriented  $n$-manifold,
then we denote the fundamental class of $M$ by $[M]_\Z$, i.e.,~the 
positive generator of $H_n(M,\Z)\cong\Z$. The change 
of coefficients homomorphism $H_n(M,\Z)\longrightarrow H_n(M,\R)$
sends the fundamental class to the real fundamental
class $[M]_\R\in H_n(M,\R)$ of $M$.
The following definition is due to Gromov~\cite{Gromov}:

\begin{defi}[(integral) simplicial volume]
The \emph{simplicial volume} of $M$ is 
$$\sv{M}:= \lone{[M]_\R}\,\in\,\R_{\geq0}.$$ 
The \emph{integral simplicial volume} of $M$
is defined as $\isv{M}:=\zsv{[M]_\Z}\,\in\, \N$.
\end{defi}

Of course we have the inequality $\sv{ M}\leq \isv{M}$ but in general no
equality (for instance, $\sv[norm]{S^1}=0$ but $\isv{S^1}=1$).
The integral simplicial volume 
does not behave as nicely as the simplicial volume.
For example,
it follows from the definition that $\isv{M}\geq 1$ for every manifold $M$. 
Therefore, the integral simplicial volume cannot be multiplicative with respect 
to finite coverings (otherwise
it would vanish on manifolds that admit finite non-trivial self-coverings,
such as $S^1$). 
Moreover, as we mentioned before, the $\ell^1$-semi-norm on integral 
homology is not really a semi-norm, since the equality
$\zsv{n\cdot \alpha}=|n|\cdot \zsv{\alpha}$, may not hold for all 
$\alpha\in H_\ast(X;\Z)$ and all~$n\in \Z$. 
Indeed, it is easy to see that 
$\zsv{ n\cdot [S^1]_\Z}=1$ for every $n\in\Z\setminus\{0\}$.

We may consider a \emph{stable} version of the integral 
simplicial volume:

\begin{defi}[stable integral simplicial volume]
The \emph{stable integral simplicial volume} of an oriented closed
connected manifold~$M$ is
$$
\stisv{M}:=\inf\Bigl\{\frac{1}{d}\cdot \isv{\overline{M}}\,\Bigm{|}
\, d\in\N,\,\, \text{there is a $d$-sheeted covering} \,\,
\overline{M}\rightarrow M\Bigr\}.
$$
\end{defi}

Since the simplicial volume is multiplicative under finite coverings
\cite[Proposition 4.1]{loeh}, it is  clear that
$\sv{M}\leq \stisv{M}$, but in general they are not equal:
\begin{thm}[{\cite[Theorem 2.1]{ffm}}]
For every $n \in \N_{\geq 4}$ there exists a constant~$C_n<1$ such that the following holds:
Let $M$ be an oriented closed connected hyperbolic $n$-manifold. Then 
$$\sv{M}\leq C_n\cdot\stisv{M}.$$
\end{thm}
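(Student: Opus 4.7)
The plan is to establish a universal multiplicative gap~$C_n<1$ between the normalised integral simplicial volume of finite covers and the simplicial volume itself; the theorem then follows by passing to the stable invariant. The first step is to reduce to geodesic integral cycles. Given an integral fundamental cycle~$c=\sum a_i\sigma_i$ of a finite cover~$\overline M\to M$, apply geodesic straightening (lift to~$\Hyp^n$, replace each simplex by its straightened geodesic representative, push back down); this procedure is chain-homotopic to the identity, integer-preserving, and $\loneR{\cdot}$-non-increasing. One may thus assume each $\sigma_i$ is a geodesic simplex and compute $\vol(\overline M)=\sum a_i\cdot\vol(\sigma_i)$ with signed volumes by integrating the hyperbolic volume form. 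Because $|\vol(\sigma_i)|\le v_n$ for every geodesic simplex, this already yields the classical bound $\vol(\overline M)/v_n\le\isv{\overline M}$, i.e., $\sv M\le\stisv M$; the crux is to sharpen it by a universal multiplicative factor.

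The sharpening uses a rigidity input special to $n\ge 4$: the regular ideal simplex in~$\Hyp^n$ is the unique volume maximiser among ideal geodesic simplices, and its dihedral angles are irrational multiples of~$\pi$ (Haagerup--Munkholm). I would argue by contradiction: if no universal~$C_n<1$ existed, one could find integer geodesic fundamental cycles~$c_k$ on finite covers~$\overline M_k$ whose mass-weighted average simplex volume tends to~$v_n$. By a pigeonhole/compactness argument, almost all of the $\ell^1$-mass of~$c_k$ must sit on simplices that are $\epsilon_k$-close to the regular ideal simplex with~$\epsilon_k\to 0$. The integer-weighted configuration of simplices meeting around any $(n-2)$-face must close up, so integer linear combinations of dihedral angles of nearly regular ideal simplices must equal~$2\pi$. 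In the limit this would force the dihedral angle of the regular ideal simplex in~$\Hyp^n$ to be a rational multiple of~$\pi$, a contradiction whose quantification yields the desired~$C_n<1$ depending only on~$n$.

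The main obstacle is the quantitative packaging of this rigidity: one must simultaneously control the distribution of simplex shapes in~$c_k$ and the combinatorics of their face-gluings, with enough uniformity to extract the limit angle identity and to convert the eventual contradiction into an explicit multiplicative constant. This is the delicate part of the argument of Francaviglia--Frigerio--Martelli. It is essential that the strategy breaks down in dimension~$3$: the dihedral angle of the regular ideal tetrahedron equals~$\pi/3$, a rational multiple of~$\pi$, which is precisely why the complementary Theorem~\ref{thm:hyp3stisv} — and hence the computation of integral foliated simplicial volume in Theorem~\ref{mainthm} — is possible in this dimension.
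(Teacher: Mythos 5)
First, a point of reference: the paper does not prove this statement at all --- it is quoted verbatim from Francaviglia--Frigerio--Martelli~\cite{ffm} as background (to explain why the method of the paper is confined to dimension~$3$), so there is no internal proof to compare your attempt against; the only measure is whether your argument stands on its own. It does not: it is a roadmap whose decisive steps are missing, and you concede as much when you defer ``the delicate part'' to the very paper whose theorem you are reproving. The preliminary reductions are fine (straightening is norm non-increasing and chain homotopic to the identity, and the signed-volume identity $\vol(\overline M)=\sum_i a_i\vol(\sigma_i)$ for a straight fundamental cycle gives the classical bound), but everything after that is asserted rather than proved.

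Two gaps are substantive. First, an integral fundamental cycle is not a triangulation: its simplices may be degenerate, may overlap, and carry coefficients of either sign, so ``the integer-weighted configuration of simplices meeting around any $(n-2)$-face must close up'' is not even a well-posed statement until one extracts a facet-pairing (pseudomanifold-type) structure from the cancellation in $\partial c=0$ and shows that the geometric holonomy around a codimension-two face is trivial, which is what actually yields an angle identity. That reduction is the technical heart of the matter and is entirely absent. Second, even granting an identity of the form $\sum_i k_i\theta_i\in 2\pi\Z$ with all $\theta_i$ near the regular ideal dihedral angle $\theta_n$, irrationality of $\theta_n/\pi$ (which, incidentally, is not what Haagerup--Munkholm prove; their theorem is the uniqueness of the volume maximiser) gives no uniform lower bound on $\bigl|k\theta_n-2\pi m\bigr|$ over all integers $k,m$, hence no uniform constant $C_n<1$. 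One must additionally bound the number of simplices contributing around a face and use a quantitative rigidity statement (volume close to $v_n$ forces the shape, hence the dihedral angles, close to regular ideal); the relevant elementary fact is then that $\theta_n=\arccos\bigl(1/(n-1)\bigr)$ is not of the form $2\pi/k$ for $n\geq 4$. Your dimension-$3$ remark is correct --- six regular ideal tetrahedra close up exactly around an edge since $\theta_3=\pi/3$ --- and correctly explains why the complementary Theorem~\ref{thm:hyp3stisv} is possible, but as a proof of the stated theorem the proposal is incomplete.
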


For hyperbolic $3$-manifolds it is still an open question 
whether the simplicial volume
and the stable integral simplicial volume are the same:
our Theorem~\ref{thm:hyp3stisv} gives a partial answer.

\section{Hyperbolic $3$-manifolds\\ with small stable integral simplicial volume}
\label{sec:small3hyp}
 
In this section we will prove Theorem~\ref{thm:hyp3stisv} 
following an  argument of Francaviglia, Frigerio, and Martelli
\cite[Corollary 5.16]{ffm}. For the sake of completeness, we recall
some background on triangulations and special complexity. 

\begin{defi}[triangulation]
A \emph{triangulation} of a closed $3$-manifold~$M$ is a realization
of the manifold~$M$ as the gluing of finitely many tetrahedra via some
simplicial pairing of their faces.  Turning to the case of a compact
manifold $M$ with non-empty boundary $\partial M$, one can adapt the
notion of triangulation: an \emph{ideal triangulation} of $M$ is a
decomposition of its interior~$\text{Int}(M)$ into tetrahedra with
their vertices removed.  An (ideal) triangulation is
\emph{semi-simplicial} if all the edges have distinct vertices.
\end{defi}
\begin{defi}[special complexity]
Let $M$ be a compact $3$-manifold, possibly with boundary. The 
\emph{special complexity} $c_S(M)$ of $M$ is 
the minimal number of vertices in a 
special spine for $M$. 
\end{defi}
We refer to the works of Matveev \cite{Matv, Matv2}
for the definition of special spines and their properties. 
For our purpose we just need to recall that a special
spine is dual to a triangulation.
In particular, there is a bijection between the simplices in a triangulation
and the vertices in the dual special 
spine~\cite[Theorem~1.1.26 and Corollary~1.1.27]{Matv2}.
With an abuse of notation we call the \emph{true} vertices of a
special spine in Matveev's definition simply vertices of a special spine.

\begin{thm}[Matveev {\cite[Corollary~1.1.28]{Matv2}}]
\label{Matveev}
Let $M$ be a compact $3$-manifold whose interior $\text{Int}(M)$
admits a complete finite volume hyperbolic structure. 
Then there is a bijection between special spines and ideal
triangulations of $M$ such that the number of vertices in the special spine
is equal to the number of tetrahedra in the corresponding triangulation.
\end{thm}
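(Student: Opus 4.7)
The plan is to prove the bijection by explicit constructions in both directions, using the duality between cell decompositions and their dual $2$-skeleta, and then to verify that these constructions are mutually inverse and preserve the simplex/vertex count.

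First, given an ideal triangulation $T$ of $M$, I would construct a special spine $P_T$ as the dual $2$-complex. Concretely, in each ideal tetrahedron $\Delta$ of $T$ I place a single point $v_\Delta$ (its barycenter), and for each $2$-face shared by two tetrahedra I place an arc joining the two corresponding barycenters through the midpoint of the face; for each edge $e$ of $T$ I place a $2$-cell whose boundary circles around $e$ and passes through the barycenters and midpoints of the tetrahedra and faces incident to $e$. Removing small open neighbourhoods of the ideal vertices (one per cusp of $M$), I would verify that this $2$-complex $P_T$ is a spine of $M$ (the complement deformation retracts to the cusps), that every point has a neighbourhood of one of Matveev's three standard types, and that the true vertices of $P_T$ are exactly the barycenters of the tetrahedra. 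This already gives $c_S(M) \leq$ (number of tetrahedra in any ideal triangulation), and the count $\#\text{true vertices}(P_T) = \#\text{tetrahedra}(T)$.

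For the reverse direction, given a special spine $P$ of $M$ with true vertex set~$V(P)$, I would analyse the stratification of $P$ into vertices, edges (the $1$-stratum) and $2$-cells, and look at the connected components of $M \setminus P$. By the definition of a special spine, a regular neighbourhood of $P$ has complement consisting of collar neighbourhoods of the boundary components (or, after removing the cusps, open $3$-balls that contract to the cusps). The local model at a true vertex is the cone on the $1$-skeleton of a tetrahedron, so each true vertex $v \in V(P)$ has a neighbourhood dual to a single tetrahedron~$\Delta_v$. I would then glue these tetrahedra along the faces corresponding to edges of $P$, with the edges of the tetrahedra corresponding to $2$-cells of $P$; since the $2$-cells of $P$ are discs this gluing is well defined and produces an ideal triangulation $T_P$ whose number of tetrahedra equals $|V(P)|$.

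The main point to check carefully is that the two constructions are inverse to each other. Starting with an ideal triangulation $T$, the dual spine $P_T$ has exactly one true vertex per tetrahedron, and the local tetrahedral model at each true vertex recovers the original tetrahedron; running the dual construction back then returns $T$. Starting with a special spine $P$, the dual-of-dual construction gives back~$P$ up to the canonical isotopy that moves barycenters of the reconstructed tetrahedra to the original vertices. I expect the hardest step to be the careful handling of the cusped (non-compact) setting: one must check that the dual construction on an ideal triangulation really produces a \emph{spine} of $M$ (not of a larger space) and, conversely, that the balls in $M \setminus P$ are in bijection with the cusps so that the dual tetrahedra have the correct ideal-vertex structure. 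This is where the hypothesis that $\operatorname{Int}(M)$ admits a complete finite volume hyperbolic structure enters: it guarantees that the boundary components are tori, which controls the link of each ideal vertex and ensures that the dual $2$-complex is exactly special rather than merely simple. Once this is established, the equality of the number of tetrahedra and true vertices is immediate from the constructions.
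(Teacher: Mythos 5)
The paper does not actually prove this statement: it is quoted directly from Matveev's book \cite[Corollary~1.1.28]{Matv2} and used as a black box, so there is no in-paper argument to compare yours against. Your dual-complex strategy (barycenters of tetrahedra as true vertices, dual arcs through faces, dual polygons around edges, and the local tetrahedral model at a true vertex for the converse) is exactly the construction Matveev uses, and the equality of the number of tetrahedra with the number of true vertices is indeed immediate once the duality is in place.

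There is, however, one genuine gap in your sketch. The non-trivial content of the statement is not the existence of the dual $2$-complex -- that is always a \emph{simple} spine -- but the verification that it is \emph{special} (every $2$-component an open disc whose characteristic map behaves correctly, every $1$-stratum an open arc) and that, conversely, every special spine of $M$ is dual to an honest ideal triangulation of $M$ itself. You assert this and attribute it to the boundary components being tori, but torus boundary is not the relevant mechanism. The possible failures are local-combinatorial: for instance, an edge of the ideal triangulation that is identified with itself in a way that folds the dual polygon onto itself destroys the disc property of the corresponding $2$-component, and in the converse direction one must know that the ideal tetrahedra reconstructed from the true vertices of $P$ glue up to the interior of $M$ and not to some other manifold with the same spine. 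Excluding these degeneracies is where the standing hypotheses on $M$ and Matveev's reconstruction theorem for special polyhedra actually do the work. As written, the sentence ``ensures that the dual $2$-complex is exactly special rather than merely simple'' is the theorem itself, and your proposal does not supply an argument for it.
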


\begin{rem}\label{complexity}
Matveev~\cite{Matv} introduced the more general notion of complexity, 
which involves spines that are not necessarily dual to triangulations
and showed that complexity and special complexity are equal for any closed
orientable irreducible $3$-manifold distinct from 
$S^3$, $\mathbb{RP}^3$, and $L(3,1)$.
\end{rem}
\begin{rem}
Matveev~\cite{Matv} and Martelli~\cite{mar} used two slighly different
definitions of spines but the two notions are equivalent and lead
to the same complexity \cite[Section 7]{mar}. Coherently with 
\cite{ffm} we use Martelli's definition.
\end{rem}

\begin{rem}\label{semi-simplicial}
Let $M$ be an oriented closed connected manifold and $T$ be a semi-simplicial
triangulation of $M$.
Fixing an order of the vertices and 
a suitable choice of orientation-preserving parametrisations 
$\sigma_1,\dots,\sigma_k$ of simplices of
$T$, then $\sigma_1+\dots+\sigma_k$
represents the integral 
fundamental class of $M$.
Therefore the number of tetrahedra in a semi-simplicial triangulation
provides an upper bound for the integral simplicial volume of $M$.
\end{rem}
In order to adapt the proof of Francaviglia, Frigerio and 
Martelli~\cite[Corollary 5.16]{ffm}
we need the following results:
\begin{prop}\label{prop1}
Let $\overline{M_{(5)}}$ be the compactification of the $5$-chain link complement 
$M_{(5)}$. Then we have: 
$$c_S(\overline{M_{(5)}})= 10 = \sv{\overline{M_{(5)}}}.$$
Moreover, the value of $c_S(\overline{M_{(5)}})$ 
is realized by a special spine dual to a 
semi-simplicial triangulation. 
\end{prop}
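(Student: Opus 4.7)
The proof naturally splits into three steps: computing the simplicial volume, establishing the lower bound $c_S(\overline{M_{(5)}}) \geq 10$, and exhibiting a semi-simplicial triangulation with $10$ tetrahedra realising the upper bound.

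\emph{Step~1: Simplicial volume.} The $5$-chain link complement~$M_{(5)}$ admits a classical complete hyperbolic structure obtained by gluing $10$ regular ideal tetrahedra (this is the starting point of the argument in~\cite{ffm}), so that $\vol(M_{(5)}) = 10 \cdot v_3$. Applying the proportionality principle of Gromov-Thurston for finite-volume hyperbolic $3$-manifolds to the compactification then yields $\sv{\overline{M_{(5)}}} = \vol(M_{(5)})/v_3 = 10$.

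\emph{Step~2: Lower bound on the special complexity.} Let $P$ be a special spine of~$\overline{M_{(5)}}$ with $n$ vertices. By Theorem~\ref{Matveev}, $P$ is dual to an ideal triangulation~$T$ of~$\overline{M_{(5)}}$ with exactly $n$ tetrahedra. Straightening the simplices of~$T$ to geodesic ideal tetrahedra in~$\Hyp^3$ and summing their volumes gives $\vol(M_{(5)}) \leq n \cdot v_3$, so that $n \geq 10$, i.e.~$c_S(\overline{M_{(5)}}) \geq 10 = \sv{\overline{M_{(5)}}}$.

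\emph{Step~3: Upper bound and semi-simpliciality.} For the matching upper bound I would reproduce the explicit construction from~\cite{ffm}: write down the combinatorial $10$-tetrahedra ideal triangulation~$T$ of~$M_{(5)}$ together with its face identifications, and check directly from the gluing data that each edge of~$T$ has two distinct endpoints, so that $T$ is semi-simplicial. Theorem~\ref{Matveev} then converts~$T$ into a special spine of~$\overline{M_{(5)}}$ with exactly $10$ vertices dual to~$T$, which shows $c_S(\overline{M_{(5)}}) \leq 10$ and realises the equality by a spine of the required type. The main obstacle here is the semi-simplicial verification itself: the numerical bound $c_S \leq 10$ follows as soon as one exhibits any ideal triangulation with $10$ tetrahedra, but ensuring that such a minimal triangulation can be chosen semi-simplicial requires a hands-on combinatorial analysis of the $5$-chain link and its face pairings, and this is the step that must be carried out carefully following~\cite{ffm}.
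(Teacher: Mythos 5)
Your proposal is correct and follows essentially the same route as the paper: proportionality gives $\sv{\overline{M_{(5)}}}=10$, Matveev's duality plus Francaviglia's straightening argument gives the lower bound $c_S(\overline{M_{(5)}})\geq \sv{\overline{M_{(5)}}}$, and the standard $10$-tetrahedron ideal triangulation gives the upper bound. The only difference is in your Step~3: where you propose a hands-on combinatorial check of semi-simpliciality, the paper simply cites the known fact that in this triangulation each edge has its two vertices in different cusps, which immediately yields the required property.
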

\begin{proof}
The $5$-chain link complement has a hyperbolic structure 
\cite{NR} and admits an ideal triangulation  
with $10$ ideal 
and regular tetrahedra
such that each edge has vertices in different cusps
\cite[Section 5.2]{gordon}.  
By the proportionality between simplicial volume and Riemannian
volume (which holds both in the compact case~\cite{Gromov,Thu} and in 
the cusped case~\cite{fra,fripa,fujman,BBI}) we have 
$$\sv{\overline{M_{(5)}}}=\frac{\vol(M_{(5)})}{v_3}=10.$$
Moreover, Theorem~\ref{Matveev} implies
$$c_S(\overline{M_{(5)}})\leq 10. $$

The equality follows from the fact that for every oriented connected  
finite volume hyperbolic $3$-manifold $M$ with compactification~$\overline M$ 
the inequality $\sv{\overline{M}}\leq c_S(\overline{M})$ holds.
Indeed, an argument by Francaviglia~\cite[Theorem 1.2 and Proposition 3.8]{fra}
guarantees that the volume of $M$ can be computed by straightening 
any ideal triangulation of $M$ and then summing the volume of the
straight version of the tetrahedra.
\end{proof}

\begin{prop}\label{prop2}
Let $N$ be the compactification of a finite volume oriented connected
hyperbolic $3$-manifold and suppose that $N$ admits a semi-simplicial
triangulation that realizes the value of $c_S(N)$.
Let $M$ be a manifold obtained by Dehn filling on~$N$.
Then 
$$
\stisv{M}\leq c_S(N).
$$
\end{prop}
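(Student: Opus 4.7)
The plan is to emulate the strategy of \cite[Corollary~5.16]{ffm}: we build a sequence of finite covers $\overline M_d \to M$ of degree~$d$ (with $d\to\infty$) and establish the bound $\isv{\overline M_d} \leq d\cdot c_S(N) + o(d)$, which on dividing by~$d$ and letting~$d\to\infty$ immediately yields $\stisv M \leq c_S(N)$.

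To this end, fix a semi-simplicial triangulation~$T$ of~$N$ with $c_S(N)$ tetrahedra, as provided by the hypothesis and Theorem~\ref{Matveev}; write $M = N \cup V_1 \cup \dots \cup V_r$ as the result of the Dehn filling, where each~$V_i$ is a solid torus glued to a boundary component~$T_i \subset \partial N$ along a slope curve~$m_i$. Using residual finiteness of~$\pi_1(N)$ (valid since $N$ is hyperbolic), one finds a tower of finite regular covers $p_d \colon \overline N_d \to N$ of degree~$d$ in which, on every lift of every boundary torus~$T_i$, the lifted filling slope has arbitrarily large combinatorial length with respect to the induced triangulation as~$d\to\infty$. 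Pulling~$T$ back via~$p_d$ gives a semi-simplicial triangulation~$T_d$ of~$\overline N_d$ with exactly $d\cdot c_S(N)$ tetrahedra, and the Dehn filling lifts canonically to yield a $d$-sheeted cover $\overline M_d \to M$ obtained by gluing solid tori to~$\overline N_d$.

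It remains to extend~$T_d$ across each lifted filling solid torus in a semi-simplicial way without introducing too many tetrahedra. The key combinatorial input is that a solid torus whose boundary is a triangulated torus on which the meridian has combinatorial length~$\ell$ admits a semi-simplicial triangulation extending the boundary with $O(\ell)$ tetrahedra, obtained via a layered triangulation of a meridian disk together with a standard cone-and-collar construction. By our choice of covers, the sum of the combinatorial lengths of all lifted meridians grows as~$o(d)$, so the extension across the lifted filling tori contributes only $o(d)$ tetrahedra. In view of Remark~\ref{semi-simplicial}, the resulting semi-simplicial triangulation of~$\overline M_d$ gives $\isv{\overline M_d} \leq d\cdot c_S(N) + o(d)$, and the conclusion follows.

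The main obstacle is the simultaneous control of (a)~the behavior of the covers on each cusp so as to force the lifted meridians to become long, and (b)~the existence of an efficient, semi-simplicial extension of the triangulation across these long, thin solid tori with matching boundary. This is the technical core of the Francaviglia--Frigerio--Martelli construction, and we would adapt their argument (or invoke a general residual-finiteness argument for cusp subgroups) to produce a cover tower realizing both properties at once.
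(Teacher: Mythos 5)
Your overall strategy (pass to finite covers, control the extra tetrahedra needed over the filling tori, divide by the degree) is the right one, but the combinatorial lemma you rely on is false, and this breaks the estimate. You claim that a solid torus whose boundary carries a triangulation in which the meridian has combinatorial length~$\ell$ can be triangulated, extending the boundary, with $O(\ell)$ tetrahedra. In any triangulation of a compact $3$-manifold each tetrahedron has at most four faces on the boundary, so a solid torus whose boundary triangulation has $t$ triangles needs at least $t/4$ tetrahedra. In your covers this is fatal: for the filling of~$\overline N_d$ to cover~$M$, the meridian~$m_i$ must lift homeomorphically to each lifted boundary torus (connected covers of a solid torus unwind only the core direction), so the lifted meridians have \emph{bounded} combinatorial length --- your requirement that they become long is not even achievable --- while each lifted boundary torus covering~$T_i$ with degree~$k_i$ carries $k_i t_i$ triangles. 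Summing over the $d/k_i$ lifted tori above~$T_i$, the filling solid tori force at least $\sum_i d\, t_i/4 = \Theta(d)$ additional tetrahedra, so your construction only yields $\stisv{M} \le c_S(N) + C$ for a positive constant~$C$ depending on $\partial N$ and the slopes, not the asserted bound.

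The paper avoids this by never extending a prescribed boundary triangulation into the solid tori. It works with the dual special spine~$P$ of the given triangulation of~$N$: adding \emph{two} parallel meridian discs per filling torus gives a special spine~$Q$ of~$M$, dual to a semi-simplicial triangulation, with only a bounded number $\sum_{i} v_\partial^{T_i}+v_I$ of extra vertices. Passing to a finite cover of~$M$ (residual finiteness of~$\pi_1(M)$, not of~$\pi_1(N)$) in which each~$V_i$ unwinds $n_0$~times, the lifted spine contains $n_0$~pairs of discs in each lifted solid torus, and all but one pair can be deleted while the spine stays special and dual to a semi-simplicial triangulation. The extra cost is then $O(1)$ per lifted solid torus against degree~$hn_0$, which is exactly the $o(\deg)$ saving you were after. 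If you want to rescue a triangulation-only argument, you must likewise allow the triangulation of the cover to be modified in a collar of the lifted boundary tori; as written, your extension step cannot work.
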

\begin{proof}
As pointed out in Remark~\ref{semi-simplicial} we 
estimate the integral simplicial volume
by the number of vertices of the special spine
dual to a semi-simplicial triangulation.

From the special spine $P$ dual to a semi-simplicial 
triangulation of $N$ that realizes $c_S(N)$,
we construct a special spine for $M$, and hence for its finite coverings, 
such that the associated 
triangulations are still semi-simplicial. 
We obtain $\stisv{M}\leq c_S(N)$ following step by step
the argument of Francaviglia, Frigerio, and 
Martelli~\cite[Proposition 5.15]{ffm}.

More precisely:
Let $T_1,\dots,T_k$ be the boundary tori of~$N$. For every $i\in\{1,\dots, k\}$
let $V_i$ be an open solid torus in $M\backslash P$ created 
by Dehn filling on the boundary component $T_i$.
Let $D^1_i$ and $D^2_i$ be a pair of parallel meridian discs
of $V_i$. If $D^1_i$ and $D^2_i$ are generic with respect
to the cellularization induced by $P$ on $T_i$ \cite[Lemma 5.9]{ffm},
the spine $P\cup D^1_i\cup D^2_i$ is special, dual to
a semi-simplicial triangulation, and with $v_\partial^{T_i}$ vertices added
to the ones of $P$.
Gluing a pair of parallel discs for each boundary torus $T_i$ we obtain a 
special spine $Q=P\cup \bigcup_{i=1}^k(D^1_i\cup D^2_i)$ for $M$ with 
$c_S(N)+\sum_{i=1}^k v^{T_i}_\partial+v_I$ vertices, where 
$v_I$ is the number of vertices created by 
intersections between discs added in different boundary components.

Since $\pi_1(M)$ is residually finite, for every $n>0$ there exist $n_0>n$, 
$h>0$ and a regular covering $p:\overline{M}\rightarrow M$ of degree $hn_0$
such that, for every $i\in\{1,\dots,k\}$, the preimage $p^{-1}(V_i)$ consists of $h$ open
solid tori $\overline{V}_i^1,\dots,\overline{V}^h_i$
each winding $n_0$ times along $V_i$ via $p$.
The special spine $Q$ of $M$ lifts to a special spine $\overline Q := p^{-1}(Q)$
 of $\overline{M}$. In particular, each pair of parallel discs added to $P$
lifts to $n_0$ copies of pairs of parallel discs in each open 
solid torus~$\overline{V}_i^j$. 
Removing $2n_0-2$ discs for each open solid torus in $\overline{Q}$, 
we obtain again a special spine $\overline{Q}'$ 
dual to a semi-simplicial triangulation of $\overline{M}$.

By Remark~\ref{semi-simplicial} 
we now estimate $\isv{\overline{M}}$ with the number
of vertices of  $\overline{Q}'$:
$$\stisv{M}\leq \frac{\isv{\overline{M}}}{hn_0}\leq 
c_S(N)+\frac{\sum_{i=1}^k v^{T_i}_\partial+v_I}{n_0}.$$
Since this holds for every $n>0$ and since $n_0 > n$, we get the conclusion.
\end{proof}
We can now complete the proof of Theorem~\ref{thm:hyp3stisv}:
\begin{proof}[Proof of Theorem~\ref{thm:hyp3stisv}]
Let $(M_n)_{n\in \N}$ be a family of hyperbolic $3$-manifolds obtained by Dehn 
filling on $\overline{M_{(5)}}$. 
By Thurston's Dehn filling Theorem \cite[Chapter 5, page 118]{Thu} it follows 
that $\lim_{n\rightarrow \infty}\vol(M_n)=\vol(M_{(5)})$, 
which implies $\lim_{n\rightarrow \infty}\sv{M_n}= \sv{\overline{M_{(5)}}}$
using the proportionality
principle for hyperbolic manifolds
(which holds both in the compact case~\cite{Gromov,Thu} and in 
the cusped case~\cite{fra,fripa,fujman,BBI})
Then we have:
$$1\leq\frac{\stisv{M_n}}{\sv{M_n}}\leq\frac{c_S(\overline {M_{(5)}}) }
{\sv{M_n}}\stackrel{n\rightarrow \infty}{\longrightarrow}
\frac{c_S(\overline {M_{(5)}}) }{\sv{\overline{M_{(5)}}}}=1.$$
where the second inequality follows by Proposition \ref{prop2}, 
and the last equality by Proposition \ref{prop1}.
\end{proof}

\section{Integral foliated simplicial volume}\label{sec:ifsv}

In the following, we will recall the precise definition of integral
foliated simplicial volume by Schmidt~\cite{mschmidt} and discuss
basic facts about the effect of changing parameter spaces.

\subsection{Definition of integral foliated simplicial volume}

Integral foliated simplicial volume is a version of simplicial volume
combining the rigidity of integral coefficients with the flexibility
of measure spaces. More precisely, integral foliated simplicial volume
is defined via homology with twisted coefficients in function spaces
of probability spaces that carry an action of the fundamental
group. Background on the convenient category of standard Borel spaces 
can be found in the book by Kechris~\cite{kechris}.

\begin{defi}[parametrised fundamental cycles]\label{def:pfc}
  Let $M$ be an oriented closed connected $n$-manifold with
  fundamental group~$\Gamma$ and universal covering~$\ucov M
  \longrightarrow M$. 
  \begin{itemize}
    \item A \emph{standard Borel space} is a measurable space that 
      is isomorphic to a Polish space with its Borel $\sigma$-algebra. 
      A \emph{standard Borel probability space} is a standard Borel 
      space together with a probability measure.
    \item A \emph{standard $\Gamma$-space} is a standard Borel
      probability space~$(X,\mu)$ together with a measurable
      $\mu$-preserving (left) $\Gamma$-action. If the probability 
      measure is clear from the context, we will abbreviate~$(X,\mu)$ 
      by~$X$.
    \item If $(X,\mu)$ is a standard $\Gamma$-space, then we
      equip~$\linfz {X,\mu}$ with the right $\Gamma$-action 
      \begin{align*}
        \linfz {X,\mu} \times \Gamma & \longrightarrow \linfz{X,\mu}\\
        (f,g) & \longmapsto \bigl(x \mapsto (f \cdot g)(x) := f(g \cdot x)\bigr).
      \end{align*}
      and we write $i_M^X$ for the change of coefficients homomorphism
      \begin{align*}
        i_M^X \colon C_*(M,\Z) \cong \Z \otimes_{\Z \Gamma} C_*({\ucov M},\Z)
        & \longrightarrow \linfz X \otimes_{\Z \Gamma} 
        C_*({\ucov M},\Z)\\
        1 \otimes c 
        & \longmapsto 1 \otimes c
      \end{align*}
      induced by the inclusion~$\Z \hookrightarrow \linfz X$ as
      constant functions.
    \item If $(X,\mu)$ is a standard $\Gamma$-space, then 
      \begin{align*} 
        \pfcl M  ^X := H_n(i^X_M)([M]_\Z) & \in
          H_n\bigl(M, \linfz X\bigr) \\
        &= H_n\bigl(\linfz X \otimes_{\Z \Gamma} C_*({\ucov M},\Z)
        \bigr)
        \end{align*}
        is the \emph{$X$-parametrised fundamental class of~$M$}. All
        cycles in the chain complex~$C_*(M,\linfz X) = \linfz X \otimes_{\Z\Gamma} C_*(\ucov M,\Z)$ 
        representing~$\pfcl M^X$ are called
        \emph{$X$-parametrised fundamental cycles of~$M$}.
  \end{itemize}
\end{defi}

The integral foliated simplicial volume is now defined as the infimum 
of $\ell^1$-norms over all parametrised fundamental cycles:

\begin{defi}[integral foliated simplicial volume]
  Let $M$ be an oriented closed connected $n$-manifold with 
  fundamental group~$\Gamma$, and let $(X,\mu)$ be a standard 
  $\Gamma$-space.
  \begin{itemize}
   \item Let $\sum_{j=1}^k f_j\otimes \sigma_j \in C_*\bigl(M,
     \linfz X\bigr)$ be a chain in \emph{reduced form}, i.e., the
     singular simplices~$\sigma_1, \dots, \sigma_k$ on~$\ucov M$ satisfy
     $\pi\circ \sigma_j \neq \pi\circ \sigma_\ell$ for all~$j, \ell
     \in \{1,\dots,k\}$ with $j \neq \ell$ (where $\pi \colon \ucov M
     \longrightarrow M$ is the universal covering map). Then we define
         \[    \ifsv[bigg]{\sum_{j=1}^k f_j \otimes \sigma_j}^X
            := \sum_{j=1}^k \int_X |f_j| \,d\mu \in \R_{\geq 0}.
         \]
   \item The
         \emph{$X$-parametrised simplicial volume of~$M$},
         denoted by~$\ifsv M ^X$, is the infimum of the $\ell^1$-norms
         of all $X$-parametrised fundamental cycles of~$M$.  
   \item The
         \emph{integral foliated simplicial volume of~$M$}, denoted 
         by~$\ifsv M$, is the infimum of all~$\ifsv M ^X$ over all 
         isomorphism classes of standard $\Gamma$-spaces~$X$.  
  \end{itemize}
\end{defi}

\begin{rem}
  Let $\Gamma$ be a countable group.  The class of isomorphism classes
  of standard $\Gamma$-spaces indeed forms a set~\cite[Remark~5.26]{mschmidt}.
\end{rem}

\begin{rem}
  Schmidt's original definition~\cite[Definition~5.25]{mschmidt}
  requires the actions of the fundamental group on the parameter
  spaces to be essentially free. However, allowing also parameter
  spaces with actions that are not essentially free does not change
  the infimum (Corollary~\ref{cor:infpar}).
\end{rem}

\begin{exa}[trivial parameter space]\label{exa:trivialpar}
  Let $M$ be an oriented closed connected manifold with fundamental group~$\Gamma$. 
  If $(X,\mu)$ is a standard $\Gamma$-space consisting of a single point, then 
  $\linfz X \cong \Z$ (as $\Z \Gamma$-modules with trivial $\Gamma$-action) and so
  \[ \ifsv M^X = \isv M.
  \]
  More generally, in combination with Proposition~\ref{prop:prodpar}~(2), we obtain: 
  If $(X,\mu)$ is a standard $\Gamma$-space with trivial $\Gamma$-action, then
  \[ \ifsv M^X = \isv M. 
  \]
  In particular, if $M$ is simply connected, then $\ifsv M ^X = \isv M$ for 
  all standard Borel probability spaces~$(X,\mu)$~\cite[Proposition~5.29]{mschmidt}.
\end{exa}

\begin{prop}[comparison with (integral) simplicial volume~\protect{\cite[Remark~5.23]{mschmidt}}]\label{prop:compisv}
  Let $M$ be an oriented closed connected $n$-manifold with
  fundamental group~$\Gamma$, and let $(X,\mu)$ be a standard
  $\Gamma$-space. Then
  \[ \sv M \leq \ifsv M ^X \leq \isv M. 
  \]
\end{prop}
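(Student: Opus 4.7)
The two inequalities are proved independently by mapping between the relevant chain complexes.

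For the upper bound $\ifsv M^X \leq \isv M$, the plan is to observe that the change of coefficients map $i_M^X$ preserves the $\ell^1$-norm on cycles in reduced form. Concretely, given an integral fundamental cycle $c = \sum_j a_j\, \sigma_j \in C_n(M,\Z)$ of $[M]_\Z$ with $\sigma_1,\dots,\sigma_k$ pairwise distinct, choose lifts $\widetilde{\sigma_j}$ to $\ucov M$; then $c = \sum_j a_j \otimes \widetilde{\sigma_j}$ under the identification $C_n(M,\Z) \cong \Z \otimes_{\Z\Ga} C_n(\ucov M,\Z)$ is already in reduced form. Its image $i_M^X(c) = \sum_j a_j \otimes \widetilde{\sigma_j}$, where each $a_j$ is now viewed as a constant function in $\linfz X$, is a parametrised fundamental cycle of $\ell^1$-norm $\sum_j \int_X |a_j|\,d\mu = \sum_j |a_j|$, using that $\mu$ is a probability measure. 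Taking the infimum over $c$ yields $\ifsv M^X \leq \isv M$.

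For the lower bound $\sv M \leq \ifsv M^X$, the plan is to integrate parametrised cycles against $\mu$. The integration map
\[
I \colon \linfz X \longrightarrow \R, \qquad f \longmapsto \int_X f\,d\mu
\]
is $\Ga$-equivariant (where $\R$ carries the trivial right $\Ga$-action) because the $\Ga$-action on $X$ is measure-preserving: for any $g \in \Ga$ one has $I(f\cdot g) = \int_X f(g\cdot x)\,d\mu(x) = \int_X f\,d\mu = I(f)$. Hence $I$ induces a chain map
\[
I_* \colon \linfz X \otimes_{\Z\Ga} C_*(\ucov M,\Z) \longrightarrow \R \otimes_{\Z\Ga} C_*(\ucov M,\Z) \cong C_*(M,\R),
\]
and since $I$ restricted to the constants $\Z \hookrightarrow \linfz X$ is the inclusion $\Z \hookrightarrow \R$, the composition $I_* \circ i_M^X$ is the ordinary change of coefficients, so $I_*$ sends $\pfcl M^X$ to the real fundamental class $[M]_\R$.

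Given a parametrised fundamental cycle $z = \sum_{j=1}^k f_j \otimes \sigma_j$ of $M$ in reduced form, $I_*(z) = \sum_j \bigl(\int_X f_j\,d\mu\bigr) \cdot (\pi \circ \sigma_j)$ represents $[M]_\R$, and since the $\pi \circ \sigma_j$ are pairwise distinct, the triangle inequality yields
\[
\sv M \;\leq\; \lone{I_*(z)} \;=\; \sum_{j=1}^k \biggl|\int_X f_j\,d\mu\biggr| \;\leq\; \sum_{j=1}^k \int_X |f_j|\,d\mu \;=\; \ifsv{z}^X.
\]
Taking the infimum over all $X$-parametrised fundamental cycles $z$ gives $\sv M \leq \ifsv M^X$. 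The only subtlety is the bookkeeping needed to ensure that reduced representatives are used on both sides so that the $\ell^1$-norms are computed correctly; there is no genuine obstacle.
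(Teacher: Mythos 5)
Your proof is correct and follows essentially the same route as the paper: the upper bound comes from the isometric inclusion $\Z \hookrightarrow \linfz X$ as constant functions, and the lower bound from the norm-non-increasing integration map $\linfz X \longrightarrow \R$, both of which carry fundamental cycles to fundamental cycles. The extra bookkeeping you supply (the $\Gamma$-equivariance of integration via measure preservation, and the reduced-form checks) is exactly the content the paper leaves implicit.
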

\begin{proof}
  The linear map $\linfz X \longrightarrow \R$ given by integration
  with respect to~$\mu$ maps the constant function~$1$ to~$1$ and is
  norm-non-increasing with respect to the $\ell^1$-norm on~$\linfz X$.
  From the first property, we easily deduce that the induced
  map~$C_n(M,\linfz X) \longrightarrow C_n(M,\R)$ maps
  \mbox{$X$-para}\-metrised fundamental cycles to $\R$-fundamental cycles, and
  so
  \[ \sv M \leq \ifsv M ^X. 
  \]

  The inclusion~$\Z \hookrightarrow \linfz X$ as constant
  functions is isometric with respect to the $\ell^1$-norm and the
  induced map~$C_n(M,\Z) \longrightarrow C_n(M,\linfz X)$ maps
  fundamental cycles to $X$-parametrised fundamental cycles. Hence, 
  \[ \ifsv M ^X \leq \isv M. \qedhere
  \]
\end{proof}

\begin{rem}[real coefficients]\label{rem:compisvreal}
  Arguments analogous to the ones in the proof of the previous
  proposition show that
  \[ \ifsv M^{L^\infty(X,\mu,\R)} = \sv M 
  \]
  holds for all oriented closed connected manifolds~$M$ and all 
  standard $\pi_1(M)$-spaces~$(X,\mu)$. Here, $\ifsv M^{L^\infty(X,\mu,\R)}$ 
  denotes the number defined like~$\ifsv M^X$, but using~$L^\infty(X,\mu,\R)$ 
  instead of~$\linfz{X,\mu}$.
\end{rem}

We recall two ``indecomposability'' notions for parameter spaces from
ergodic theory:

\begin{defi}[ergodic/mixing parameter spaces]\label{def:erg/mix}
  Let $\Gamma$ be a countable group. 
  \begin{itemize}
    \item A standard $\Gamma$-space~$(X,\mu)$ is \emph{ergodic} 
      if every $\Gamma$-invariant measurable subset~$A \subset X$ 
      satisfies~$\mu(A) \in \{0,1\}$ (equivalently, $\linfz {X,\mu}^\Gamma$ 
      contains only the constant functions).
    \item A standard $\Gamma$-space~$(X,\mu)$ is called \emph{mixing} if for
      all measurable subsets~$A, B \subset X$ and all
      sequences~$(g_n)_{n \in \N}$ in~$\Gamma$ with
      $\lim_{n\rightarrow \infty} g_n = \infty$ we have
      \[ \lim_{n\rightarrow \infty} \mu(A \cap g_n \cdot B) 
         = \mu(A) \cdot \mu(B).
      \]
      Here, $\lim_{n\rightarrow \infty} g_n = \infty$ means that the
      sequence~$(g_n)_{n\in\N}$ eventually leaves any finite subset
      of~$\Gamma$, i.e., that for all finite subsets~$F \subset
      \Gamma$ there is an~$N \in \N$ such that for all~$n \in \N_{\geq
        N}$ we have~$g_n \in \Gamma \setminus F$.
  \end{itemize}
\end{defi}

Clearly, for infinite discrete groups, any mixing parameter space is
also ergodic. Moreover, any countably infinite group admits an
essentially free mixing parameter space:

\begin{exa}[Bernoulli shift]\label{exa:bernoullishift}
  The \emph{Bernoulli shift} of a countable group~$\Gamma$ is the
  standard Borel space~$(\{0,1\}^\Gamma, \bigotimes_\Gamma (1/2 \cdot
  \delta_0 + 1/2 \cdot \delta_1)$, endowed with the translation
  action. If $\Gamma$ is infinite, this standard $\Gamma$-space is
  essentially free and mixing (and hence
  ergodic)~\cite[Lemma~3.37]{mschmidt}. 
\end{exa}

For ergodic parameter spaces, the parametrised fundamental class indeed 
is a generator of the corresponding top homology with twisted coefficients:

\begin{rem}
  Let $M$ be an oriented closed connected $n$-manifold with
  fundamental group~$\Gamma$, and let $(X,\mu)$ be an ergodic standard
  $\Gamma$-space. Then the inclusion~$\Z \hookrightarrow \linfz X
  ^\Gamma$ as constant functions is an isomorphism, and so the 
  change of coefficients homomorphism 
  \[ H_n(i_M^X) \colon \Z \cong H_n(M,\Z) \longrightarrow 
     H_n(M, \linfz X) \cong \linfz X ^\Gamma \cong \Z
  \]
  is an isomorphism; the isomorphism~$H_n(M,\linfz X) \cong \linfz
  X^\Gamma$ is a consequence of Poincar\'e duality with twisted
  coefficients~\cite[Theorem~2.1, p.~23]{wall}.  
\end{rem}

Furthermore, we will see that ergodic parameters suffice to describe 
the integral foliated simplicial volume (Proposition~\ref{prop:ergpar}). 

\begin{rem}[lack of functoriality]
  Ordinary simplicial volume has the following functoriality property:
  If $f \colon M \longrightarrow N$ is a continuous map betweeen
  oriented closed connected manifolds of the same dimension of
  degree~$d$, then
  \[  |d| \cdot \|N\| = \bigl\| d \cdot [N]_\R \bigr\|_1 =
  \|H_*(f,\R)([M]_\R)\|_1\leq \|M\|.
  \]
  However, when dealing with integral coefficients, the first equality
  might fail in general (because we will not be able to divide
  representatives of~$d \cdot [N]_\Z$ by~$d$). Therefore, integral
  simplicial volume, stable integral simplicial volume and integral
  foliated simplicial volume suffer from a lack of good estimates in 
  terms of mapping degrees.
\end{rem}

In the following, we will investigate some of the effects of changing
parameter spaces. To this end, we will use the following
comparison mechanism:

\begin{prop}[comparing parameter spaces]\label{prop:genprinc}
  Let $M$ be an oriented closed connected $n$-manifold with
  fundamental group~$\Gamma$, let $(X,\mu)$ and $(Y,\nu)$ be standard
  $\Gamma$-spaces, and let $\varphi \colon X \longrightarrow Y$ be a
  measurable $\Gamma$-map. Moreover, suppose that
  \begin{align} 
    \mu \bigl( \varphi^{-1}(A)\bigr) \leq \nu(A) \label{measurecompatibility}
  \end{align}
  holds for all measurable sets~$A \subset Y$.
  Then 
  \[ \ifsv M ^{X} \leq \ifsv M^Y. 
  \]
\end{prop}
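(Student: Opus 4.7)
The idea is that any measurable $\Gamma$-map $\varphi\colon X\to Y$ satisfying the measure compatibility~\eqref{measurecompatibility} induces, by precomposition, a $\Z\Gamma$-homomorphism
\[
  \Phi\colon \linfz{Y,\nu}\longrightarrow \linfz{X,\mu},\qquad f\longmapsto f\circ\varphi,
\]
which sends the constant function $1$ to the constant function $1$ and is norm-non-increasing in the sense that $\int_X|f\circ\varphi|\,d\mu \le \int_Y|f|\,d\nu$ for every $f\in\linfz{Y,\nu}$. Applying $\Phi\otimes\id_{C_*(\ucov M,\Z)}$ produces a chain map
\[
  \Phi_*\colon C_*\bigl(M,\linfz{Y,\nu}\bigr)\longrightarrow C_*\bigl(M,\linfz{X,\mu}\bigr)
\]
that, because $\Phi$ fixes the constants, satisfies $\Phi_*\circ i_M^Y = i_M^X$. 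In particular, $\Phi_*$ takes $Y$-parametrised fundamental cycles to $X$-parametrised fundamental cycles.

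First I would verify the measure estimate: the compatibility condition~\eqref{measurecompatibility} says exactly that the pushforward measure $\varphi_*\mu$ on $Y$ is dominated by $\nu$, so for every non-negative measurable $g$ on $Y$ we get $\int_X g\circ\varphi\,d\mu = \int_Y g\,d(\varphi_*\mu) \le \int_Y g\,d\nu$ by monotone approximation by simple functions. Specialising to $g=|f|$ for $f\in\linfz{Y,\nu}$ yields the norm inequality above.

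Next I would verify the chain-level norm estimate. Let $c=\sum_{j=1}^k f_j\otimes\sigma_j$ be a $Y$-parametrised fundamental cycle in reduced form, i.e., with the $\pi\circ\sigma_j$ pairwise distinct. Since $\Phi$ is applied coefficient-wise, $\Phi_*(c)=\sum_{j=1}^k (f_j\circ\varphi)\otimes\sigma_j$ is still written over the same (reduced) set of simplices, so by definition
\[
  \ifsv{\Phi_*(c)}^X \;=\; \sum_{j=1}^k \int_X |f_j\circ\varphi|\,d\mu \;\le\; \sum_{j=1}^k \int_Y |f_j|\,d\nu \;=\; \ifsv{c}^Y.
\]
Taking the infimum over all reduced $Y$-parametrised fundamental cycles of $M$ then gives $\ifsv{M}^X\le\ifsv{M}^Y$, as desired.

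The only point requiring care is that $\Phi_*(c)$ be genuinely in reduced form so that the formula for $\ifsv{\cdot}^X$ applies directly; this is immediate because the tensor representation preserves the underlying singular simplices on $\ucov M$. A minor additional check is that $\Phi$ lands in integer-valued functions, which is clear since the pointwise composition of a $\Z$-valued function with $\varphi$ remains $\Z$-valued. No other obstacles arise, since the $\Gamma$-equivariance of $\varphi$ guarantees that $\Phi$ is a morphism of right $\Z\Gamma$-modules and therefore descends to the tensor product over $\Z\Gamma$.
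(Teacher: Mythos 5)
Your proposal is correct and follows essentially the same route as the paper: the pullback chain map $\Phi = \linfz{\varphi}\otimes_{\Z\Gamma}\id$, the observation that it preserves (the classes of) constant coefficients and hence parametrised fundamental cycles, and the coefficientwise integral estimate coming from $\varphi_*\mu\le\nu$. The only point the paper makes slightly more explicit is that the measure compatibility is also what renders $\linfz{\varphi}$ well defined on $\nu$-a.e.\ equivalence classes (preimages of $\nu$-null sets are $\mu$-null), which your pushforward-domination observation already contains implicitly.
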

\begin{proof}
  We consider the (well-defined) chain map
  $\Phi := \linfz \varphi \otimes_{\Z \Gamma} \id_{C_*(\ucov M,\Z)}$:
  \begin{align*}
    \Phi
    \colon 
    \linfz Y \otimes_{\Z\Gamma} C_*(\ucov M,\Z) 
    & \longrightarrow
    \linfz X \otimes_{\Z\Gamma} C_*(\ucov M,\Z)
    \\
    f \otimes \sigma 
    & \longmapsto f \circ \varphi \otimes \sigma.
  \end{align*}
  In view of the compatibility of~$\varphi$ with 
  the measures (Equation~\eqref{measurecompatibility}), we see that
  \[ \ifsv[big]{\Phi(c)}^X \leq \ifsv c ^Y
  \]
  holds for all chains~$c \in \linfz Y \otimes_{\Z\Gamma} C_*(\ucov M,\Z)$. 
  Moreover, Equation~\eqref{measurecompatibility} shows that 
  $\varphi$ is $\nu$-almost surjective. In particular, $\linfz\varphi$ 
  maps $\nu$-almost constant functions to $\mu$-almost constant functions (with the same value).
  From this we can easily conclude that $\Phi$ maps $Y$-parametrised 
  fundamental cycles to $X$-parametrised fundamental cycles. Taking 
  the infimum over all $Y$-parametrised fundamental cycles of~$M$ thus 
  leads to
  $\ifsv M ^X \leq \ifsv M^Y$.
\end{proof}

We will now consider products, convex combinations, ergodic
decomposition and induction/restriction of parameter spaces.

\subsection{Products of parameter spaces} 

\begin{prop}[products of parameter spaces]\label{prop:prodpar}
  Let $M$ be an oriented closed connected $n$-manifold with
  fundamental group~$\Gamma$.
  \begin{enumerate}
    \item Let $I$ be a non-empty, countable (or finite) set. If 
          $(X_i, \mu_i)_{i \in I}$ is a family of standard $\Gamma$-spaces, 
          then also the product
          \[ (Z, \zeta) := \Bigl( \prod_{i \in I} X_i,  
                                  \bigotimes_{i \in I} \mu_i
                           \Bigr), \]
          equipped with the diagonal $\Gamma$-action, is a standard $\Gamma$-space, 
          and 
          \[ \ifsv M ^Z \leq \inf_{i \in I} \ifsv M ^{X_i}. 
          \]
    \item Let $(X,\mu)$ be a standard $\Gamma$-space and let $(Y,\nu)$ be
          some standard Borel probability space. Then
          \begin{align*}
            \ifsv M ^Z & = \ifsv M ^X,
          \end{align*}
          where $Z := X \times Y$ is given the $\Gamma$-action induced by
          the $\Gamma$-action on~$X$ and where $\zeta := \mu \otimes \nu$
          is the product measure on~$Z$.    
  \end{enumerate}
\end{prop}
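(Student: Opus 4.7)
For part (1), I first note that a countable product of standard Borel spaces with the product $\sigma$-algebra is again a standard Borel space, and the product measure $\zeta$ is a probability measure; the diagonal $\Gamma$-action is measurable and $\zeta$-preserving because each coordinate action is. To obtain the norm inequality I plan to invoke Proposition~\ref{prop:genprinc} once for each $j \in I$: the projection $\pi_j \colon Z \to X_j$ is $\Gamma$-equivariant with respect to the diagonal action on~$Z$ and the given action on~$X_j$, and by the very construction of the product measure the pushforward $(\pi_j)_*\zeta$ equals~$\mu_j$, so that the hypothesis $\zeta(\pi_j^{-1}(A)) = \mu_j(A)$ of Proposition~\ref{prop:genprinc} is satisfied for every measurable~$A \subset X_j$. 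Thus $\ifsv{M}^Z \leq \ifsv{M}^{X_j}$ for every~$j$, and taking the infimum yields the claim.

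For part (2), the inequality~$\ifsv{M}^Z \leq \ifsv{M}^X$ will again follow from Proposition~\ref{prop:genprinc} applied to the projection $\pi_X \colon X \times Y \to X$: since $\Gamma$ acts trivially on~$Y$, this projection is $\Gamma$-equivariant, and Fubini gives $\zeta(\pi_X^{-1}(A)) = \mu(A) \cdot \nu(Y) = \mu(A)$.

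The reverse inequality $\ifsv{M}^X \leq \ifsv{M}^Z$ does not fit directly into the framework of Proposition~\ref{prop:genprinc} (there is no measure-compatible $\Gamma$-map $X \to Z$ in general), so I would use a slicing argument. Let $c = \sum_{j=1}^k f_j \otimes \sigma_j$ be a $Z$-parametrised fundamental cycle of~$M$ in reduced form, fix measurable representatives of the $f_j \in L^\infty(Z,\Z)$, and for each $y \in Y$ define the slice
\[ c_y := \sum_{j=1}^k f_j(\cdot, y) \otimes \sigma_j \in L^\infty(X,\Z) \otimes_{\Z\Gamma} C_n(\ucov M, \Z). \]
Since $\Gamma$ acts trivially on~$Y$, slicing is $\Z\Gamma$-linear, commutes with the boundary operator, and sends $i_M^Z(c_0) = 1_Z \otimes c_0$ to $1_X \otimes c_0 = i_M^X(c_0)$. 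If $c - i_M^Z(c_0) = \partial d$ for some integral fundamental cycle~$c_0$ and chain~$d$, then by Fubini both $\partial c_y = 0$ and $c_y - i_M^X(c_0) = \partial d_y$ hold for $\nu$-almost every~$y$, so $c_y$ is an $X$-parametrised fundamental cycle for $\nu$-almost every~$y$. By Fubini applied to each summand
\[ \ifsv{c}^Z = \sum_{j=1}^k \int_Z |f_j|\, d\zeta = \int_Y \sum_{j=1}^k \int_X |f_j(x,y)|\, d\mu(x)\, d\nu(y) = \int_Y \ifsv{c_y}^X\, d\nu(y), \]
so integrating the pointwise inequality $\ifsv{M}^X \leq \ifsv{c_y}^X$ over~$Y$ yields $\ifsv{M}^X \leq \ifsv{c}^Z$, and taking the infimum over~$c$ finishes the proof.

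The main technical obstacle I foresee is the verification that the slices are genuine $X$-parametrised fundamental cycles: one must fix measurable representatives of the $f_j$ up front, and then carefully check that the relations enforced by reducing modulo~$\Z\Gamma$ in the tensor product (which in reduced form translate to equalities among the $f_j$ that hold $\zeta$-almost everywhere) pass to the slices for $\nu$-almost every~$y$. Once that measure-theoretic bookkeeping is done, the rest of the argument is a straightforward application of Fubini and Proposition~\ref{prop:genprinc}.
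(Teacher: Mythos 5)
Your proposal is correct and follows essentially the same route as the paper: part~(1) via Proposition~\ref{prop:genprinc} applied to the coordinate projections, and the nontrivial inequality in part~(2) via slicing a reduced $Z$-parametrised fundamental cycle along~$Y$ and applying Fubini (the paper picks a single good~$y$ where the student integrates the pointwise bound, which is an immaterial difference). The measure-theoretic bookkeeping you flag is also left implicit in the paper's proof, which simply notes that the slice is witnessed as a fundamental cycle by the corresponding evaluation of the bounding chain~$d$ at~$y$.
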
 
\begin{proof}
  The first part follows by applying Proposition~\ref{prop:genprinc} 
  for all~$i \in I$ to the projection~$\prod_{j \in I} X_j \longrightarrow X_i$.

  We now show the second part (following a similar argument by
  Schmidt~\cite[Proposition~5.29]{mschmidt}): We can view~$(Y,\nu)$ as
  standard $\Gamma$-space with trivial $\Gamma$-action. Then we obtain
  \[ \ifsv M ^Z \leq \ifsv M^X 
  \]
  from the first part. For the converse inequality, we consider a $Z$-parametrised 
  fundamental cycle~$c = \sum_{j=0}^k f_j \otimes \sigma_j \in \linfz Z
  \otimes_{\Z \Gamma} C_n({\ucov M},\Z)$ in reduced form. So, if $c_\Z \in \Z \otimes_{\Z
  \Gamma} C_n({\ucov M},\Z)$ is a fundamental cycle of~$M$, there
  is a chain~$d \in \linfz Z \otimes_{\Z \Gamma} C_{n+1}({\ucov
  M},\Z)$ such that
  \[ c - c_\Z = \partial d 
     \in \linfz Z \otimes_{\Z \Gamma} C_n({\ucov M},\Z). 
  \]
  Therefore, for $\nu$-almost all~$y \in Y$, the chain
  \[ c_y := \sum_{j=0}^k \bigl( x \mapsto f_j(x,y)
                         \bigr)\otimes \sigma_j 
     \in \linfz X \otimes_{\Z \Gamma} C_n({\ucov M},\Z) 
  \]
  is well-defined ($\Gamma$ acts trivially on~$Y$) and an
  $X$-parametrised fundamental cycle in reduced form (witnessed by the
  corresponding evaluation of~$d$ at~$y$).  By Fubini's theorem,
  \begin{align*}
        \ifsv c ^Z 
    & = \int_{X \times Y} \sum_{j=0}^k |f_j| \,d(\mu \otimes \nu)
      = \int_Y \int_X \sum_{j=0}^k |f_j(x,y)| \,d\mu(x) \,d\nu(y)\\
    & = \int_Y \ifsv {c_y}^X \,d\nu(y).
  \end{align*}
  Hence, there is a~$y \in Y$ such that $c_y$ is an $X$-parametrised fundamental cycle 
  and 
  $\ifsv{c_y}^X \leq \ifsv{c}^Z$.
  Taking the
  infimum over all $Z$-parametrised fundamental cycles~$c$ shows
  $\ifsv M ^X \leq \ifsv M ^Z$, as desired.
\end{proof}

Taking products of parameter spaces hence shows that the infimum in the 
definition of integral foliated simplicial volume is a \emph{minimum}:

\begin{cor}\label{cor:infpar}
  Let $M$ be an oriented closed connected manifold with fundamental
  group~$\Gamma$. Then there exists a standard
  $\Gamma$-space~$(X,\mu)$ with essentially free $\Gamma$-action satisfying
  \[ \ifsv M = \ifsv M ^X.
  \]
\end{cor}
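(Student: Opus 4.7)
The plan is to take the infimum appearing in the definition of $\ifsv M$, realise it as a minimum on a countable product of parameter spaces, and then upgrade that product to an essentially free parameter space by multiplying with a fixed essentially free one. Concretely, by definition of $\ifsv M$ as an infimum, I can pick a sequence $(X_n,\mu_n)_{n\in\N}$ of standard $\Gamma$-spaces with
\[ \lim_{n\to\infty} \ifsv M^{X_n} = \ifsv M.\]

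Next I would fix a standard $\Gamma$-space~$(B,\beta)$ whose $\Gamma$-action is essentially free: if $\Gamma$ is infinite, take $B$ to be the Bernoulli shift of Example~\ref{exa:bernoullishift}; if $\Gamma$ is finite, take $B = \Gamma$ with the normalised counting measure and the left translation action (which is even free). Then I would form
\[ (X,\mu) := \Bigl(B \times \prod_{n \in \N} X_n,\; \beta \otimes \bigotimes_{n\in \N} \mu_n \Bigr)\]
with the diagonal $\Gamma$-action. A countable product of standard Borel probability spaces is again a standard Borel probability space, and the diagonal action is measurable and measure-preserving, so $(X,\mu)$ is a standard $\Gamma$-space. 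Its action is essentially free: for every $g \in \Gamma\setminus\{e\}$, the fixed point set of $g$ in $X$ is contained in $\{b \in B : g\cdot b = b\}\times \prod_{n}X_n$, which has measure zero since $\Gamma \curvearrowright B$ is essentially free.

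It remains to compare the volumes. Applying Proposition~\ref{prop:prodpar}~(1) to the family $\{B\}\cup\{X_n : n \in \N\}$ yields
\[ \ifsv M^{X} \;\leq\; \inf_{n\in\N} \ifsv M^{X_n} \;=\; \ifsv M.\]
The reverse inequality $\ifsv M \leq \ifsv M^X$ is immediate from the definition of~$\ifsv M$ as an infimum over all standard $\Gamma$-spaces. Combining these gives $\ifsv M = \ifsv M^X$, which is the desired conclusion.

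The argument is essentially a one-line application of Proposition~\ref{prop:prodpar}~(1) together with the existence of an essentially free parameter space, so there is no real technical obstacle; the only point that deserves care is verifying that the countable product $(X,\mu)$ really is a standard Borel probability space on which the diagonal $\Gamma$-action is measurable, measure-preserving, and essentially free, so that it is legitimately admissible as a parameter space in the definition of~$\ifsv M$.
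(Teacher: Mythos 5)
Your proposal is correct and follows essentially the same route as the paper: the paper likewise takes the countable product of a near-optimal sequence of parameter spaces together with an essentially free factor (the Bernoulli shift, or $\Gamma$ with normalised counting measure when $\Gamma$ is finite) and applies Proposition~\ref{prop:prodpar}. The only difference is cosmetic --- the paper absorbs the essentially free space as the index-$0$ factor of the product rather than writing it as a separate factor $B$.
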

\begin{proof}
  Let $(X_0, \mu_0)$ be a standard $\Gamma$-space with essentially
  free $\Gamma$-action, e.g., the Bernoulli shift of~$\Gamma$
  (Example~\ref{exa:bernoullishift}) (or, in the case of
  finite~$\Gamma$ just $\Gamma$ with the normalised counting measure).
  For $n\in \N_{>0}$ let $(X_n, \mu_n)$ be a standard $\Gamma$-space
  with
  \[ \ifsv M ^{X_n} \leq \ifsv M + \frac 1n. 
  \]
  Then the diagonal $\Gamma$-action on~$(X,\mu) := \bigl(\prod_{n \in
    \N} X_n, \bigotimes_{n \in \N} \mu_n\bigr)$ is essentially free
  and we obtain $\ifsv M ^X = \ifsv M$ from Proposition~\ref{prop:prodpar}.
\end{proof}


\subsection{Convex combinations of parameter spaces}

\begin{prop}[convex combinations of parameter spaces]\label{prop:convcombpar}
  Let $M$ be an oriented closed connected $n$-manifold with
  fundamental group~$\Gamma$, let $(X,\mu)$ and $(Y,\nu)$ be two
  standard $\Gamma$-spaces, and let $t \in [0,1]$. Then
  \[   \ifsv M ^Z
     = t \cdot \ifsv M ^X + (1-t) \cdot \ifsv M ^Y, 
  \]
  where $Z := X \sqcup Y$ is the disjoint union of~$X$ and~$Y$ endowed with the
  obvious $\Gamma$-action and the probability measure~$\zeta := t \cdot
  \mu \sqcup (1-t) \cdot \nu$. 
\end{prop}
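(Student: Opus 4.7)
The plan is to exploit the natural decomposition of function spaces induced by the disjoint union structure of $Z = X \sqcup Y$.

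First I would observe that since $Z = X \sqcup Y$ is a disjoint union of standard Borel spaces with the measure $\zeta = t\cdot\mu \sqcup (1-t)\cdot\nu$, restriction of functions gives a $\Z\Gamma$-module isomorphism $\linfz{Z} \cong \linfz{X} \oplus \linfz{Y}$ (the $\Gamma$-action is preserved since the $\Gamma$-action on $Z$ respects the decomposition into $X$ and $Y$). Tensoring over $\Z\Ga$ with $C_*(\ucov M,\Z)$ yields a chain-complex decomposition
\[ C_*(M,\linfz Z) \;\cong\; C_*(M,\linfz X) \,\oplus\, C_*(M,\linfz Y). \]
Under this isomorphism the constant function $1 \in \linfz{Z}$ corresponds to the pair $(1,1)$, so the change-of-coefficients map satisfies $i_M^Z = i_M^X \oplus i_M^Y$ in the evident sense; consequently $\pfcl M ^Z$ corresponds to $\bigl(\pfcl M^X,\pfcl M^Y\bigr)$ in $H_n(M,\linfz X)\oplus H_n(M,\linfz Y)$.

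Next I would verify the norm splits additively with weights $t$ and $1-t$. If $c = \sum_{j=1}^k f_j\otimes \sigma_j \in C_n(M,\linfz Z)$ is in reduced form, write $c = c_X + c_Y$ with $c_X := \sum_j (f_j|_X)\otimes\sigma_j$ and $c_Y := \sum_j (f_j|_Y)\otimes \sigma_j$; note that reduced form is preserved since the condition only concerns the simplices~$\sigma_j$. A direct computation using $\zeta = t\cdot\mu \sqcup (1-t)\cdot\nu$ gives
\[ \ifsv{c}^Z \;=\; \sum_{j=1}^k \int_Z |f_j|\,d\zeta
   \;=\; t\cdot \ifsv{c_X}^X + (1-t)\cdot \ifsv{c_Y}^Y.
\]

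Finally I would conclude both inequalities. For the lower bound, any $Z$-parametrised fundamental cycle $c$ decomposes as $c = c_X + c_Y$; since the cycle and boundary conditions split and $\pfcl M^Z$ corresponds to $(\pfcl M^X,\pfcl M^Y)$, each summand $c_X$, $c_Y$ is itself a parametrised fundamental cycle for $X$, respectively $Y$. Thus the norm identity above gives $\ifsv c^Z \geq t\cdot \ifsv M^X + (1-t)\cdot \ifsv M^Y$, and taking the infimum yields one direction. For the upper bound, given $\varepsilon$-almost optimal parametrised fundamental cycles $c_X$ and $c_Y$ for $X$ and $Y$, their sum (viewing $\linfz X$, $\linfz Y$ as subspaces of $\linfz Z$ via extension by zero) is a $Z$-parametrised fundamental cycle of norm $t\cdot\ifsv{c_X}^X + (1-t)\cdot\ifsv{c_Y}^Y$, which gives the other direction.

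There is no serious obstacle here: the statement is essentially a direct consequence of the additivity of $\linfz{\,\cdot\,}$ and of the $\ell^1$-norm under disjoint unions. The only points needing a little care are checking that reduced form is inherited by the two summands and handling the degenerate cases $t\in\{0,1\}$, where one of the components becomes $\zeta$-null and the corresponding summand of $\linfz Z$ collapses --- but in those cases the desired equality reduces to $\ifsv M^Z = \ifsv M^Y$ (resp.~$\ifsv M^X$), which follows from the same restriction argument.
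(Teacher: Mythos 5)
Your proposal is correct and follows essentially the same route as the paper: both decompose $\linfz{Z,\zeta}\cong\linfz{X,\mu}\oplus\linfz{Y,\nu}$ via restriction, observe that the constant function~$1$ corresponds to~$(1,1)$ so that $Z$-parametrised fundamental cycles correspond to pairs of $X$- and $Y$-parametrised fundamental cycles, and split the integral with weights~$t$ and~$1-t$ to obtain both inequalities. Your explicit remark on the degenerate cases $t\in\{0,1\}$ (where the restriction map to the null component is not well defined) is a small point the paper leaves implicit, but it does not change the argument.
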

\begin{proof}
  Under the mutually inverse $\Z \Gamma$-isomorphisms
  \begin{align*}
    \linfz {Z,\zeta} & \longleftrightarrow \linfz {X,\mu} \oplus \linfz {Y,\nu}\\
    f & \longmapsto (f|_X, f|_Y) \\
    \chi_X \cdot f + \chi_Y \cdot g & \longmapsfrom (f,g)
  \end{align*}
  the constant function~$1$ on~$Z$ corresponds to~$(1,1)$, and for 
  all~$f \in \linfz {Z,\zeta}$ we have
  \[ \int_Z f \,d\zeta = t \cdot \int_X f|_X \,d\mu + (1-t) \cdot \int_Y f|_Y \,d\nu. 
  \]
  Therefore, the same arguments as in Proposition~\ref{prop:genprinc}
  show that under the induced mutually inverse chain isomorphisms
  \begin{align*} 
    \linfz{Z,\zeta} \otimes_{\Z \Gamma} C_n(\ucov M,\Z)
     \longleftrightarrow &
     \phantom{\oplus}\ \linfz{X,\mu} \otimes_{\Z \Gamma} C_n(\ucov M,\Z)
     \\& \oplus
     \linfz{Y,\nu} \otimes_{\Z \Gamma} C_n(\ucov M,\Z)
  \end{align*}
  $Z$-parametrised fundamental cycles correspond to pairs of
  $X$-parametrised and $Y$-parametrised fundamental cycles and that 
  (by applying the arguments in both directions)
  \begin{align*}  
     \ifsv M ^Z
     & \geq t \cdot \ifsv M ^X + (1-t) \cdot \ifsv M ^Y, \quad\text{and}\\
     \ifsv M ^Z
     & \leq t \cdot \ifsv M ^X + (1-t) \cdot \ifsv M ^Y. \qedhere 
  \end{align*}
\end{proof}

In combination with Example~\ref{exa:trivialpar} we obtain:

\begin{cor}
  Let $M$ be an oriented closed connected manifold with fundamental
  group~$\Gamma$. Then 
  \[ \bigl\{ \ifsv M ^X \bigm| \text{$(X,\mu)$ is a standard $\Gamma$-space} \bigr\} 
     = \bigl[ \ifsv M, \isv M \bigr] \subset \R.
  \]
\end{cor}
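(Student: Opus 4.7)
The plan is to establish the two set-theoretic inclusions separately; the left-to-right one is essentially immediate and the right-to-left one is achieved by interpolating between two extreme parameter spaces via Proposition~\ref{prop:convcombpar}.

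For the inclusion ``$\subseteq$'', I would fix an arbitrary standard $\Gamma$-space~$(X,\mu)$. By Proposition~\ref{prop:compisv} we have $\ifsv M^X \leq \isv M$, and by definition of~$\ifsv M$ as an infimum we have $\ifsv M \leq \ifsv M^X$. Hence $\ifsv M^X \in [\ifsv M, \isv M]$, which is what the inclusion asserts.

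For the inclusion ``$\supseteq$'', the strategy is to realise both endpoints by explicit standard $\Gamma$-spaces and then slide continuously between them by disjoint union. First, Corollary~\ref{cor:infpar} supplies a standard $\Gamma$-space~$(X_0,\mu_0)$ with $\ifsv M^{X_0} = \ifsv M$. Second, take $(Y_0,\nu_0)$ to be the one-point standard Borel probability space (with the necessarily trivial $\Gamma$-action); by Example~\ref{exa:trivialpar} we have $\ifsv M^{Y_0} = \isv M$. Given any value~$v \in [\ifsv M, \isv M]$, the case $\ifsv M = \isv M$ is trivial, so I may assume $\ifsv M < \isv M$ and set
\[ t := \frac{\isv M - v}{\isv M - \ifsv M} \in [0,1], \]
so that $v = t \cdot \ifsv M + (1-t) \cdot \isv M$. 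Applying Proposition~\ref{prop:convcombpar} to the disjoint union~$Z := X_0 \sqcup Y_0$ equipped with the probability measure~$\zeta := t \cdot \mu_0 \sqcup (1-t) \cdot \nu_0$ then yields
\[ \ifsv M^Z = t \cdot \ifsv M^{X_0} + (1-t) \cdot \ifsv M^{Y_0} = t \cdot \ifsv M + (1-t) \cdot \isv M = v, \]
so $v$ lies in the set on the left-hand side.

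There is no real obstacle here: the proof is a direct assembly of already-established facts (Proposition~\ref{prop:compisv}, Corollary~\ref{cor:infpar}, Example~\ref{exa:trivialpar}, and Proposition~\ref{prop:convcombpar}). The only point requiring a moment's care is the degenerate case $t \in \{0,1\}$, where one of the summands in the disjoint union carries zero measure; the formula of Proposition~\ref{prop:convcombpar} still holds with the standard convention that a null summand contributes nothing, recovering $\ifsv M^Z = \ifsv M^{X_0}$ or $\ifsv M^Z = \ifsv M^{Y_0}$ as appropriate.
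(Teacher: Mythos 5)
Your proof is correct and follows exactly the route the paper intends: the paper derives this corollary by combining Proposition~\ref{prop:convcombpar} with Example~\ref{exa:trivialpar} (and implicitly Corollary~\ref{cor:infpar} and Proposition~\ref{prop:compisv}), leaving the interpolation argument to the reader, which is precisely what you have spelled out.
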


\subsection{Ergodic decomposition of parameter spaces}

We will now show that ergodic parameter spaces suffice to describe 
the integral foliated simplicial volume:

\begin{prop}[ergodic parameters suffice]
  \label{prop:ergpar}
  Let $M$ be an oriented closed connected manifold with fundamental
  group~$\Gamma$. 
  \begin{enumerate}
    \item If $(X,\mu)$ is a standard \mbox{$\Gamma$-space} and $\varepsilon \in \R_{>0}$, 
      then there is a \mbox{$\Gamma$-in}\-variant ergodic probability measure~$\mu'$ on 
      the measurable $\Gamma$-space~$X$ with
      \[ \ifsv M^{(X,\mu')} \leq \ifsv M^{(X,\mu)} + \varepsilon. 
      \]
    \item In particular: For every~$\varepsilon \in \R_{>0}$ there is 
      an ergodic standard $\Gamma$-space~$X$ with
      \[ \ifsv M ^X \leq \ifsv M + \varepsilon. 
      \]
  \end{enumerate}
\end{prop}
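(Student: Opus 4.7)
The plan is to invoke the ergodic decomposition theorem for measure-preserving actions of countable groups on standard Borel probability spaces, and to average the norms of parametrised fundamental cycles over the ergodic components via Fubini's theorem. Part~(2) is an immediate consequence of part~(1): given $\varepsilon \in \R_{>0}$, I would first pick any standard $\Gamma$-space~$(X,\mu)$ with $\ifsv M^{(X,\mu)} \leq \ifsv M + \varepsilon/2$ and then apply part~(1) with threshold~$\varepsilon/2$; the resulting $(X,\mu')$ is then an ergodic standard $\Gamma$-space satisfying $\ifsv M^{(X,\mu')} \leq \ifsv M + \varepsilon$.

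For part~(1), the first step is to apply ergodic decomposition to the countable $\Gamma$-action on~$(X,\mu)$: there exists a Borel map $X \to \mathrm{Prob}(X)$, $x \mapsto \mu_x$, into the space of $\Gamma$-invariant probability measures on~$X$ such that $\mu_x$ is ergodic for $\mu$-almost every~$x$ and
\[
  \mu = \int_X \mu_x \, d\mu(x).
\]
Because $\mu(A) = 0$ forces $\mu_x(A)=0$ for $\mu$-a.e.~$x$, restriction descends to a well-defined $\Z\Gamma$-homomorphism $\linfz{X,\mu} \to \linfz{X,\mu_x}$ for $\mu$-a.e.~$x$, and this homomorphism fixes the constant function~$1$.

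Next, I would pick an $(X,\mu)$-parametrised fundamental cycle $c = \sum_{j=1}^{k} f_j \otimes \sigma_j$ in reduced form with $\ifsv{c}^{(X,\mu)} \leq \ifsv M^{(X,\mu)} + \varepsilon$. Restricting each $f_j$ yields a chain $c_x := \sum_{j=1}^{k} (f_j|_{\mu_x}) \otimes \sigma_j \in \linfz{X,\mu_x} \otimes_{\Z \Gamma} C_n(\ucov M,\Z)$, still in reduced form (the $\sigma_j$'s are unchanged). Since restriction is compatible with the change of coefficients inclusions $\Z \hookrightarrow \linfz{X,\mu} \to \linfz{X,\mu_x}$, the chain $c_x$ is an $(X,\mu_x)$-parametrised fundamental cycle for $\mu$-a.e.~$x$. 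Fubini's theorem applied to the decomposition $\mu = \int_X \mu_x \, d\mu(x)$ then gives
\[
  \ifsv{c}^{(X,\mu)}
  = \int_X \sum_{j=1}^{k} |f_j| \, d\mu
  = \int_X \ifsv{c_x}^{(X,\mu_x)} \, d\mu(x),
\]
so some $x \in X$ satisfies $\ifsv{c_x}^{(X,\mu_x)} \leq \ifsv{c}^{(X,\mu)} \leq \ifsv M^{(X,\mu)} + \varepsilon$, and $\mu' := \mu_x$ is the desired ergodic measure.

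The main obstacle is the measure-theoretic bookkeeping needed to make this rigorous at the level of $L^\infty$-equivalence classes: one must verify that the restriction $\linfz{X,\mu} \to \linfz{X,\mu_x}$ is well-defined on equivalence classes for $\mu$-a.e.~$x$, and that $x \mapsto \ifsv{c_x}^{(X,\mu_x)}$ is genuinely $\mu$-measurable, so that Fubini applies. Both issues are standard consequences of the ergodic decomposition theorem (as in Varadarajan or Kechris), and once they are granted the remaining computation parallels the Fubini argument already used in the proof of Proposition~\ref{prop:prodpar}~(2).
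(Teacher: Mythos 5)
Your overall strategy --- ergodic decomposition followed by a Fubini/averaging argument to locate a good ergodic component --- is exactly the one the paper uses, and your reduction of part~(2) to part~(1) and the final averaging computation are fine. The genuine problem is the step where you claim that ``restriction descends to a well-defined $\Z\Gamma$-homomorphism $\linfz{X,\mu} \to \linfz{X,\mu_x}$ for $\mu$-a.e.~$x$.'' This is false in general, and it is not a standard consequence of the ergodic decomposition theorem. For each \emph{fixed} $\mu$-null set~$A$ one indeed has $\mu_x(A)=0$ for a.e.~$x$, but the exceptional set depends on~$A$ and there are uncountably many null sets; typically the ergodic components are mutually singular and singular with respect to~$\mu$ (e.g.\ for the product of an ergodic action with a trivial action on a nonatomic space, every component lives on a $\mu$-null slice), so a $\mu$-equivalence class of functions has no well-defined $\mu_x$-equivalence class. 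As stated, your chain~$c_x$ and the restricted boundary relation are therefore not defined.

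The repair --- which is precisely what the paper's proof supplies --- is to abandon equivalence classes and work in the strict complex $B(X,\Z)\otimes_{\Z\Gamma}C_*(\ucov M,\Z)$ of Remark~\ref{rem:strictch}: choose honest Borel representatives for the finitely many coefficients of~$c$ and of a bounding chain~$d$, so that the relation $c-c_\Z=\partial d$ holds only up to a correction term $\chi_A\cdot c'$ supported on a single ($\Gamma$-invariant) $\mu$-null set~$A$. Because only this one null set and finitely many integrals are then involved, one can pick an ergodic component $\mu_p$ with $\mu_p(A)=0$ and $\int_X\sum_j|f_j|\,d\mu_p\le\int_X\sum_j|f_j|\,d\mu$ simultaneously, which makes the reduced class of~$c$ a genuine $(X,\mu_p)$-parametrised fundamental cycle. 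So your argument is salvageable, but the point you defer as ``standard measure-theoretic bookkeeping'' is exactly the nontrivial content of the proof, and in the form you stated it (a single a.e.-defined restriction homomorphism on all of $\linfz{X,\mu}$) it is not true.
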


The proof of this proposition relies on the ergodic decomposition 
theorem:

\begin{thm}[ergodic decomposition~\protect{\cite[Theorem~5]{farrell}\cite[Theorem~4.2]{varadarajan}}]
  \label{thm:ergdec}
  Let $\Gamma$ be a countable group and let $(X,\mu)$ be a standard
  $\Gamma$-space.  Then there is a probability space~$(P,\nu)$ and a
  familiy~$(\mu_p)_{p\in P}$ of $\Gamma$-ergodic probability measures
  on the measurable $\Gamma$-space~$X$ with the following property: 
  For each Borel subset~$A \subset X$, the function
  \begin{align*}
    P & \longrightarrow [0,1] \\
    p & \longmapsto \mu_p(A)
  \end{align*}
  is measurable and
  \[ \mu(A) = \int_{P} \mu_p(A) \,d\nu(p).
  \]
\end{thm}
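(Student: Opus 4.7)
The plan is to prove this by constructing the family $(\mu_p)_{p \in P}$ as regular conditional probabilities with respect to the $\sigma$-algebra of $\Gamma$-invariant Borel sets. Since $(X,\mu)$ is a standard Borel probability space and $\Gamma$ is countable, this approach yields the desired ergodic components explicitly, and the natural candidate for the parameter space is $(P,\nu) := (X,\mu)$ itself.

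First, let $\mathcal{I} \subset \mathcal{B}(X)$ denote the sub-$\sigma$-algebra of $\Gamma$-invariant Borel subsets of $X$. Since the underlying space is standard, a classical theorem of Rokhlin/Jirina yields a measurable kernel $X \ni x \mapsto \mu_x$ of probability measures on $X$ such that for every Borel set $A \subset X$ the function $x \mapsto \mu_x(A)$ is $\mathcal{I}$-measurable and
\[
  \mu(A \cap B) \;=\; \int_B \mu_x(A) \, d\mu(x) \qquad \text{for all } B \in \mathcal{I}.
\]
Taking $B = X$ gives the advertised integral decomposition $\mu(A) = \int_P \mu_p(A) \, d\nu(p)$, and the measurability of $p \mapsto \mu_p(A)$ is built into the construction of the kernel.

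Next I would check that $\mu_x$ is $\Gamma$-invariant for $\mu$-almost every $x$. Fix $\gamma \in \Gamma$ and $A$ in a countable algebra $\mathcal{A}$ generating $\mathcal{B}(X)$. Since every $B \in \mathcal{I}$ is $\gamma$-invariant and $\mu$ is $\Gamma$-invariant,
\[
  \int_B \mu_x(\gamma^{-1}A) \, d\mu(x) \;=\; \mu(\gamma^{-1}A \cap B) \;=\; \mu(A \cap B) \;=\; \int_B \mu_x(A) \, d\mu(x),
\]
so $\mu_x(\gamma^{-1}A) = \mu_x(A)$ off a $\mu$-null set depending on $(\gamma,A)$. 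Taking a countable union over $\Gamma \times \mathcal{A}$ and using a monotone class argument produces a single $\mu$-conull set on which $\gamma_* \mu_x = \mu_x$ simultaneously for all $\gamma \in \Gamma$.

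The main obstacle is ergodicity: for a fixed $A \in \mathcal{I}$, the defining property of the kernel gives $\mu_x(A) = \chi_A(x) \in \{0,1\}$ for $\mu$-a.e.\ $x$, but $\mathcal{I}$ need not be countably generated, so one cannot sweep a single null set uniformly across all invariant sets. The standard fix is to test against functions rather than sets: for every bounded Borel $f$, one has $\int f \, d\mu_x = \mathbb{E}[f \mid \mathcal{I}](x)$ for $\mu$-a.e.\ $x$, so if $f$ is $\Gamma$-invariant then $\int f \, d\mu_x = f(x)$ and $\int f^2 \, d\mu_x = f(x)^2$, forcing $f$ to be $\mu_x$-a.s.\ constant. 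On a standard Borel $\Gamma$-space with countable $\Gamma$, the invariant sub-$\sigma$-algebra is countably generated modulo $\mu$-null sets (which one can see by applying the ergodic averaging argument above to a countable dense family of $\Gamma$-invariant bounded Borel functions obtained from $\mathcal{A}$ and $\Gamma$); applying the above to such a countable family gives a single $\mu$-conull set $X_0 \subset X$ on which $\mu_x$ is both $\Gamma$-invariant and ergodic, and restricting $\nu$ to $X_0$ (which changes nothing since $\mu(X_0) = 1$) completes the proof.
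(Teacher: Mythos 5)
Your first two steps are correct and standard: on a standard Borel probability space the regular conditional probabilities $x \mapsto \mu_x$ with respect to the invariant sub-$\sigma$-algebra $\mathcal{I}$ exist, the integral decomposition follows by taking $B = X$, and your argument for almost-everywhere $\Gamma$-invariance of the $\mu_x$ (fixing $\gamma$ and $A$ in a countable generating algebra, then a monotone class argument) is fine. The gap is in the ergodicity step, and it is exactly the quantifier problem you yourself flagged, not a resolution of it. Countable generation of $\mathcal{I}$ modulo $\mu$-null sets gives, for each invariant Borel set $B$, a set $B' \in \mathcal{I}_0 := \sigma(\{A_n\})$ with $\mu(B \mathbin{\triangle} B') = 0$; and on your conull set $X_0$ you do know $\mu_x(B') \in \{0,1\}$ for every $B' \in \mathcal{I}_0$ (the class of sets on which a fixed measure takes only the values $0$ and $1$ is a $\sigma$-algebra, so this part is fine). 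But to conclude $\mu_x(B) \in \{0,1\}$ you would need $\mu_x(B \mathbin{\triangle} B') = 0$, whereas all you know is $\mu(B \mathbin{\triangle} B') = 0$. The ergodic components $\mu_x$ are in general \emph{singular} with respect to $\mu$, so $\mu$-null sets need not be $\mu_x$-null; the disintegration only yields $\mu_x(B \mathbin{\triangle} B') = 0$ for $\mu$-a.e.\ $x$ with the exceptional set depending on $B$ --- precisely the uncountable sweep you set out to avoid. Moreover, the two properties your final deduction actually uses (invariance, plus $\{0,1\}$-values on a countable subalgebra generating $\mathcal{I}$ mod $\mu$-null sets) genuinely do not imply ergodicity: take $X = \{0,1\}^{\Z} \times [0,1]$ with $\Gamma = \Z$ acting by the shift on the first factor and trivially on the second, $\mu = \beta_{1/2} \otimes \mathrm{Leb}$, and $\mathcal{I}_0$ generated by the sets $\{0,1\}^{\Z} \times C$; then $\nu := \tfrac12\bigl(\beta_{1/2} + \beta_{1/3}\bigr) \otimes \delta_{0}$ is $\Gamma$-invariant and $\{0,1\}$-valued on $\mathcal{I}_0$, yet it gives measure $\tfrac12$ to the invariant Borel set $D \times [0,1]$, where $D$ is the set of sequences whose Ces\`aro averages converge to $\tfrac12$. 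So the implication you need fails pointwise, and the proof is incomplete at its crucial step.

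Two further remarks. First, your parenthetical justification that $\mathcal{I}$ is countably generated modulo $\mu$-null sets appeals to ``the ergodic averaging argument above'', but no averaging argument appears in your text, and for a general (possibly non-amenable) countable $\Gamma$ there is no pointwise ergodic theorem to average with; the correct reason is separability of the measure algebra of $(X,\mu)$, i.e.\ of $L^1(X,\mu)$. That part is repairable. Second, the missing ergodicity step is not repairable by these elementary means: this is exactly why the classical proofs --- including those of Farrell and Varadarajan, which the paper cites in place of giving any proof of Theorem~\ref{thm:ergdec} --- proceed differently, typically by passing to a compact metric model of the $\Gamma$-space, identifying ergodic measures with the extreme points of the compact convex metrizable set of invariant measures, and invoking a Choquet-type representation theorem (for $\Z$- or amenable actions one can instead use the pointwise ergodic theorem). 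The heart of Farrell's and Varadarajan's work is precisely the step that is absent from your argument.
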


In view of this theorem all standard $\Gamma$-spaces can be seen as an
assembly of ergodic $\Gamma$-spaces. However, we have to be careful about 
the sets of measure~$0$ with respect to the involved measures. Therefore, 
we consider the following ``strict'' function spaces and chain complexes:

\begin{defi}
  Let $X$ be a measurable space. We write $B(X,\Z)$ for the set of bounded
  measurable functions of type~$X \longrightarrow \Z$. If $\mu$ is a
  measure on~$X$, we write
  \[ N(X,\mu,\Z) := \bigl\{ f \in B(X,\Z)
                    \bigm | \mu \bigl( f^{-1}( \Z \setminus 0) \bigr) = 0
                    \bigr\} \]
  for the set of all functions vanishing $\mu$-almost everywhere. 
\end{defi}

\begin{rem}\label{rem:strictch}
  Let $M$ be an oriented closed connected manifold with fundamental
  group~$\Gamma$ and let $(X,\mu)$ be a standard $\Gamma$-space. By
  definition,
  \[ \linfz{(X,\mu)} \cong B(X,\Z) / N(X,\mu,\Z), \]
  and this isomorphism of $\Z \Gamma$-modules gives rise to an isomorphism 
  \[ \linfz {(X,\mu)} \otimes_{\Z \Gamma} C_*(\ucov M,\Z) 
     \cong 
     \frac{B(X,\Z) \otimes_{\Z \Gamma} C_*({\ucov M},\Z)}%
          {N(X,\mu,\Z) \otimes_{\Z \Gamma} C_*({\ucov M},\Z)}
  \]
  of chain complexes (because the chain modules of~$C_*(\ucov M,\Z)$ are free, and
  hence flat, over~$\Z \Gamma$). 
\end{rem}

\begin{proof}[Proof of Propostion~\ref{prop:ergpar}]
  It suffices to prove the first part. Let $(X,\mu)$ be a standard
  $\Gamma$-space, let $\varepsilon \in \R_{>0}$, and let $n := \dim
  M$. Then there is an $(X,\mu)$- parametrised fundamental cycle~$c = \sum_{j=0}^k
  f_j \otimes \sigma_j \in \linfz{(X,\mu)} \otimes_{\Z \Gamma} C_n ({\ucov
    M},\Z)$ with
  \[ \sum_{j=0}^k \int_X |f_j|\,d\mu \leq \ifsv M^{(X,\mu)} + \varepsilon. \] 
  Let $c_\Z \in \Z \otimes_{\Z \Gamma} C_n ({\ucov M},\Z)$ be an integral 
  fundamental cycle of~$M$. Because $c$ is an $(X,\mu)$-parametrised
  fundamental cycle of~$M$, we can find a chain $d \in \linfz{(X,\mu)}
  \otimes_{\Z \Gamma} C_{n+1}({\ucov M},\Z)$ satisfying
  \[ c - c_\Z = \partial d 
     \in \linfz{(X,\mu)} \otimes_{\Z \Gamma} C_n({\ucov M},\Z). \] 
  In view of Remark~\ref{rem:strictch}, we can assume that the
  coefficients~$f_0, \dots, f_k$ of~$c$ and those of~$d$ lie in the
  ``strict'' function space~$B(X,\Z)$ and that there is a (without
  loss of generality, $\Gamma$-invariant) $\mu$-null set~$A \subset X$
  and a chain~$c' \in B(X,\Z) \otimes_{\Z \Gamma} C_n ({\ucov M}, \Z)$
  satisfying the relation
  \[ c - c_\Z = \partial d + \chi_A \cdot c'
     \in B(X,\Z) \otimes_{\Z \Gamma} C_n ({\ucov M},\Z). \]
  Here, $\chi_A \cdot c'$ uses the canonical $B(X,\Z)^\Gamma$-$\Z\Gamma$-bimodule 
  structure on~$B(X,\Z)$.

  By the ergodic decomposition theorem (Theorem~\ref{thm:ergdec}), we
  obtain a probability space~$(P,\nu)$ and $\Gamma$-invariant
  ergodic probability measures~$(\mu_p)_{p \in P}$ on~$X$ with
  \begin{align*}
     \mu(B) = \int_X \mu_p(B) \,d\nu(p)
  \end{align*}
  for all Borel sets~$B \subset X$. Hence, for all~$f \in B(X,\Z)$ we have
  \[ \int_X f \,d\mu = \int_P \int_X f \,d\mu_p \,d\nu(p).
  \]
  Taking $f := \sum_{j=0}^k |f_j|$ and keeping in mind that $A$ is a
  $\mu$-null set, we thus find a~$p \in P$ with 
  \[ \mu_p(A) = 0 
     \quad\text{and}\quad 
     \int_X f \,d\mu_p \leq \int_X f \,d\mu.
  \]
  
  We now show that $\ifsv M ^{(X, \mu_p)} \leq \int_X f \,d\mu\leq \ifsv M ^{(X,\mu)} +
  \varepsilon$: To this end, we consider the chain
  \[ c_p := [c] \in 
         \frac{B(X,\Z) \otimes_{\Z \Gamma} C_n({\ucov M},\Z)}%
              {N(X, \mu_p,\Z) 
                \otimes_{\Z \Gamma} C_n({\ucov M},\Z)} 
     \cong \linfz{(X,\mu_p)} 
       \otimes_{\Z \Gamma} C_n({\ucov M},\Z).
  \]
  Then $c_p$ is an $(X,\mu_p)$-parametrised fundamental cycle of~$M$, because 
  $c - c_\Z = \partial d + \chi_A \cdot c'$ holds in the ``strict''
  twisted chain complex~$B(X,\Z) \otimes_{\Z \Gamma} C_*({\ucov
  M},\Z)$ and $\mu_p(A) = 0$, and so
  \[ c_p - c_\Z = \partial [d] 
     \in \linfz{(X, \mu_p)}
         \otimes_{\Z \Gamma} C_n({\ucov M},\Z). 
  \]
  Furthermore, we obtain the desired estimate for the norms, namely
  \begin{align*}
            \ifsv {c_p} ^{(X,\mu_p)}
    & \leq     \sum_{j=0}^k \int_X |f_j| \,d\mu_p 
      =     \int_X f \,d\mu_p 
      \leq  \int_X f \,d\mu 
      \leq  \ifsv M^{(X,\mu)} + \varepsilon. \qedhere
  \end{align*}
\end{proof}

As products of ergodic spaces are not necessarily ergodic, it is not 
clear that there is an analogue of Corollary~\ref{cor:infpar} for ergodic 
parameter spaces:

\begin{question}
  Is the integral foliated simplicial volume always given by an ergodic 
  parameter space? Is the integral foliated simplicial volume always given 
  by the Bernoulli shift of the fundamental group? 
\end{question}

\subsection{Integral foliated simplicial volume and finite coverings}

We will now prove that integral foliated simplicial volume is multiplicative 
with respect to finite coverings:

\begin{thm}[multiplicativity of integral foliated simplicial volume]
  \label{thm:ifsvmult}
  Let $M$ be an oriented closed connected $n$-manifold and let $p
  \colon N \longrightarrow M$ be a $d$-sheeted covering with $d \in
  \N_{>0}$. Then
  \[ \ifsv M = \frac1d \cdot \ifsv N. 
  \]
\end{thm}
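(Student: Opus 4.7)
The plan is to prove $\ifsv N = d \cdot \ifsv M$ via a Shapiro-type chain isomorphism relating parametrised chain complexes of $N$ and $M$, then to take infima over parameter spaces in both directions. Set $\Lambda := \pi_1(N) \leq \Gamma := \pi_1(M)$ (an inclusion of index $d$), so that $\ucov N = \ucov M$, and fix right coset representatives $h_1, \dots, h_d$ for $\Lambda \backslash \Gamma$. For each standard $\Lambda$-space $(X, \mu)$, I consider the induced standard $\Gamma$-space
\[ (Y, \nu) := \Gamma \times_\Lambda (X, \mu), \]
namely the quotient of $\Gamma \times X$ by the $\Lambda$-action $\lambda \cdot (g, x) = (g\lambda^{-1}, \lambda x)$, with the obvious left $\Gamma$-action on the first factor and the probability measure $\nu$ induced from $\tfrac{1}{d}$ times counting measure on $\Gamma/\Lambda$ tensored with $\mu$; this measure is automatically $\Gamma$-invariant.

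The natural identification $\linfz{Y,\nu} \cong \linfz{X,\mu} \otimes_{\Z\Lambda} \Z\Gamma$ of right $\Z\Gamma$-modules, combined with associativity of tensor products, yields a chain isomorphism
\[ \Psi \colon \linfz{X,\mu} \otimes_{\Z\Lambda} C_*(\ucov M, \Z) \stackrel{\cong}{\longrightarrow} \linfz{Y,\nu} \otimes_{\Z\Gamma} C_*(\ucov M, \Z). \]
Writing any $N$-cycle in reduced form as $c = \sum_{\sigma, j} G_{\sigma, j} \otimes h_j \tilde\sigma$, where $\sigma$ ranges over simplices of $M$, $\tilde\sigma \in C_n(\ucov M, \Z)$ is a fixed lift, and $\{h_j \tilde\sigma\}_j$ gives the $d$ distinct lifts of $\sigma$ to $N$, the map $\Psi$ produces the reduced-form $M$-cycle $\sum_\sigma F_\sigma \otimes \tilde\sigma$, where $F_\sigma \in \linfz{Y,\nu}$ is the function whose restriction to the $j$th slice of $Y$ is $G_{\sigma, j}$. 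A direct calculation gives the norm identity
\[ \ifsv[big]{\Psi(c)}^Y = \frac{1}{d} \cdot \ifsv{c}^X, \]
and $\Psi$ carries the $X$-parametrised fundamental class of $N$ to the $Y$-parametrised fundamental class of $M$, as verified by applying $\Psi$ to the natural $X$-parametrisation of $p^!([M]_\Z) = [N]_\Z$ (where $p^! \colon C_n(M,\Z) \to C_n(N,\Z)$ is the classical transfer): $\Psi$ maps it back to $i_M^Y([M]_\Z)$.

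Taking the infimum over $\Lambda$-spaces $X$ in the resulting identity $\ifsv M^Y = \tfrac{1}{d} \cdot \ifsv N^X$ yields $\ifsv M \leq \tfrac{1}{d} \cdot \ifsv N$. For the reverse, given an arbitrary standard $\Gamma$-space $(Y', \nu')$, set $X' := Y'|_\Lambda$ and $(Y, \nu) := \Gamma \times_\Lambda X'$; the evaluation map $Y \to Y'$, $[g, y] \mapsto g \cdot y$, is a $\Gamma$-equivariant, measure-preserving map (by $\Gamma$-invariance of $\nu'$), so Proposition~\ref{prop:genprinc} yields $\ifsv M^Y \leq \ifsv M^{Y'}$. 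Combined with $\ifsv M^Y = \tfrac{1}{d} \cdot \ifsv N^{X'} \geq \tfrac{1}{d} \cdot \ifsv N$, this forces $\ifsv M^{Y'} \geq \tfrac{1}{d} \cdot \ifsv N$ for every standard $\Gamma$-space $Y'$, whence $\ifsv M \geq \tfrac{1}{d} \cdot \ifsv N$. The main technical obstacle is the construction and well-definedness of $\Psi$ itself, which hinges on the elementary computation $h_j \lambda = \lambda_j h_{\pi(j)}$ for $\lambda \in \Lambda$ (with $\pi$ a permutation of $\{1, \dots, d\}$ and $\lambda_j \in \Lambda$ depending on $\lambda$); once this is in place, the remainder of the argument is a straightforward manipulation of infima combined with one application of Proposition~\ref{prop:genprinc}.
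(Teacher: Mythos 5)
Your proposal is correct and follows essentially the same route as the paper: the induction space $\Gamma \times_\Lambda X$ with the Shapiro-type chain isomorphism and the $1/d$ norm identity is exactly Proposition~\ref{prop:ind}, and your evaluation map $[g,y]\mapsto g\cdot y$ combined with Proposition~\ref{prop:genprinc} for the reverse inequality is exactly Proposition~\ref{prop:res}. The only cosmetic difference is that the paper records the norm comparison as two inequalities for the mutually inverse chain maps rather than as a single equality, but the content is identical.
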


The theorem will follow from compatibility with respect to restriction
and induction of parameter spaces (Proposition~\ref{prop:res}
and~\ref{prop:ind}).

\begin{setup}\label{set:fincov}
  Let $M$ be an oriented closed connected $n$-manifold with
  fundamental group~$\Gamma$, and let $p \colon N \longrightarrow M$
  be a $d$-sheeted covering with $d \in \N_{>0}$.  Let $\Lambda$ be
  the fundamental group of~$N$, and let $\Lambda' = \pi_1(p)(\Lambda)
  \cong \Lambda$ be the subgroup of~$\Gamma$ associated with~$p$
  (which has index~$d$ in~$\Gamma$). For notational simplicity, 
  in the following, we will identify the groups~$\Lambda$ 
  and $\Lambda'$ via the isomorphism given by~$p$.
\end{setup}

For the discussion of induction spaces and associated constructions,
it will be necessary to choose representatives:

\begin{setup}\label{set:reps}
  Let $\Gamma$ be a countable group, let $\Lambda \subset \Gamma$ be a
  subgroup of finite index~$d := [\Gamma : \Lambda]$, and let $g_1, \dots, g_d \in \Gamma$ 
  be a set of representatives of~$\Lambda$ in~$\Gamma$:
  \[ \{ g_1 \cdot \Lambda, \dots, g_d \cdot \Lambda\} = \Gamma / \Lambda.
  \]
\end{setup}

\begin{defi}[induction]\label{def:induction}
  In the situation of Setup~\ref{set:reps}, let $(Y,\nu)$ be a
  standard $\Lambda$-space. Then the \emph{induction~$(\Gamma
    \times_\Lambda Y, \mu)$ of~$(Y,\nu)$ from~$\Lambda$ to~$\Gamma$}
  is the standard $\Gamma$-space defined as follows:
  \begin{itemize}
    \item The set
      \[ \Gamma \times_\Lambda Y 
         := \Gamma \times Y \bigm/
            \{ (g \cdot h , y) \sim (g, h\cdot y) \mid g \in \Gamma, h \in \Lambda, y \in Y 
            \}
      \]
      is endowed with the measurable structure induced from the bijection
      \begin{align*}  
        \Gamma \times_\Lambda Y & \longrightarrow \Gamma / \Lambda \times Y
        \\
        [g_j, y]
        & \longmapsto (g_j \cdot \Lambda, y)
      \end{align*}
      (where $\Gamma/\Lambda$ is given the discrete Borel structure). 
      Moreover, the probability measure~$\mu$ is the pull-back of the 
      measure~$1/d \cdot \nu' \otimes \nu$ on~$\Gamma/ \Lambda \times Y$ 
      under this bijection, 
      where $\nu'$ denotes the counting measure on~$\Gamma/\Lambda$.
    \item 
      The $\Gamma$-action on~$\Gamma \times_\Lambda Y$ is defined by 
      \begin{align*}
        \Gamma \times (\Gamma \times_\Lambda Y )
        & \longrightarrow \Gamma \times_\Lambda Y
        \\
        \bigl(g, [g',y]\bigr) & \longmapsto [g \cdot g', y].
      \end{align*}
  \end{itemize}
\end{defi}

Notice that in the above definition $\mu$ is indeed
$\Gamma$-invariant, and that the measurable structure and the
probability measure~$\mu$ on the induction space do \emph{not} depend
on the chosen set of representatives.

\begin{prop}[induction of parameter spaces]\label{prop:ind}
  In the situation of Setup~\ref{set:fincov}, let $(Y,\nu)$ 
  be a standard $\Lambda$-space. Then
  \[ \ifsv  M ^{\Gamma \times_\Lambda Y}
     = \frac 1d \cdot \ifsv N^Y.
  \]
\end{prop}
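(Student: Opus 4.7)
The strategy is to build an explicit chain isomorphism
\[ \Phi \colon \linfz{\Gamma \times_\Lambda Y} \otimes_{\Z\Gamma} C_*(\ucov M, \Z) \;\xrightarrow{\;\cong\;}\; \linfz{Y} \otimes_{\Z\Lambda} C_*(\ucov N, \Z) \]
that multiplies $\ell^1$-norms by a factor of $d$ and identifies the two parametrised fundamental classes; taking infima over parametrised fundamental cycles then yields the desired equality. The key observation is that, since $p \colon N \to M$ is a covering, the universal covers coincide ($\ucov N = \ucov M$), and $C_*(\ucov N, \Z)$ is simply the restriction of $C_*(\ucov M, \Z)$ along the inclusion $\Z\Lambda \hookrightarrow \Z\Gamma$.

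The first step is to identify the coefficient modules via a Shapiro-type isomorphism. Writing $Y_i := \{[g_i,y] : y \in Y\} \subset \Gamma \times_\Lambda Y$ for the ``fibre'' over the coset $g_i\Lambda$, the formula
\[ \alpha\bigl(f \otimes g_i^{-1}\bigr) := \chi_{Y_i} \cdot \bar f, \qquad \bar f\bigl([g_i,y]\bigr) := f(y), \]
is easily checked to define a right $\Z\Gamma$-module isomorphism $\alpha \colon \linfz{Y} \otimes_{\Z\Lambda} \Z\Gamma \to \linfz{\Gamma \times_\Lambda Y}$: both sides decompose as $\bigoplus_{i=1}^d \linfz{Y}$, and well-definedness over $\Z\Lambda$ together with right $\Z\Gamma$-equivariance reduce to the coset arithmetic $g \cdot g_i = g_{\sigma_g(i)} \cdot h_i(g)$. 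Tensoring with $C_*(\ucov M, \Z)$ over $\Z\Gamma$ and cancelling then produces the chain isomorphism $\Phi$, which on simple tensors sends $\chi_{Y_i} \bar f \otimes \sigma$ to $f \otimes g_i^{-1}\sigma$.

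Next, I would match up fundamental classes and norms. For an integral fundamental cycle $c_\Z = \sum_j \epsilon_j \otimes \sigma_j$ of $M$, the constant function $1 \in \linfz{\Gamma \times_\Lambda Y}$ decomposes under $\alpha^{-1}$ as $\sum_{i=1}^d 1 \otimes g_i^{-1}$, so
\[ \Phi\bigl(i_M^{\Gamma \times_\Lambda Y}(c_\Z)\bigr) = i_N^Y\Bigl(\sum_{i,j} \epsilon_j \otimes g_i^{-1}\sigma_j\Bigr); \]
the right-hand integral chain pushes forward to $d \cdot c_\Z$ under $p$, which (since $H_n(p)$ is multiplication by $d$ on $H_n \cong \Z$) forces its homology class to be $[N]_\Z$. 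For the norm comparison, take $c = \sum_j F^{(j)} \otimes \sigma_j$ in reduced form on the $M$-side and set $F^{(j)}_i(y) := F^{(j)}([g_i,y])$; by the definition of $\mu$ as the pull-back of $\tfrac{1}{d}\nu' \otimes \nu$,
\[ \ifsv{c}^{\Gamma \times_\Lambda Y} = \sum_j \int_{\Gamma \times_\Lambda Y} |F^{(j)}|\,d\mu = \frac{1}{d} \sum_{i,j} \int_Y |F^{(j)}_i|\,d\nu, \]
while $\Phi(c) = \sum_{i,j} F^{(j)}_i \otimes g_i^{-1}\sigma_j$. If one verifies that the simplices $\pi_N(g_i^{-1}\sigma_j)$ are pairwise distinct, then $\Phi(c)$ is in reduced form on the $N$-side and $\ifsv{\Phi(c)}^Y = d \cdot \ifsv{c}^{\Gamma \times_\Lambda Y}$.

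The main bookkeeping obstacle is this preservation of reduced form: one must show that $g_i^{-1}\sigma_j = h \cdot g_{i'}^{-1}\sigma_{j'}$ with $h \in \Lambda$ forces $(i,j) = (i',j')$ and $h = e$. This follows from the distinctness of the $\pi_M\sigma_j$ (via the reduced form of $c$ on $M$) combined with the freeness of the $\Gamma$-action on non-degenerate simplices of $\ucov M$ and the fact that the $g_i$ represent distinct cosets. The reverse inequality is more forgiving: for a reduced chain $c'$ on the $N$-side, $\Phi^{-1}(c')$ may be non-reduced on the $M$-side, but its naive $\ell^1$-sum still gives the upper bound $\tfrac{1}{d}\ifsv{c'}^Y$ for the corresponding class, completing the argument.
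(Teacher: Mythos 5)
Your proof is correct and follows essentially the same route as the paper's: the Shapiro-type isomorphism $\linfz{Y} \otimes_{\Z\Lambda} \Z\Gamma \cong \linfz{\Gamma \times_\Lambda Y}$, the induced chain isomorphism using $\ucov M = \ucov N$, the transfer argument identifying the parametrised fundamental classes, and the factor-$d$ norm bookkeeping coming from the weight $1/d$ on each copy of $Y$. Your extra care about preservation of reduced form (and the one-sided treatment of the reverse inequality via $\Phi^{-1}$) only makes explicit what the paper dismisses as ``not difficult to see''.
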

\begin{proof}
  We choose representatives~$g_1, \dots, g_d$ for
  the index~$d$ subgroup~$\Lambda \subset \Gamma$ as in
  Setup~\ref{set:reps}. Induction of parameter spaces is compatible
  with algebraic induction of modules: We have (well-defined) mutually
  inverse $\Z\Gamma$-isomorphisms
  \begin{align*}
    \varphi \colon \linfz{\Gamma \times_{\Lambda} Y} & \longrightarrow
    \linfz Y \otimes_{\Z \Lambda} \Z \Gamma
    \\
    f &\longmapsto \sum_{j=1}^d \bigl(y \mapsto f([g_j,y])\bigr) \otimes g_j,
    \\
    \psi \colon \linfz Y \otimes_{\Z\Lambda} \Z \Gamma & \longrightarrow
    \linfz{\Gamma \times_{\Lambda} Y}
    \\
    f \otimes g_j & \longmapsto
    \left( [g_k,y] 
       \mapsto \begin{cases}
         f(y) & \text{if $k=j$}\\
         0    & \text{if $k\neq j$}
         \end{cases}
    \right).
  \end{align*}
  Because $p \colon M\longrightarrow N$ is a finite covering, $M$ and
  $N$ share the same universal covering space~$\ucov M = \ucov N$ and
  the $\Lambda$-action on~$\ucov N$ is nothing but the restriction of
  the $\Gamma$-action on~$\ucov M$. Therefore, the above maps induce
  mutually inverse chain complex isomorphisms
  \begin{align*}
    \Phi \colon 
    \linfz {\Gamma \times_{\Lambda} Y} 
      \otimes_{\Z \Gamma} C_*(\ucov M,\Z)
      & \longrightarrow 
      \linfz Y \otimes_{\Z\Lambda} \Z \Gamma \otimes_{\Z \Gamma} C_*(\ucov M,\Z)\\
      & \phantom{\longrightarrow\,}\text{\makebox[0pt][r]{$\cong$}}\ \,
        \linfz Y \otimes_{\Z\Lambda} C_*(\ucov N,\Z)
      \\
      f \otimes c 
      & \longmapsto
      \sum_{j =1}^d f([g_j,\,\cdot\,]) \otimes g_j \cdot c,
      \\
      \Psi 
      \colon \linfz Y \otimes_{\Z \Lambda} C_*(\ucov N,\Z) 
      & \longrightarrow \linfz{\Gamma\times_\Lambda Y} \otimes_{\Z \Gamma} C_*(\ucov M,\Z)
      \\
      f \otimes c &\longmapsto \psi(f \otimes 1) \otimes c. 
  \end{align*}
  
  It is not difficult to see that $\Phi$ and $\Psi$ map $\Gamma
  \times_{\Lambda} Y$-parametrised fundamental cycles of~$M$ to
  $Y$-parametrised fundamental cycles of~$N$, and vice versa: It
  suffices to prove this claim for~$\Phi$. For this, we use the following 
  transfer type argument: Let $c_\Z = \sum_{j=1}^k a_j
  \otimes \sigma_j \in \Z \otimes_{\Z \Gamma} C_n(\ucov M,\Z) \cong
  C_n(M,\Z)$ be a fundamental cycle of~$M$. By construction,
  \[ \Phi \circ i_M^{\Gamma \times_\Lambda Y}(c_\Z) 
     = i_N^Y\biggl( \sum_{j=1}^k a_j \otimes \sum_{\ell=1}^d g_\ell \cdot \sigma_j
           \biggr),
  \]
  and $g_1 \cdot \sigma_j, \dots, g_d \cdot \sigma_j$ are 
  $\pi_N$-lifts of the $d$ different $p$-lifts of~$\pi_M\circ\sigma_j$,
  where $\pi_N \colon \ucov N \longrightarrow N$ and $\pi_M \colon
  \ucov M \longrightarrow M$ denote the universal covering maps. Therefore, 
  $\sum_{j=1}^k a_j \otimes \sum_{\ell=1}^d g_\ell \cdot \sigma_j$ is a 
  fundamental cycle of~$N$, which proves the claim about parametrised 
  fundamental cycles.
  
  By definition of the induction space (Definition~\ref{def:induction}), 
  the $d$~copies of~$Y$ inside~$\Gamma \times_\Lambda Y$ are each given the 
  weight~$1/d$. Therefore, it is not difficult to show that 
  \[ \ifsv{\Phi(c)}^Y \leq d \cdot \ifsv c^{\Gamma\times_\Lambda Y} 
  \]
  holds for all chains~$c \in \linfz{\Gamma\times_\Lambda Y}
  \otimes_{\Z \Gamma} C_*(\ucov M,\Z)$, and that
  \[ \ifsv{\Psi(c)}^{\Gamma\times_\Lambda Y} \leq \frac1d \cdot \ifsv c^Y
  \]
  holds for all chains~$c \in \linfz Y \otimes_{\Z \Lambda} C_*(\ucov
  N,\Z)$. Taking the infimum over all parametrised fundamental cycles
  therefore yields that
  \[ \ifsv M ^{\Gamma\times_\Lambda Y} \leq \frac1d \cdot \ifsv N ^Y 
     \quad \text{and}\quad
     \ifsv N ^Y \leq d \cdot \ifsv M ^{\Gamma\times_\Lambda Y}. 
     \qedhere
  \]
\end{proof}

\begin{cor}[coset spaces as parameter space]\label{cor:cosetpar}
  In the situation of Setup~\ref{set:fincov} we have 
  \[ \ifsv M ^{\Gamma/\Lambda} = \frac1d \cdot \isv N. 
  \]
  Here, we equip the finite set~$\Gamma/\Lambda$ with the left
  $\Gamma$-action given by translation and the normalised counting
  measure. 
\end{cor}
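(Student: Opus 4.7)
The plan is to recognize the coset space $\Gamma/\Lambda$ (with translation action and normalized counting measure) as precisely an induction space, and then apply Proposition~\ref{prop:ind} to a trivial $\Lambda$-space.

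More concretely, I would let $(Y,\nu)$ be the standard $\Lambda$-space consisting of a single point (equivalently, with trivial $\Lambda$-action). Unwinding Definition~\ref{def:induction}, the underlying set of $\Gamma \times_\Lambda Y$ is canonically identified with $\Gamma/\Lambda$ via $[g_j, \ast] \mapsto g_j \cdot \Lambda$, the $\Gamma$-action corresponds to the left translation action on~$\Gamma/\Lambda$, and the measure is the pull-back of $1/d \cdot \nu' \otimes \nu$ under this identification. Since $\nu$ is the unique probability measure on a point and $\nu'$ is counting measure on~$\Gamma/\Lambda$, this is precisely the normalised counting measure on~$\Gamma/\Lambda$. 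Hence $\Gamma/\Lambda$ (with the given data) is isomorphic to~$\Gamma \times_\Lambda Y$ as a standard $\Gamma$-space.

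I would then invoke Proposition~\ref{prop:ind} to obtain
\[
  \ifsv{M}^{\Gamma/\Lambda}
  = \ifsv{M}^{\Gamma \times_\Lambda Y}
  = \frac{1}{d} \cdot \ifsv{N}^Y,
\]
and conclude by Example~\ref{exa:trivialpar} that $\ifsv{N}^Y = \isv{N}$, because $Y$ is a point (and so a standard $\Lambda$-space with trivial action). Combining the two equalities yields the claim.

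There is essentially no obstacle here: the entire argument is a direct specialisation of the induction formula already established in Proposition~\ref{prop:ind}. The only minor subtlety worth checking carefully is that the measure structure on~$\Gamma \times_\Lambda \{\ast\}$ really matches the normalised counting measure on~$\Gamma/\Lambda$, which follows immediately from the definition once one notices that both measures assign mass~$1/d$ to each coset representative.
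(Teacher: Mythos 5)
Your proposal is correct and matches the paper's proof exactly: the paper also identifies $\Gamma/\Lambda$ with the induction $\Gamma \times_\Lambda X$ of a one-point standard $\Lambda$-space and then applies Proposition~\ref{prop:ind} together with Example~\ref{exa:trivialpar}. Your additional verification that the induced measure is the normalised counting measure is a welcome (if routine) detail that the paper leaves implicit.
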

\begin{proof}
  Let $X$ be a standard $\Lambda$-space consisting of a single point. Then
  \[ \Gamma/\Lambda 
     \cong \Gamma \times_\Lambda X
  \]
  (in the category of standard $\Gamma$-spaces). Hence, Proposition~\ref{prop:ind} 
  and Example~\ref{exa:trivialpar} show that
  \[ \ifsv M ^{\Gamma/\Lambda} = \frac 1d \cdot \ifsv N^X = \frac1d \cdot \isv N.
     \qedhere
  \]
\end{proof}

Conversely, we will now consider restriction of parameter spaces:

\begin{defi}[restriction]
  Let $\Gamma$ be a group, let $(X,\mu)$ be a standard $\Gamma$-space, and
  let $\Lambda \subset \Gamma$ be a subgroup. Restricting the
  $\Gamma$-action on~$X$ to~$\Lambda$ (and keeping the same
  probability measure) results in a standard $\Lambda$-space, the
  \emph{restriction~$\res^\Gamma_\Lambda (X,\mu)$ of~$(X,\mu)$ from~$\Gamma$
    to~$\Lambda$}.
\end{defi}

\begin{prop}[restriction of paramater spaces]\label{prop:res}
  In the situation of Setup~\ref{set:fincov} let $(X,\mu)$ 
  be a standard $\Gamma$-space. Then
  \[ \frac1d \cdot \ifsv N ^{\res^\Gamma_\Lambda X}
     \leq 
     \ifsv M^X. 
  \]
\end{prop}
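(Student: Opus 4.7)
The plan is to reduce the proposition to the two tools already proved earlier in this section: the induction formula (Proposition~\ref{prop:ind}) and the general comparison principle for parameter spaces (Proposition~\ref{prop:genprinc}). The key observation is that there is a canonical $\Gamma$-equivariant, measure-preserving map
\[ \varphi \colon \Gamma \times_{\Lambda} \res^\Gamma_\Lambda (X,\mu) \longrightarrow (X,\mu), \qquad [g,x] \longmapsto g \cdot x, \]
and once this is in place the inequality follows by rewriting the right-hand side through induction.

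First I would check that $\varphi$ is well-defined (from $[gh,x] = [g,hx]$ for $h \in \Lambda$ and the equality $gh \cdot x = g \cdot (hx)$), measurable, and $\Gamma$-equivariant. Next, using a system of representatives~$g_1,\dots,g_d$ of $\Gamma/\Lambda$ as in Setup~\ref{set:reps} and the explicit description of the induction measure~$\mu_{\mathrm{ind}}$ in Definition~\ref{def:induction}, I would verify the measure-compatibility hypothesis~\eqref{measurecompatibility}: for every measurable~$A \subset X$,
\[ \varphi^{-1}(A) \text{ corresponds to } \bigsqcup_{j=1}^d \{g_j\Lambda\} \times g_j^{-1} A, \]
so that, by $\Gamma$-invariance of~$\mu$,
\[ \mu_{\mathrm{ind}}\bigl(\varphi^{-1}(A)\bigr) = \sum_{j=1}^d \frac{1}{d}\cdot \mu\bigl(g_j^{-1} A\bigr) = \mu(A). \]
In particular $\mu_{\mathrm{ind}}(\varphi^{-1}(A)) \leq \mu(A)$.

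Applying Proposition~\ref{prop:genprinc} to $\varphi$ then yields
\[ \ifsv M ^{\Gamma \times_\Lambda \res^\Gamma_\Lambda X} \leq \ifsv M ^X, \]
and combining this with Proposition~\ref{prop:ind} (applied to the standard $\Lambda$-space $Y := \res^\Gamma_\Lambda X$) gives
\[ \frac{1}{d} \cdot \ifsv N ^{\res^\Gamma_\Lambda X} = \ifsv M ^{\Gamma \times_\Lambda \res^\Gamma_\Lambda X} \leq \ifsv M ^X, \]
which is the desired inequality.

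The only real work is the measure-theoretic bookkeeping in checking~\eqref{measurecompatibility}; this is routine once one unwinds the definition of the induction measure, and the $1/d$ normalisation on the $\Gamma/\Lambda$ factor is precisely what makes $\varphi$ measure-preserving. I do not foresee a genuine obstacle — the heavy lifting has already been done in Propositions~\ref{prop:ind} and~\ref{prop:genprinc}.
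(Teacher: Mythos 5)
Your proof is correct and follows exactly the paper's own argument: reduce to the inequality $\ifsv M^{\Gamma\times_\Lambda \res^\Gamma_\Lambda X}\leq \ifsv M^X$ via Proposition~\ref{prop:ind}, and obtain that inequality by applying Proposition~\ref{prop:genprinc} to the map~$[g,x]\mapsto g\cdot x$. Your explicit verification that this map is measure-preserving is a welcome detail that the paper leaves implicit.
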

\begin{proof}
  In view of Proposition~\ref{prop:ind}, it suffices to show that
  \[ \ifsv M ^{\Gamma \times_\Lambda \res^\Gamma_\Lambda X} \leq \ifsv M^X. 
  \]
  The map 
  \begin{align*}
    \Gamma \times_\Lambda \res^\Gamma_\Lambda X 
    & \longrightarrow X \\
    [g,x] & \longmapsto g\cdot x
  \end{align*}
  satisfies the hypotheses of Proposition~\ref{prop:genprinc}. Therefore, 
  we obtain the desired estimate $\ifsv M ^{\Gamma \times_\Lambda \res^\Gamma_\Lambda X} \leq \ifsv M^X$.
\end{proof}

\begin{exa}
  In the situation of Proposition~\ref{prop:res}, in general, equality
  will \emph{not} hold. For example, we could consider a double
  covering~$S^1 \longrightarrow S^1$ and a parameter space for the
  base manifold consisting of a single point.
\end{exa}

We will now complete the proof of Theorem~\ref{thm:ifsvmult}:

\begin{proof}[Proof of Theorem~\ref{thm:ifsvmult}]
  From Proposition~\ref{prop:res} we obtain, by taking the infimum
  over all standard $\Gamma$-spaces as parameter spaces for~$M$,
  \[ \frac 1d \cdot \ifsv N \leq \ifsv M. 
  \]
  Conversely, from Proposition~\ref{prop:ind} we obtain, by taking the
  infimum over all standard $\Lambda$-spaces as parameter spaces for~$N$, 
  \[ \ifsv M \leq \frac 1d \cdot \ifsv N. 
  \qedhere
  \]
\end{proof}

\section{A proportionality principle for\\ integral foliated simplicial volume}
\label{sec:pp}

In this section, we will provide a proof of the proportionality
principle Theorem~\ref{thm:ppgen} and of Corollary~\ref{cor:pphyp}.
We will use the language of measure equivalence of groups and
techniques of Bader, Furman, and Sauer~\cite[Theorem 1.9]{bfs}.

\subsection{Measure equivalence}
The notion of measure equivalence was originally introduced by
Gromov~\cite[0.5.E1]{Gromov2} as a measure-theoretic analogue of
quasi-isometry.
\begin{defi}[measure equivalence]
Two countable groups $\Ga$ and $\La$ are called \emph{measure equivalent} (ME) if there is a standard 
measure space $(\Omega,\mu)$ with commuting measure preserving $\Ga$- and $\La$-actions, 
such that each of the actions admits a finite measure fundamental domain~$X_\Ga$ and $X_\La$ respectively. 
The space $(\Omega,\mu)$ endowed with these actions is called an \emph{ME-coupling} of $\Ga$ and $\La$.
The ratio $c_\Omega=\mu(X_\La)/\mu(X_\Ga)$ is independent of the
chosen fundamental domains and is called the \emph{coupling index}
of the ME-coupling~$\Omega$.
\end{defi}

In our context, the fundamental domains do not need to be strict
fundamental domains; it suffices that $\Omega = \bigcup_{\gamma \in
  \Gamma} \gamma \cdot X_\Gamma$ is a disjoint decomposition up to
measure~$0$. The cocycle in Definition~\ref{cocycle} then will only be
well-defined up to sets of measure~$0$; however, this poses no
problems in the sequel as we will pass to $L^1$- and $L^\infty$-spaces
anyway.

\begin{exa}[lattices]\label{ex:our case}
A second countable locally compact group $G$ with its Haar measure is
an ME-coupling for every pair of lattices $\Ga$ and $\La$ in
$G$. Indeed, the Haar measure on~$G$ is bi-invariant (because $G$
contains lattices), and the left actions
\begin{align*}
  \Gamma \times G & \longrightarrow G
  &
  \Lambda \times G & \longrightarrow G
 \\
  (\gamma, g) & \longmapsto \gamma g
 &
  (\lambda, g) & \longmapsto g \lambda^{-1}
\end{align*}
of~$\Gamma$ and $\Lambda$ on~$G$ given by multiplication in~$G$
commute with each other.
\end{exa}

\begin{setup}\label{ME}
Let $(\Omega,\mu)$ be an ME-coupling of $\Ga$ and $\La$. We suppose that both 
the actions are left actions, and
we fix a fundamental domain for each of the
two actions on the ME-coupling, denoted as $X_\Ga$ and $X_\La$ respectively.
\end{setup}
\begin{defi}[ergodic/mixing ME-coupling]
In the situation of Setup~\ref{ME}, the ME-coupling is \emph{ergodic} 
(resp.~\emph{mixing}) if the $\Ga$-action on $\La \backslash\Omega$ is 
ergodic (resp.~mixing) and the $\La$-action on
$\Ga\backslash\Omega$ is ergodic (resp.~mixing).
\end{defi}
\begin{rem}\label{ME_ergodicity}
Note that in this situation
the $\Ga$-action on $\La\backslash\Omega$ is ergodic if and only if the $\La$-action on $\Ga\backslash\Omega$ is ergodic \cite[Lemma 2.2]{Furman}.
\end{rem}

\begin{defi}[ME-cocycle]\label{cocycle}
In the situation of the Setup~\ref{ME}, we define the 
measurable cocycle $\alpha_\La$ associated to $X_\La$ as the map
$$\alpha_\La:\Ga\times X_\La\rightarrow \La$$
such that $\alpha_\La(\gamma,x)$ is the unique element satisfying
$\gamma x\in \alpha_\La(\gamma,x)^{-1}X_\La$
for all $x \in X_\La$ and $\gamma \in \Ga$. Similarly, we define 
$\alpha_\Ga:\La\times X_\Ga\rightarrow \Ga$.
If we choose another fundamental domain for $\La\backslash \Omega$
then the associated cocycle is measurably cohomologous to
$\alpha_\La$ \cite[Section 2]{Furman} (the same for $\alpha_\Ga$).
\end{defi}
With this notation the natural left action of $\Ga$ on 
$X_\La$ and of $\La$ on $X_\Ga$
is described as follows:
\begin{equation}\label{eq:action}
\begin{aligned}
    \Gamma \times X_\Lambda & \longrightarrow X_\Lambda
  & \Lambda \times X_\Gamma & \longrightarrow X_\Gamma 
    \\
    (\gamma,x) & \longmapsto \gamma \bullet x := \alpha_\Lambda(\gamma,x) \gamma x
  & (\lambda,y) &\longmapsto \lambda \bullet y:= \alpha_\Gamma(\lambda,y)\lambda y,
\end{aligned}
\end{equation}
where we write $\gamma \bullet x$ to distinguish
it from the action $\gamma x$ of $\Gamma$ on~$\Omega$.
\begin{rem}\label{rem:isom_action}
In the situation of the Setup~\ref{ME},
consider the standard $\Gamma$-space 
$(X_\La,\mu_\La=\mu(X_\La)^{-1}{\mu_|}_{X_\La})$ 
with $\Ga$-action described above and 
the standard $\Ga$-space $\La\backslash\Omega$
with the probability measure induced from $\mu$
and the left translation $\Ga$-action.
Then the map $X_\La\hookrightarrow \Omega \rightarrow \La\backslash\Omega$  
is a measure isomorphism.
\end{rem}

\begin{defi}[bounded ME-coupling]
In the situation of the Setup~\ref{ME},
assume that $\La$ is finitely generated, and let $l:\La\rightarrow \N$ 
be the length function associated to some word-metric on $\La$. 
We say that the fundamental domain~$X_\La$ is \emph{bounded} if, 
for every $\gamma \in \Ga$, the function $x \mapsto l(\alpha_\La(\gamma,x))$ 
is in~$L^\infty(X_\La,\R)$. 

Let $\Ga$ and $\La$ be finitely generated. An ME-coupling of $\Ga$ and 
$\La$ is \emph{bounded}
if it admits bounded $\Ga$- and $\La$-fundamental domains.
\end{defi}
\begin{exa}\label{rem:our case}
A connected second countable locally compact group $G$ with 
its Haar measure is a bounded ME-coupling 
for every pair of uniform lattices in~$G$ \cite[p. 321]{bfs2} 
\cite[Corollary 6.12 p. 58]{stro}. 
\end{exa}

\subsection{Homology of groups}
In the aspherical case, we can express integral foliated simplicial volume in terms of group homology:
Let $\Gamma$ be a discrete group. The \emph{bar resolution} of $\Gamma$ 
is the $\Z\Ga$-chain complex $C_\ast(\Ga)$ defined as follows: 
for each~$n\in \N$ let
$$C_n(\Ga)=\biggl\{ \sum_{\gamma\in \Ga^{n+1} }
a_\ga\cdot\gamma_0\cdot[\gamma_1|\cdots|\gamma_n] 
\biggm| \fa{\ga=(\gamma_0,\dots,\gamma_n)\in \Ga^{n+1}} a_\ga\in\Z\biggr\}$$ 
with the $\Ga$-action characterized by
$$\overline{\gamma} \cdot(\gamma_0\cdot[\gamma_1|\cdots|\gamma_n])=
(\overline{\gamma}\cdot\gamma_0)\cdot[\gamma_1|\cdots|\gamma_n] $$
for all $\overline{\gamma}\in \Ga$ and all $\gamma\in \Ga^{n+1}$.
The differential $\partial_\ast:C_\ast(\Ga)\rightarrow C_{\ast-1}(\Ga)$ is
defined by
$$
\begin{array}{rcl}
C_n(\Ga)&\longrightarrow &C_{n-1}(\Ga)\\
\gamma_0\cdot[\gamma_1|\cdots|\gamma_n]& \longmapsto & \gamma_0\cdot\gamma_1\cdot 
[\gamma_2|\cdots|\gamma_n]\\
& &+\sum_{j=1}^{n-1} (-1)^j \cdot \gamma_0\cdot[\gamma_1|\cdots|\gamma_{j-1}|\gamma_j
\cdot \gamma_{j+1}|\gamma_{j+2}|\cdots|\gamma_n]\\
& & +(-1)^n \cdot \gamma_0\cdot[\gamma_1|\cdots|\gamma_{n-1}].
\end{array}
$$ Moreover, the bar resolution is a normed chain complex (i.e., the
differentials in each degree are bounded operators) with the
$\ell^1$-norm given by
$$\lone[bigg]{ \sum_{\gamma\in \Ga^{n+1} }
a_\ga\cdot\gamma_0\cdot[\gamma_1|\cdots|\gamma_n]}= \sum_{\gamma\in \Ga^{n+1} }
|a_\ga|.$$

We obtain a version of the bar resolution with coefficients using 
the tensor product: For every normed right $\Z\Ga$-module $A$ let
$$C_\ast(\Ga,A):=A\otimes_{\Z\Ga} C_\ast(\Ga).$$ 
\begin{defi}[group homology]
Let $\Ga$ be a discrete group and $A$ be a normed right $\Z\Ga$-module.
Then the \emph{group homology of $\Ga$ with coefficients in $A$} is
$$H_\ast(\Ga,A):=H_\ast(C_\ast(\Ga,A)).$$
The $\ell^1$-norm on the chain complex induces an $\ell^1$-semi-norm on 
the group homology.
\end{defi}

\begin{prop}\label{prop:asp}
Let $M$ be an aspherical manifold with universal covering~$\widetilde{M}$,
 and fundamental group $\Ga$. Let $A$ be a normed right 
$\Z\Ga$-module. Then there exists a natural chain map
$$c_\Ga:C_\ast(\widetilde{M},A)\longrightarrow C_\ast(\Ga,A)$$
that induces an isometric isomorphism
$$c_\Ga:H_\ast(M,A)\longrightarrow H_\ast(\Ga,A).$$
\end{prop}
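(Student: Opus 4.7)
Both $C_\ast(\widetilde M, \Z)$ and the bar resolution $C_\ast(\Ga)$ are normed free $\Z\Ga$-resolutions of the trivial module~$\Z$. I would compare them via the fundamental theorem of homological algebra and then refine the resulting homology isomorphism to an isometry using norm non-increasing comparison maps in both directions.

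Since $M$ is aspherical, $\widetilde M$ is contractible and $\Ga$ acts freely on~$\widetilde M$. Choosing one representative per $\Ga$-orbit of singular $n$-simplices yields a $\Z\Ga$-basis of $C_n(\widetilde M, \Z)$, and contractibility of~$\widetilde M$ makes $C_\ast(\widetilde M, \Z)$ a free $\Z\Ga$-resolution of~$\Z$. Fixing a basepoint $\widetilde x_0 \in \widetilde M$, the fundamental theorem of homological algebra produces $\Z\Ga$-equivariant chain maps $r \colon C_\ast(\widetilde M, \Z) \to C_\ast(\Ga)$ and $s \colon C_\ast(\Ga) \to C_\ast(\widetilde M, \Z)$ lifting $\id_\Z$, unique up to $\Z\Ga$-chain homotopy, with $s \circ r$ and $r \circ s$ chain-homotopic to the respective identities. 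The map~$s$ admits an explicit basis-to-basis description: send each bar basis element $[\ga_1|\cdots|\ga_n]$ to a fixed singular simplex in~$\widetilde M$ with ordered vertices $\widetilde x_0, \ga_1 \widetilde x_0, \dots, \ga_1\cdots\ga_n \widetilde x_0$; this is norm non-increasing at the chain level. Setting $c_\Ga := A \otimes_{\Z\Ga} r$ and using that tensoring over~$\Z\Ga$ preserves chain homotopy equivalences, we obtain the desired natural chain map, whose induced map $H_\ast(c_\Ga)$ is an isomorphism with inverse induced by $A \otimes_{\Z\Ga} s$. Naturality in~$A$ is immediate from the tensor construction, and naturality in continuous maps between aspherical spaces follows from the uniqueness part of the fundamental theorem.

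For the isometry, the map $A \otimes_{\Z\Ga} s$ is norm non-increasing at the chain level by construction, so $\|H_\ast(c_\Ga)^{-1}(\beta)\|_1 \leq \|\beta\|_1$ for all $\beta \in H_\ast(\Ga, A)$, which yields $\|\alpha\|_1 \leq \|H_\ast(c_\Ga)(\alpha)\|_1$ for every $\alpha \in H_\ast(M, A)$. The reverse inequality requires~$r$ to be norm non-increasing as well, and this is the main obstacle: the abstract lift provided by homological algebra is unique only up to $\Z\Ga$-chain homotopy and need not a priori respect the $\ell^1$-norm. The fix is an inductive, equivariant acyclic-models construction carried out in the normed category: at each step, choose~$r$ on a $\Z\Ga$-basis simplex~$\sigma$ to be an $\ell^1$-efficient preimage of $r(\partial\sigma)$ in the bar complex, exploiting the fact that $C_\ast(\Ga)$ admits a canonical $\Z$-linear contracting chain homotopy of operator norm~$\leq 1$. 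This produces an~$r$ with $\|r(\sigma)\|_1 \leq 1$ on each basis simplex, hence norm non-increasing after $\Z\Ga$-linear extension and after tensoring with~$A$. Combined with the opposite inequality, this yields the desired isometry of the $\ell^1$-semi-norms on homology.
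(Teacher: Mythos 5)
Your overall strategy -- comparing the two free $\Z\Ga$-resolutions of~$\Z$ by norm non-increasing chain maps in both directions and tensoring with~$A$ -- is the same as the paper's, and your treatment of the map $s\colon C_\ast(\Ga)\to C_\ast(\widetilde M,\Z)$ (cone off over a basepoint orbit, one simplex per bar generator, hence norm non-increasing) is fine. The gap is in the map $r$ in the other direction, which you correctly identify as the crux but do not actually construct. Your proposed fix -- inductively take an ``$\ell^1$-efficient preimage'' of $r(\partial\sigma)$, ``exploiting'' the contracting homotopy $s_\ast$ of operator norm $\leq 1$ -- does not deliver the claimed bound $\lone{r(\sigma)}\leq 1$. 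The contracting homotopy only yields a preimage of norm at most $\lone{r(\partial\sigma)}$, and $\partial\sigma$ has $n+1$ faces, so the induction gives $\lone{r(\sigma)}\leq (n+1)\cdot\max_{\text{faces }\tau}\lone{r(\tau)}$, i.e.\ factorial growth in the degree. Already in degree~$1$ the homotopy applied to $\gamma_1[\;]-\gamma_0[\;]$ produces $[\gamma_1]-[\gamma_0]$, of norm~$2$; the norm-one preimage $\gamma_0[\gamma_0^{-1}\gamma_1]$ exists but is not found by this mechanism, and nothing in your argument guarantees that a norm-one preimage exists in higher degrees either.

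What is missing is the explicit classical map that the paper invokes: choose a set-theoretic $\Ga$-fundamental domain $F\subset\widetilde M$, label each vertex $\sigma(e_i)$ of a singular simplex $\sigma$ by the unique $g_i\in\Ga$ with $\sigma(e_i)\in g_i F$, and set $r(\sigma):=g_0\cdot[g_0^{-1}g_1|g_1^{-1}g_2|\cdots|g_{n-1}^{-1}g_n]$. One checks directly that this is $\Ga$-equivariant, commutes with the differentials (the $i$-th face of $\sigma$ drops the label $g_i$, matching the bar differential), and sends each singular simplex to a \emph{single} bar generator, hence is norm non-increasing on the nose. With this $r$ in hand, your argument for the two inequalities and the passage through $A\otimes_{\Z\Ga}-$ goes through and recovers the paper's proof; without it, the key inequality $\lone{H_\ast(c_\Ga)(\alpha)}\leq\lone{\alpha}$ is unproved.
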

\begin{proof}
  It is not difficult to see that the classical
  mutually inverse $\Z \Gamma$-chain homotopy equivalences~$C_*(\ucov M,\Z)
  \longleftrightarrow C_*(\Gamma,\Z)$, defined using a $\Gamma$-funda\-men\-tal domain
  on~$\ucov M$, are norm non-increasing. This gives the desired isometric 
  isomorphism in homology. 
\end{proof}

\begin{cor}\label{cor:homology}
Let $M$, $\Ga$, and $(X,\mu)$ be as in Definition~\ref{def:pfc}. If
$M$ is aspherical, there exists an isometric isomorphism
$$H_\ast(M,\linfz {X})\stackrel{\cong}{\longrightarrow}
H_\ast(\Ga,\linfz {X}),$$
with respect to the $\ell^1$-semi-norm induced by the $\ell^1$-norm on 
$\linfz{X}$.
\end{cor}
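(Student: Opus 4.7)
The plan is to specialise Proposition~\ref{prop:asp} to the coefficient module $A := \linfz{X}$. The only thing that needs to be checked is that $\linfz X$, equipped with the right $\Gamma$-action given in Definition~\ref{def:pfc} and with the $L^1$-norm $f \mapsto \int_X |f|\,d\mu$, genuinely is a normed right $\Z\Gamma$-module in the sense required by Proposition~\ref{prop:asp}. Since the $\Gamma$-action on $(X,\mu)$ is measure-preserving, for every $g \in \Gamma$ and $f \in \linfz X$ we have $\int_X |f \cdot g|\,d\mu = \int_X |f(g\cdot x)|\,d\mu(x) = \int_X |f|\,d\mu$, so the action is isometric, which is precisely the required compatibility between the norm and the module structure.

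Next, I would identify the norm on $C_*(M, \linfz X) = \linfz X \otimes_{\Z\Gamma} C_*(\ucov M,\Z)$ used to define integral foliated simplicial volume with the $\ell^1$-norm on $C_*(\Gamma, \linfz X) = \linfz X \otimes_{\Z\Gamma} C_*(\Gamma)$ transported via the chain map $c_\Gamma$ of Proposition~\ref{prop:asp}. Unwinding the definitions, the expression $\sum_{j} \int_X |f_j|\,d\mu$ on reduced chains in $\linfz X \otimes_{\Z\Gamma} C_*(\ucov M,\Z)$ is precisely the standard $\ell^1$-norm on a tensor product of the normed $\Z\Gamma$-module $\linfz X$ with a $\Z\Gamma$-basis of $C_*(\ucov M,\Z)$ coming from a fundamental domain; the analogous statement holds for the bar resolution $C_*(\Gamma)$, which is a free $\Z\Gamma$-resolution with distinguished basis. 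Hence both sides of the chain equivalence $c_\Gamma$ carry the same kind of $\ell^1$-norm on coefficients.

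Then Proposition~\ref{prop:asp}, applied to the normed right $\Z\Gamma$-module $(\linfz X, \|\cdot\|_1)$, yields an isometric isomorphism
\[
  c_\Gamma \colon H_*\bigl(M, \linfz X\bigr) \stackrel{\cong}{\longrightarrow} H_*\bigl(\Gamma, \linfz X\bigr)
\]
with respect to the induced $\ell^1$-semi-norms. The only mild subtlety to keep in mind is that we are using the $L^1$-norm on $\linfz X$ (that is, the $L^1$-norm restricted to the subspace of essentially bounded integer-valued functions), not the $L^\infty$-norm; this is consistent with the norm used throughout the excerpt to define $\ifsv{M}^X$, so no additional care is needed. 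I do not expect any real obstacle here: the statement is genuinely a corollary, and the content is that Proposition~\ref{prop:asp} is robust enough to accept the coefficients $\linfz X$.
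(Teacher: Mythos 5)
Your proposal is correct and matches the paper's intent exactly: Corollary~\ref{cor:homology} is stated as an immediate consequence of Proposition~\ref{prop:asp} applied to the normed right $\Z\Gamma$-module $A=\linfz{X}$ with the $\ell^1$-norm $f\mapsto\int_X|f|\,d\mu$, and your verification that the measure-preserving action makes this an isometric module and that the reduced-form norm on $C_*(M,\linfz X)$ is the induced $\ell^1$-norm is precisely the (unwritten) content of the deduction. No issues.
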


\begin{lem}\label{lem:dense}
Let $\Gamma$ be a countable group and 
let $(X,\mu)$ be a standard $\Ga$-space as in Definition~\ref{def:pfc}.
The inclusion 
$
\linfz {X}\hookrightarrow L^1(X,\Z) 
$
has dense image with respect to the $\ell^1$-norm and induces an isometric map
$$H_\ast(\Ga,\linfz {X})\longrightarrow H_\ast(\Ga,L^1(X,\Z)).$$
\end{lem}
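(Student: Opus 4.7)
The plan is to split the lemma into its two assertions and handle each in turn.

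For the density statement, given $f \in L^1(X,\Z)$, I would use the coefficient-wise truncations $f_N := f \cdot \chi_{\{|f| \leq N\}}$ for $N \in \N$. Each $f_N$ is measurable, integer-valued, and bounded (by $N$), so $f_N \in \linfz{X}$; dominated convergence with majorant $|f|$ gives $\lone{f - f_N} = \int_{\{|f| > N\}} |f| \, d\mu \to 0$ as $N \to \infty$.

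One direction of the isometry is essentially automatic: the inclusion $\linfz X \hookrightarrow L^1(X,\Z)$ preserves the $\ell^1$-norm on the nose, hence induces a norm-preserving chain map $C_*(\Ga, \linfz X) \to C_*(\Ga, L^1(X,\Z))$, so the induced map in homology is norm non-increasing. For the reverse inequality, given $\alpha \in H_n(\Ga, \linfz X)$ with image $\bar\alpha$ and $\epsilon > 0$, I would fix any $\linfz X$-cycle $c_0$ representing $\alpha$ and choose an $L^1$-cycle $c \in C_n(\Ga, L^1(X,\Z))$ representing $\bar\alpha$ with $\lone{c} \leq \lone{\bar\alpha} + \epsilon$. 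Since $c$ and $c_0$ represent the same class in $L^1$-coefficient homology, there is an $L^1$-chain $b \in C_{n+1}(\Ga, L^1(X,\Z))$ with $\partial b = c - c_0$; applying coefficient-wise truncation to $b$ via the first part yields $b' \in C_{n+1}(\Ga, \linfz X)$ with $\lone{b - b'}$ as small as desired. The candidate $\linfz X$-cycle representing $\alpha$ is then
\[
c' := c_0 + \partial b' \;\in\; C_n(\Ga, \linfz X),
\]
which is indeed a cycle and represents $\alpha$. Writing $c' = c + \partial(b' - b)$ and using that the bar differential $\partial_{n+1}$ has operator norm at most $n+2$ with respect to the $\ell^1$-norm (there are $n+2$ summands in the boundary formula, each of norm $1$), I obtain
\[
\lone{c'} \leq \lone{c} + (n+2) \cdot \lone{b' - b} \leq \lone{\bar\alpha} + 2\epsilon,
\]
which gives $\lone{\alpha} \leq \lone{\bar\alpha}$ upon letting $\epsilon \to 0$, completing the isometry.

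The key subtlety I expect, and the observation that motivates the whole argument, is that a naive truncation of the coefficients of the cycle $c$ itself would in general destroy the cycle condition; instead, one must truncate the \emph{boundary witness} $b$, and rely on the explicit $\ell^1$-boundedness of the bar differential to push the error through. Everything else is a routine consequence of dominated convergence and of the fact that tensoring the free bar complex over $\Z\Ga$ with the norm-preserving inclusion $\linfz X \hookrightarrow L^1(X,\Z)$ preserves $\ell^1$-norms term by term.
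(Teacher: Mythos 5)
Your proposal is correct and matches the paper's proof in substance: the paper disposes of the lemma by citing the general principle that inclusions of dense subcomplexes (with bounded differentials) induce isometric maps on homology \cite[Lemma~2.9]{mschmidt}\cite[Proposition~1.7]{loehphd}, and your argument --- truncating coefficients for density, then truncating the bounding chain $b$ rather than the cycle and controlling the error via the $\ell^1$-bound $n+2$ on the bar differential --- is precisely the standard proof of that cited principle, written out in this integral setting. No gaps; the only difference is that you make explicit what the paper delegates to a reference.
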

\begin{proof}
  Inclusions of dense subcomplexes induce isometric maps on
  homology~\cite[Lemma~2.9]{mschmidt}\cite[Proposition~1.7]{loehphd}
  (the cited proofs also carry over to this integral setting).
\end{proof}

\begin{rem}\label{rem:l1}
Let $M$, $\Ga$, and $(X,\mu)$ be as in Definition~\ref{def:pfc}.
If $M$ is aspherical, by Corollary~\ref{cor:homology} and Lemma~\ref{lem:dense}
we deduce that the $X$-parametrised simplicial volume of $M$ can be computed
via the $\ell^1$-semi-norm on the group homology $H_\ast(\Ga,L^1(X,\Z))$.
\end{rem}

\subsection{Proportionality principle, general case}
\begin{proof}[Proof of Theorem~\ref{thm:ppgen}]
Let $n\in \N$. Let  $\Ga$ and $\La$ be
fundamental groups of oriented closed connected aspherical 
$n$-manifolds $M$ and $N$ with positive simplicial volume.
Assume that
$(\Omega,\mu)$ is an ergodic bounded ME-coupling of $\Ga$ and $\La$.
Fix  a bounded fundamental domain $X_\Ga\subset \Omega$ 
(resp.~$X_\La\subset \Omega$) of the  $\Ga$-action on 
$\Omega$ (resp.~of the $\La$-action on $\Omega$) and 
let $\alpha_\Ga$ (resp.~$\alpha_\La$) be the associated cocycle (Definition
\ref{cocycle}). 

Using the cocycles associated with the ME-coupling we translate 
para\-metrised fundamental cycles of one manifold into
parametrised fundamental cycles of the other manifold, while
controlling the $\ell^1$-semi-norm.
More precisely:
consider the standard space $(X_\Ga,\mu_\Ga=\mu(X_\Ga)^{-1}{\mu_|}_{X_\Ga})$ 
with the $\La$-action defined in Equation~\eqref{eq:action}.
By Remark~\ref{rem:l1} we use the $\ell^1$-norm  on 
$L^1(X_\Ga,\Z)\otimes_{\Z \La}C_\ast(\La,\Z)$ to estimate the $X_\Ga$-parametrised 
simplicial volume of $N$.
Let  $\La^{\ast+1}\times X_\Ga$ be endowed with
the diagonal $\La$-action and which carries the product
of the counting measure and $\mu_\Ga$. We identify
$L^1(X_\Ga,\Z)\otimes_{\Z \La}C_\ast(\La,\Z)$
with $ L^1(\La^{\ast+1}\times X_\Ga,\Z)^{\text{fin}}_\La,$ where 
the superscript ``fin'' 
indicates the submodule of functions $f: \La^{*+1}\times X_\Ga\rightarrow \Z$ 
with the property that there is a finite subset $F$ of $\La^{*+1}$ such that 
$f$ is supported on $F\times X_\Ga$, and where the subscript $\La$ 
indicates the co-invariants.

Now, let us consider the measurable, countable-to-one, locally measure
preserving (up to a constant factor) map~\cite[p.~284]{bfs}
$$
\begin{array}{rccl}
\phi_n^{(\alpha_\Ga)}: & \La^{n+1}\times X_\Ga& \longrightarrow & \Ga^{n+1}\times X_\La\\
& (\la_0,\dots, \la_n,y)& \longmapsto & \bigl(\alpha_\Ga(\la_0^{-1},y)^{-1},\dots,\alpha_\Ga(\la_n^{-1},y)^{-1}, \La y\cap X_\La\bigr)
\end{array}
$$
and define
$$
\begin{array}{rccl}
 {(\alpha_\Ga)}^\Z_n:& L^1(\La^{n+1}\times X_\Ga,\Z)^{\text{fin}}_\La& \longrightarrow & L^1(\Ga^{n+1}\times X_\La,\Z)^{\text{fin}}_\Ga\\
&f &\longmapsto & (\gamma,x) \mapsto {(\alpha_\Ga)}^\Z_n(f)({\ga},x):=\\
&&&\sum_{({\la},y) \in (\phi_n^{(\alpha_\Ga)})^{-1} ({\ga},x)} f({\la},y).
\end{array}
$$
This map is the restriction to integral chains of the map described by
Bader, Furman and Sauer \cite[Theorem 5.7]{bfs}\footnote{In
  Definition~\ref{def:pfc} we define a right action on $\linfz{X}$
  (and similarly on $L^1(X,\Z)$) unlike \emph{op.~cit.}, in which
  $\linfz{X}$ is endowed with a left action.}  up to rescaling by the
coupling index $c_\Omega=\mu(X_\La)/\mu(X_\Ga)$.  The sum on the right
hand side is a.e.~finite~\cite[Lemma~5.8]{bfs}, and the boundedness of
the coupling guarantees that ${(\alpha_\Ga)}^\Z_n(f)$ has finite
support whenever $f$ has finite support; moreover, a straightforward 
computation shows that the map is well-defined on the level of co-invariants. Hence, for a bounded ME-coupling of
$\Gamma$ and $\Lambda$, ${(\alpha_\Ga)}^\Z_n$ is a well-defined norm
non-increasing (up to rescaling by the coupling index) chain map, and
${(\alpha_\Ga)}^\Z_n$ induces a map
$$H\bigl({(\alpha_\Ga)}^\Z_n\bigr):H_n\bigl(\La,L^1(X_\Ga,\Z)\bigr)\longrightarrow H_n\bigl(\Ga,L^1(X_\La,\Z)\bigr)$$
of norm at most~$1/c_\Omega$.

Let us now consider the diagram in Figure~\ref{fig:homology}.
\begin{figure}
    \begin{center}
      \begin{tikzpicture}
        \def\x{4.5}
        \def\y{-1.2}
        \node (A0_0) at (0*\x, 0*\y) {$H^{\ell^1}_n(\La,L^1(X_\Ga,\R))$};
        \node (A1_0) at (1*\x, 0*\y) {$H^{\ell^1}_n(\Ga,L^1(X_\La,\R))$};
        \node (A0_1) at (0*\x, 1*\y) {$H_n(\La,L^1(X_\Ga,\R))$};
        \node (A1_1) at (1*\x, 1*\y) {$H_n(\Ga,L^1(X_\La,\R))$};
        \node (A0_2) at (0*\x, 2*\y) {$H_n(\La,L^1(X_\Ga,\Z))$};
        \node (A1_2) at (1*\x, 2*\y) {$H_n(\Ga,L^1(X_\La,\Z))$};
        \node (A0_3) at (0*\x, 3*\y) {$H_n(\La,\Z)$};
        \node (A0_4) at (0*\x, 4*\y) {$H_n(N,\Z)$};
        \node (A1_3) at (1*\x, 3*\y) {$H_n(\Ga,\Z)$};
        \node (A1_4) at (1*\x, 4*\y) {$H_n(M,\Z)$}; 
        \path (A0_0) edge [->] node [auto] {$\scriptstyle{H^{\ell^1}({(\alpha_\Ga)}^\R_n)}$} (A1_0);
        \path (A0_1) edge [->] node [auto] {$\scriptstyle{H({(\alpha_\Ga)}^\R_n)}$} (A1_1);
        \path (A0_2) edge [->] node [auto] {$\scriptstyle{H({(\alpha_\Ga)}^\Z_n)}$} (A1_2);
        \path (A0_1) edge [->] node [auto] {$\scriptstyle{i_\La}$} (A0_0);
        \path (A1_1) edge [->] node [auto] {$\scriptstyle{i_\Ga}$} (A1_0);
        \path (A0_2) edge [->] node [auto] {$\scriptstyle{j^\R_\La}$} (A0_1);
        \path (A1_2) edge [->] node [auto] {$\scriptstyle{j^\R_\Ga}$} (A1_1);
        \path (A0_3) edge [->] node [auto] {$\scriptstyle{j_\La}$} (A0_2);
        \path (A0_4) edge [->] node [auto] {$\scriptstyle{c_\La}$} (A0_3);
        \path (A1_3) edge [->] node [auto] {$\scriptstyle{j_\Ga}$} (A1_2);
        \path (A1_4) edge [->] node [auto] {$\scriptstyle{c_\Ga}$} (A1_3);
      \end{tikzpicture}
    \end{center}
    \caption{Effect of the cocycle in homology \label{fig:homology}}
\end{figure}
Since $M$ and $N$ are aspherical manifolds, Proposition~\ref{prop:asp}
ensures that there exist isometric isomorphisms $c_\La$ and $c_\Ga$. 
The maps $j_\La$, $j_\Ga$, $j^\R_\La$, $j^\R_\Ga$ are the usual change of
coefficients homomorphisms. In particular, by ergodicity 
of both actions, the maps $j_\La$ and $j_\Ga$ are (semi)-norm
non-increasing isomorphisms.
The inclusion of singular chains into $\ell^1$-chains induces the isometric 
homomorphisms $i_\La$ and $i_\Ga$.
Finally $c_\Omega \cdot H^{\ell^1}({(\alpha_\Ga)}^\R_n)$ is the isometric isomorphism of
Bader, Furman and Sauer~\cite[Theorem 5.7]{bfs}
defined between $\ell^1$-homology groups.

By  the diagram in Figure~\ref{fig:homology} we have that 
\begin{equation}\label{fund_class}
H({(\alpha_\Ga)}^\R_n)\circ j^\R_\La\circ j_\La\circ c_\La([N]_\Z)=m \cdot
(j_\Ga^\R\circ j_\Ga\circ c_\Ga)([M]_\Z) 
\end{equation}
for some $m\in \Z$; here, we use that the $\Gamma$-action
on~$X_\Lambda$ is ergodic, and so $H_n(\Gamma,L^1(X_\Lambda,\Z)) \cong
\Z$.  By~$\sv{N}>0$ we have $\lone{i_\La \circ j_\La^\R
  \circ j_\La\circ c_\La([N]_\Z)}>0$. Since
$c_\Omega \cdot H^{\ell^1}({(\alpha_\Ga)}^\R_n)$ is an isometry we deduce that
$$H^{\ell^1}({(\alpha_\Ga)}^\R_n)\circ i_\La \circ j_\La^\R \circ j_\La\circ c_\La([N]_\Z)\neq 0. $$
By the commutativity of the top square in Figure~\ref{fig:homology},  
we have $|m|\geq 1$ in Equation $\eqref{fund_class}$, and hence
\begin{align*}
\lone{(j_\Ga^\R\circ j_\Ga\circ c_\Ga)([M]_\Z)}&\leq 
|m|\cdot \lone{(j_\Ga^\R\circ j_\Ga\circ c_\Ga)([M]_\Z)}\\
&=\lone{H({(\alpha_\Ga)}^\R_n)\circ j^\R_\La\circ j_\La\circ c_\La([N]_\Z)}\\
&\leq
c_\Omega^{-1}\cdot \lone{j^\R_\La\circ j_\La\circ c_\La([N]_\Z)}.
\end{align*}
By interchanging the roles of $\Ga$ and $\La$, we obtain the converse inequality
and it turns out that $|m|=1$.

By the commutativity of the diagram in Figure~\ref{fig:homology} it follows:
$$\ifsv{M}^{X_\La}=\bigl\|j_\Ga\circ c_\Ga([M]_\Z)\bigr\|_1=\bigl\|H({(\alpha_\Ga)}^\Z_n)\circ j_\La\circ c_\La([N]_\Z)\bigr\|_1
\leq c_\Omega^{-1}\cdot\ifsv{N}^{X_\Ga},$$
which implies
$$\ifsv{ N}^{X_\Ga}\geq c_\Omega\cdot\ifsv{M}^{X_\La}.$$
Similarly we have the other inequality.

Since the map of Remark~\ref{rem:isom_action}
induces an isometric isomorphism 
between $H_n(\Ga,L^1(X_\La,\Z))$ and
$H_n(\Ga,L^1(\La\backslash\Omega,\Z))$ (similarly for $X_\Ga$),
the previous equality does not depend on the
choice of the fundamental domains:
$$\ifsv{ N}^{\Ga\backslash\Omega}= c_\Omega\cdot\ifsv{M}^{\La\backslash\Omega}.$$ 

Let us now suppose that $(\Omega,\mu)$ is a \emph{mixing} bounded
ME-coupling of $\Ga$ and $\La$.  Let $(X, \mu_X)$ be an ergodic
standard $\Ga$-space, with the structure of $\La$-space with the
trivial action. Then $(X\times\Omega,\mu_X\otimes\mu)$ is an ergodic
bounded ME-coupling of $\Ga$ and $\La$ with respect to the left
diagonal $\Ga$- and $\La$-actions.  Indeed if $F_\Ga$ (resp.~$F_\La$)
is a bounded fundamental domain for the action of $\Ga$ on $\Omega$
(resp.~of $\La$) then $X_\Ga=X\times F_\Ga$ (resp.~$X_\La=X\times
F_\La$) is a finite measure fundamental domain of the $\Ga$-action on
$X\times\Omega$ (resp.~of the $\La$-action). We can easily trace back
the boundedness of $X_\Ga$ (resp.~$X_\La$) to the one of $F_\Ga$
(resp.~$F_\La$).  Finally, by the property of mixing actions the
$\Ga$-action on $\La\backslash(X\times \Omega)\cong X\times
\La\backslash\Omega$ is ergodic and $(X\times\Omega,\mu_X\otimes\mu)$
is an ergodic ME-coupling (see Definition~\ref{def:erg/mix} and
Remark~\ref{ME_ergodicity}).

Applying now the previous construction to
$(X\times\Omega,\mu_X\otimes\mu)$, we have
$$
\ifsv{N}^{\Ga\backslash(X\times \Omega)}=
c_{X\times\Omega}\cdot\ifsv{ M}^{\La\backslash(X \times \Omega)},
$$
where $c_{X\times\Omega}=(\mu_X\otimes \mu)(X\times X_\La)/(\mu_X\otimes \mu)
(X\times X_\Ga)=\mu(X_\La)/\mu(X_\Ga)=c_\Omega$.
Using Proposition~\ref{prop:prodpar} it follows that
$$
\ifsv{N}\leq \ifsv{N}^{\Ga\backslash(X\times \Omega)}=
c_{\Omega}\cdot\ifsv{ M}^{X \times (\La\backslash\Omega)}\leq c_{\Omega}\cdot\ifsv{M}^X,
$$
and by Proposition~\ref{prop:ergpar} it turns out that
$$
\ifsv{N}\leq c_\Omega\cdot\ifsv{M}.
$$
By interchanging the roles of $\Ga$ and $\La$ we obtain the other inequality.
\end{proof}

\subsection{Proportionality principle, hyperbolic case}
For the proof of Theorem~\ref{mainthm} we need a proportionality
principle for hyperbolic manifolds (Corollary~\ref{cor:pphyp}).

\begin{proof}[Proof of Corollary~\ref{cor:pphyp}]
Let $n\in \N$, let $\Gamma$ and $\Lambda$ be uniform lattices in $G :=
\Isom^+(\Hyp^n)$, and $\Hyp^n/\Ga$ and $\Hyp^n/\La$ be the associated
oriented closed connected hyperbolic $n$-manifolds with fundamental
groups $\Ga$ and $\La$.  The group $G$ with its Haar measure $\mu$ is
a bounded ME-coupling with respect to $\Ga$ and $\La$ (see
Examples~\ref{ex:our case} and \ref{rem:our case}).  The ergodicity of
the coupling follows from the Moore Ergodicity Theorem~\cite[Theorem
  III.2.1]{Bekka-Mayer}.  Clearly, this coupling has coupling
index~$\covol(\Lambda)/\covol(\Gamma)$.

Notice that the standard $\Gamma$-space  $\Lambda \backslash G$ 
(where $\Lambda$ acts on~$G$
as in Example~\ref{ex:our case}) is isomorphic to the coset space $G/\La$
with the probability measure induced 
from the Haar measure~$\mu$ and
the left translation $\Gamma$-action; similarly,
the standard $\La$-space $\Ga\backslash G$ with the
$\La$-action induced from Example~\ref{ex:our case} is isomorphic 
to the coset space~$G/\Gamma$ with the left translation $\La$-action.

Hence, Theorem~\ref{thm:ppgen} applied to this ergodic ME-coupling implies:
$$
\ifsv{\Hyp^n/\La}^{G/\Gamma}= \frac{\covol(\La)}{\covol(\Ga)}\cdot\ifsv{\Hyp^n/\Ga}^{G/\Lambda}.
$$

Since the Moore Ergodicity Theorem~\cite[Theorem III.2.1]{Bekka-Mayer}
ensures also that $(G,\mu)$ is a mixing ME-coupling of $\Ga$ and $\La$,
by the second part of Theorem~\ref{thm:ppgen} it turns out that
\[
\ifsv{\Hyp^n/\La}= \frac{\covol(\La)}{\covol(\Ga)}\cdot\ifsv{\Hyp^n/\Ga}.\qedhere
\]
\end{proof}

\section{Comparing integral foliated simplicial volume and\\ stable integral simplicial volume}
\label{sec:ifsvstisv}

We will now compare integral foliated simplicial volume with stable
integral simplicial volume. We start with a general sandwich estimate
(Proposition~\ref{prop:compsv}), and we will then give more refined
estimates for the hyperbolic case and for special parameter spaces. In
particular, we will prove Corollary~\ref{cor:ifsvstisvhyp} and
Theorem~\ref{thm:ifsvstisvgen}.

\subsection{Sandwich estimate for integral foliated simplicial volume}

Integral foliated simplicial volume interpolates between ordinary
simplicial volume and stable integral simplicial volume:

\begin{prop}[comparing simplicial volumes]\label{prop:compsv}
  Let $M$ be an oriented closed connected manifold. Then
  \[ \sv M \leq \ifsv M \leq \stisv M. 
  \]
\end{prop}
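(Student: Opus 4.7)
The left-hand inequality $\sv{M}\leq\ifsv{M}$ is immediate: by Proposition~\ref{prop:compisv} we already have $\sv{M}\leq\ifsv{M}^{X}$ for every standard $\pi_1(M)$-space~$(X,\mu)$, and taking the infimum over all such parameter spaces yields $\sv{M}\leq\ifsv{M}$. So the content of the proposition is the right-hand inequality $\ifsv{M}\leq\stisv{M}$.

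For the right-hand inequality my plan is to exhibit, for every finite covering $p\colon\overline{M}\to M$ of degree~$d$, a parameter space whose associated foliated simplicial volume is bounded above by $\tfrac{1}{d}\cdot\isv{\overline{M}}$. The natural candidate is the finite coset space $\Gamma/\Lambda$, where $\Gamma=\pi_1(M)$ and $\Lambda=\pi_1(p)(\pi_1(\overline{M}))$ is the index-$d$ subgroup associated with the covering, endowed with the translation action and the normalised counting measure. This is exactly the situation covered by Corollary~\ref{cor:cosetpar}, which gives
\[
\ifsv{M}^{\Gamma/\Lambda}=\frac{1}{d}\cdot\isv{\overline{M}}.
\]

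Since $\ifsv{M}$ is by definition the infimum of $\ifsv{M}^{X}$ over all isomorphism classes of standard $\Gamma$-spaces, in particular
\[
\ifsv{M}\leq\ifsv{M}^{\Gamma/\Lambda}=\frac{1}{d}\cdot\isv{\overline{M}}.
\]
Taking the infimum over all finite coverings~$\overline{M}\to M$ then gives $\ifsv{M}\leq\stisv{M}$, as desired.

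There is no serious obstacle here: both inequalities are straightforward consequences of material already established, namely Proposition~\ref{prop:compisv} (comparison with ordinary and integral simplicial volume) and Corollary~\ref{cor:cosetpar} (which itself rests on the induction/restriction machinery culminating in Theorem~\ref{thm:ifsvmult}). An equivalent route, avoiding a direct appeal to Corollary~\ref{cor:cosetpar}, would be to apply Theorem~\ref{thm:ifsvmult} to get $\ifsv{M}=\tfrac{1}{d}\cdot\ifsv{\overline{M}}$ and then bound $\ifsv{\overline{M}}\leq\isv{\overline{M}}$ via Proposition~\ref{prop:compisv} (or Example~\ref{exa:trivialpar}); the conclusion again follows by taking the infimum over finite coverings.
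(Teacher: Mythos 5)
Your proof is correct and follows exactly the paper's own argument: the first inequality from Proposition~\ref{prop:compisv}, and the second from Corollary~\ref{cor:cosetpar} by taking the infimum over finite coverings. The paper even notes the same alternative route via Theorem~\ref{thm:ifsvmult} combined with Proposition~\ref{prop:compisv} that you mention at the end.
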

\begin{proof}
  The first inequality is contained in
  Proposition~\ref{prop:compisv}. The second inequality follows from
  Corollary~\ref{cor:cosetpar} (alternatively: from
  Proposition~\ref{prop:compisv} and Theorem~\ref{thm:ifsvmult}).
\end{proof}

\begin{exa}[integral foliated simplicial volume of surfaces]
  Let $M$ be an oriented closed connected surface of genus~$g(M)$. Then 
  \[ \ifsv M = 
     \begin{cases}
       2 & \text{if $M \cong S^2$}\\
       \sv M = \stisv M = 4 \cdot g(M) - 4& \text{otherwise}.
     \end{cases}
  \]
 Indeed, because $S^2$ is simply connected, $\ifsv{S^2} = \isv {S^2} =
 2$. If $M \not\cong S^2$, then the classical computation of
 simplicial volume of aspherical surfaces~\cite[Section~0.2]{Gromov} shows that $\sv
 M = \stisv M = 4 \cdot g(M) - 4$, and so the claim follows from
 Proposition~\ref{prop:compsv}. 
\end{exa}

Hence, in the case of finite fundamental group, we have:

\begin{cor}[finite fundamental group]
  \label{cor:finitepi1}
  Let $M$ be an oriented closed connected manifold with finite fundamental group. Then 
  \[ \ifsv M  = \frac1{|\pi_1(M)|} \cdot \isv[norm]{\ucov M} = \stisv M, 
  \]
  where $\ucov M$ is the universal covering of~$M$. In contrast, $\sv M = 0$.
\end{cor}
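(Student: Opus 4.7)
Write $d := |\pi_1(M)|$. Since $\pi_1(M)$ is finite, the universal covering~$\pi \colon \ucov M \to M$ is a $d$-sheeted covering of oriented closed connected manifolds. The plan is to chain together three facts already available in the excerpt: multiplicativity of~$\ifsv{\cdot}$ under finite coverings, the behaviour of~$\ifsv{\cdot}$ on simply connected manifolds, and the sandwich~$\ifsv M \leq \stisv M$.

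First, applying Theorem~\ref{thm:ifsvmult} to the covering~$\pi$ gives
\[ \ifsv M = \frac1d \cdot \ifsv{\ucov M}.
\]
Since $\ucov M$ is simply connected, Example~\ref{exa:trivialpar} yields $\ifsv{\ucov M} = \isv{\ucov M}$, and hence
\[ \ifsv M = \frac 1d \cdot \isv{\ucov M}.
\]
For the stable integral simplicial volume, the covering~$\pi$ itself is a witness in the infimum defining~$\stisv M$, which gives
\[ \stisv M \leq \frac 1d \cdot \isv{\ucov M} = \ifsv M.
\]
The reverse inequality $\ifsv M \leq \stisv M$ is exactly Proposition~\ref{prop:compsv}, so both quantities coincide with~$\frac 1d \cdot \isv{\ucov M}$.

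Finally, the vanishing $\sv M = 0$ follows because the finite cover~$\ucov M$ is simply connected, whence $\sv{\ucov M} = 0$ (Gromov's vanishing theorem for closed manifolds with amenable, in particular trivial, fundamental group), and by multiplicativity of the classical simplicial volume under finite coverings we have $\sv M = \sv{\ucov M}/d = 0$. There is no real obstacle here: the only point worth emphasising is that Example~\ref{exa:trivialpar} genuinely applies to~$\ucov M$ (it is simply connected, so every parameter space collapses to the trivial one), which is precisely what makes the finite-fundamental-group case behave rigidly with integral coefficients.
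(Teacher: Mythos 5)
Your proposal is correct and follows essentially the same route as the paper: multiplicativity of $\ifsv{\cdot}$ under finite coverings (Theorem~\ref{thm:ifsvmult}), the identification $\ifsv{\ucov M} = \isv[norm]{\ucov M}$ for the simply connected cover (Example~\ref{exa:trivialpar}), and the sandwich with $\stisv M$ via Proposition~\ref{prop:compsv}. The only cosmetic difference is in the last step: the paper invokes Gromov's vanishing theorem for amenable fundamental groups directly on~$M$, whereas you apply it to~$\ucov M$ and transfer back by multiplicativity of~$\sv{\cdot}$ under finite coverings --- both are fine.
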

\begin{proof}
  The universal covering~$\ucov M \longrightarrow M$ has
  $|\pi_1(M)|$~sheets; because $\pi_1(M)$ is finite, $\ucov M$ indeed
  is an oriented closed connected manifold. Hence, Theorem~\ref{thm:ifsvmult} gives 
  us
  \[ \ifsv M = \frac1{|\pi_1(M)|} \cdot \ifsv {\ucov M}. 
  \]
  Moreover, $\ifsv {\ucov M} = \isv[norm] {\ucov M}$ because $\ucov M$
  is simply connected (Example~\ref{exa:trivialpar}). So, in
  combination with Proposition~\ref{prop:compsv}, we obtain
  \[ \stisv M \leq \frac1{|\pi_1(M)|} \cdot \isv[norm]{\ucov M} 
     = \ifsv M \leq \stisv M. 
  \]
  On the other hand, simplicial volume of manifolds with finite (more
  generally, amenable) fundamental group is zero~\cite{Gromov}, and
  so~$\sv M = 0$.
\end{proof}

\subsection{Proof of  Corollary~\ref{cor:ifsvstisvhyp}}

We will now prove Corollary~\ref{cor:ifsvstisvhyp} with help of the
proportionality principle for hyperbolic manifolds
(Corollary~\ref{cor:pphyp}). More precisely, we will prove the following, 
slightly more general, statement:

\begin{thm}\label{thm:ifsvstisvhyppar}
  Let $n\in \N$, and let $M$ and $N$ be oriented closed connected
  hyperbolic $n$-manifolds with fundamental groups~$\Gamma$ and
  $\Lambda$, and let $G := \Isom^+ (\Hyp^n)$. Let $S$ be a set of
  representatives of uniform lattices in~$G$, containing a
  representative for every isometry class of oriented closed connected
  hyperbolic $n$-manifolds. The product~$\prod_{\Lambda' \in S}
  G/\Lambda'$ is a standard $\Gamma$-space with respect to the diagonal
  translation action and the product of the probability measures
  induced by the (bi-invariant) Haar measure on~$G$.  Then
  \[ \ifsv M \leq \ifsv M ^{\prod_{\Lambda' \in S} G/\Lambda'} \leq \frac{\vol(M)}{\vol(N)} \cdot \stisv N. 
  \]
\end{thm}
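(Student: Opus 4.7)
The first inequality $\ifsv M \leq \ifsv M^{\prod_{\Lambda' \in S} G/\Lambda'}$ is immediate from the definition of $\ifsv M$ as an infimum over all standard $\Gamma$-spaces.

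For the second inequality, the plan is to project onto a single well-chosen factor of the product, apply the hyperbolic proportionality principle to swap the roles of $M$ and a finite cover of $N$, and then bound the result by integral simplicial volume of the cover. Concretely, let $p \colon \overline N \to N$ be any $d$-sheeted covering. Since $\Lambda$ is torsion-free, the image $\pi_1(p)(\pi_1(\overline N)) \subset \Lambda$ is again a torsion-free uniform lattice in $G$, and the isometry class of $\overline N$ is represented by some $\overline \Lambda \in S$. Proposition~\ref{prop:prodpar}~(1) applied to the factor indexed by $\overline \Lambda$ gives
\[
  \ifsv M^{\prod_{\Lambda' \in S} G/\Lambda'} \leq \ifsv M^{G/\overline \Lambda}.
\]
Corollary~\ref{cor:pphyp} applied to the uniform lattices $\Gamma$ and $\overline \Lambda$ yields
\[
  \frac{\ifsv{M}^{G/\overline \Lambda}}{\covol(\Gamma)} = \frac{\ifsv{\Hyp^n/\overline\Lambda}^{G/\Gamma}}{\covol(\overline\Lambda)},
\]
and Proposition~\ref{prop:compisv} provides $\ifsv{\Hyp^n/\overline\Lambda}^{G/\Gamma} \leq \isv{\overline N}$. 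Combining these with $\covol(\overline\Lambda) = d \cdot \covol(\Lambda)$ and the standard identity $\covol(\Gamma)/\covol(\Lambda) = \vol(M)/\vol(N)$ gives
\[
  \ifsv M^{\prod_{\Lambda' \in S} G/\Lambda'} \leq \frac{\vol(M)}{\vol(N)} \cdot \frac{\isv{\overline N}}{d}.
\]
Taking the infimum over all finite coverings $\overline N \to N$ produces the factor $\stisv N$ on the right-hand side, as desired.

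The main subtlety is the passage between the subgroup $\pi_1(p)(\pi_1(\overline N)) \subset \Lambda$ actually associated with a given cover and its representative $\overline \Lambda \in S$: these need only be conjugate in $G$, but conjugation preserves covolume as well as the isometry class of the quotient, so $\isv{\Hyp^n/\overline\Lambda} = \isv{\overline N}$ and covolumes agree. Beyond this bookkeeping, the argument is a direct assembly of results already established in Sections~\ref{sec:ifsv} and~\ref{sec:pp}, and no genuinely new estimates are required.
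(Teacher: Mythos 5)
Your proof is correct and follows essentially the same route as the paper: both project the product parameter space onto the single factor corresponding to (the $S$-representative of) the fundamental group of a finite cover $\overline N \to N$, apply Corollary~\ref{cor:pphyp} to swap $M$ and $\overline N$, bound $\ifsv{\overline N}^{G/\Gamma}$ by $\isv{\overline N}$ via Proposition~\ref{prop:compisv}, and take the infimum over all finite covers. Your explicit remark on the conjugacy bookkeeping between the actual subgroup $\pi_1(p)(\pi_1(\overline N))$ and its representative in $S$ is a point the paper leaves implicit, but it does not alter the argument.
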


Notice that in every dimension up to isometry there are only countably
many different oriented closed connected hyperbolic manifolds. Hence,
$S$ in the previous theorem is countable, and so the
product~$\prod_{\Lambda' \in S} G/\Lambda'$ indeed is a standard
$\Gamma$-space.

\begin{proof}
  We can view $\Gamma$ and $\Lambda$ as uniform
  lattices in~$G =\Isom^+(\Hyp^n)$ and we have $M = \Hyp^n/\Gamma$ and
  $\covol(\Gamma) = \vol(M)$ (and similarly
  for~$N$ and its finite coverings). 

  Let $N' \longrightarrow N$ be a finite covering of~$N$, let
  $\Lambda'$ be the fundamental group of~$N'$, and let $d := [\Lambda
    : \Lambda']$ be the number of sheets of this covering. From the
  proportionality principle (Corollary~\ref{cor:pphyp}),
  Proposition~\ref{prop:prodpar}, and Proposition~\ref{prop:compisv}
  we obtain
  \begin{align*}
    \ifsv M & = \ifsv {\Hyp^n/\Gamma} 
             \leq \ifsv{\Hyp^n/\Gamma}^{\prod_{\Lambda''\in S} G/\Lambda''}
             \leq \ifsv{\Hyp^n/\Gamma}^{G/\Lambda'}\\ 
            & = \frac{\covol(\Gamma)}{\covol(\Lambda')} \cdot \ifsv{\Hyp^n/\Lambda'}^{G/\Gamma}
              = \frac{\covol(\Gamma)}{\covol(\Lambda)} \cdot \frac1{[\Lambda:\Lambda']}   
              \cdot \ifsv{N'}^{G/\Gamma}\\
            & \leq \frac{\vol(M)}{\vol(N)} \cdot \frac1d 
              \cdot \isv{N'}.
  \end{align*}
  Taking the infimum over all finite coverings of~$N$ finishes the proof.
\end{proof}

This concludes the proof of Corollary~\ref{cor:ifsvstisvhyp}.

\subsection{Proof of Theorem{~\ref{thm:ifsvstisvgen}}}

\begin{defi}\label{def:prodpar}
  Let $\Gamma$ be a finitely generated group and let $S$ be a set of finite index
  subgroups of~$\Gamma$ (hence, $S$ is countable). Then the standard $\Gamma$-space~$X_{\Gamma, S}$ 
  is given by the set
  \[ \prod_{\Lambda \in S} \Gamma/\Lambda,
  \]
  equipped with the product measure of the normalised counting measures 
  and the diagonal left $\Gamma$-action given by translating cosets.
\end{defi}

Notice that in the situation of Definition~\ref{def:prodpar} the
$\Gamma$-action on~$X_{\Gamma,S}$ is free if and only if
$\bigcap_{\Lambda \in S} \Lambda = \{e\}$, and that the $\Gamma$-action 
is not ergodic in general.

\begin{thm}[products of coset spaces as parameter space]
  \label{thm:prodcosetpar}
  Let $M$ be an oriented closed connected manifold with fundamental
  group~$\Gamma$, let $S$ be a set of finite index subgroups
  of~$\Gamma$ that is stable under finite intersections. For~$\Lambda
  \in S$ we denote the covering space of~$M$ associated with the
  subgroup~$\Lambda \subset \Gamma$ by~$M_\Lambda$.
  \begin{enumerate}
  \item
    Then 
    \[ \ifsv{M}^{X_{\Gamma, S}} = \inf_{\Lambda \in S} \frac1{[\Gamma : \Lambda]} \cdot \isv{M_\Lambda} 
    \]
  \item In particular: If $S$ is the set of all finite index subgroups of~$\Gamma$, 
    then
    \[ \ifsv{M}^{X_{\Gamma, S}} = \stisv M. 
    \]
  \end{enumerate}
\end{thm}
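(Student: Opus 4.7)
The plan is to prove the two inequalities of part~(1) separately; part~(2) will then follow at once, since the set of all finite-index subgroups of~$\Gamma$ is automatically stable under finite intersections and for this choice the infimum $\inf_{\Lambda \in S} \frac{1}{[\Gamma:\Lambda]} \cdot \isv{M_\Lambda}$ is by definition equal to~$\stisv M$. For the upper bound $\ifsv{M}^{X_{\Gamma,S}} \leq \inf_{\Lambda \in S} \frac{1}{[\Gamma:\Lambda]} \cdot \isv{M_\Lambda}$, the projection $X_{\Gamma,S} \to \Gamma/\Lambda$ onto the $\Lambda$-factor is $\Gamma$-equivariant and pushes the product measure forward to the normalised counting measure on~$\Gamma/\Lambda$, so Proposition~\ref{prop:prodpar}~(1) together with Corollary~\ref{cor:cosetpar} gives $\ifsv{M}^{X_{\Gamma,S}} \leq \ifsv{M}^{\Gamma/\Lambda} = \frac{1}{[\Gamma:\Lambda]} \cdot \isv{M_\Lambda}$, and taking the infimum over~$\Lambda \in S$ finishes this direction.

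For the lower bound I would first reduce to cylinder fundamental cycles: any $X_{\Gamma,S}$-parametrised fundamental cycle can be approximated in $\ell^1$-norm by one whose coefficients lie in the image of $\linfz{X_{\Gamma,S'}} \hookrightarrow \linfz{X_{\Gamma,S}}$ for some finite $S' \subset S$, because integer-valued bounded measurable functions on an infinite product of finite discrete probability spaces are $\ell^1$-limits of integer-valued cylinder functions (approximate the level sets by cylinder sets and round). By stability under finite intersections we may enlarge~$S'$ to contain $\Lambda_0 := \bigcap_{\Lambda \in S'} \Lambda \in S$. The finite discrete $\Gamma$-space~$X_{\Gamma,S'}$ decomposes into finitely many $\Gamma$-orbits~$O$, each $\Gamma$-equivariantly isomorphic to a coset space~$\Gamma/H_O$ for the stabilizer $H_O$ of a chosen point of~$O$. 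Via the canonical chain isomorphism $\linfz{\Gamma/H_O} \otimes_{\Z\Gamma} C_*(\ucov M,\Z) \cong C_*(M_{H_O},\Z)$ and the $\Z\Gamma$-decomposition $\linfz{X_{\Gamma,S'}} = \bigoplus_O \linfz O$, the approximating cycle splits as a sum of integral fundamental cycles of the covers~$M_{H_O}$. Iterating Proposition~\ref{prop:convcombpar} then yields the convex-combination formula
\[ \ifsv{M}^{X_{\Gamma,S'}} = \sum_O \mu(O) \cdot \ifsv{M}^{\Gamma/H_O} = \sum_O \mu(O) \cdot \frac{1}{[\Gamma:H_O]} \cdot \isv{M_{H_O}}. \]

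The main obstacle is to bound this convex combination from below by~$\inf_{\Lambda \in S} \frac{1}{[\Gamma:\Lambda]} \cdot \isv{M_\Lambda}$: the orbit stabilizers $H_O = \bigcap_{\Lambda \in S'} g_\Lambda \Lambda g_\Lambda^{-1}$ involve conjugates of subgroups in~$S$, which need not themselves lie in~$S$ under mere intersection-closure. For part~(2) this obstruction disappears because $S$ then contains \emph{all} finite-index subgroups of~$\Gamma$, so each~$H_O$ lies in~$S$ and the convex combination is bounded below by $\stisv M$; combined with the upper bound this proves $\ifsv{M}^{X_{\Gamma,S}} = \stisv M$. For part~(1) in full generality the plan is to compare each term to a value attained on~$S$ via the submultiplicativity $\isv{M_{H_O}} \leq [\Lambda:H_O] \cdot \isv{M_\Lambda}$ (obtained by lifting fundamental cycles along the finite cover $M_{H_O} \to M_\Lambda$) for suitable $\Lambda \in S$ and then to pass to the limit $S' \to S$ along the cofinal directed family of intersection-subgroups supplied by the intersection-closure hypothesis.
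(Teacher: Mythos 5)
Your upper-bound argument and the overall shape of your lower-bound argument (reduce to finitely many factors by an $\ell^1$-density argument, then decompose the resulting finite $\Gamma$-set into orbits and apply the convex-combination formula together with Corollary~\ref{cor:cosetpar}) coincide with the paper's proof. Two remarks on the comparison. First, in the reduction step the paper does not approximate individual cycles directly but passes through the dense subcomplex $L=\bigcup_{F}L_F\subset\linfz{X_{\Gamma,S}}$ and invokes the fact that inclusions of dense subcomplexes induce isometries on homology; you should route your argument the same way, since an $\ell^1$-approximation of a parametrised fundamental cycle by a cylinder chain need not itself be a cycle, let alone a fundamental one. Second, your description of the orbit structure is the accurate one: the stabiliser of $(g_\Lambda\Lambda)_{\Lambda\in F}$ is $\bigcap_{\Lambda\in F}g_\Lambda\Lambda g_\Lambda^{-1}$, an intersection of \emph{conjugates}. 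The paper instead asserts that $X_{\Gamma,F}$ is a disjoint union of copies of $\Gamma/\bigcap_{\Lambda\in F}\Lambda$, but its displayed equivalence only verifies this for the orbit of the basepoint $(\Lambda)_{\Lambda\in F}$; already for $\Gamma=S_3$ and $F$ consisting of two distinct conjugate subgroups of order~$2$ the product $\Gamma/\Lambda_1\times\Gamma/\Lambda_2$ has orbits of sizes $6$ and~$3$, so not all orbits are copies of $\Gamma/(\Lambda_1\cap\Lambda_2)$. You have therefore correctly located the delicate point, and your treatment of part~(2) --- each orbit contributes $\mu(O)\cdot\frac{1}{[\Gamma:H_O]}\cdot\isv{M_{H_O}}\geq\mu(O)\cdot\stisv M$ because every $H_O$ has finite index --- is complete and in fact repairs this step.

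For part~(1), however, your proposal has a genuine gap, and the proposed fix runs in the wrong direction. You need a \emph{lower} bound $\frac{1}{[\Gamma:H_O]}\cdot\isv{M_{H_O}}\geq\inf_{\Lambda\in S}\frac{1}{[\Gamma:\Lambda]}\cdot\isv{M_\Lambda}$ for each orbit stabiliser~$H_O$, but the submultiplicativity $\isv{M_{H_O}}\leq[\Lambda:H_O]\cdot\isv{M_\Lambda}$ (for $H_O\leq\Lambda$, obtained by lifting cycles) yields $\frac{1}{[\Gamma:H_O]}\cdot\isv{M_{H_O}}\leq\frac{1}{[\Gamma:\Lambda]}\cdot\isv{M_\Lambda}$, i.e., an \emph{upper} bound on each orbit's contribution --- exactly the wrong inequality for bounding the convex combination from below. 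Since $H_O$ is in general a proper subgroup of a conjugate of $\bigcap_{\Lambda\in F}\Lambda$ and need not lie in~$S$ (closure under intersections does not give closure under intersections of conjugates), the vague ``passage to the limit along the cofinal family'' does not resolve this. Note that the paper's own proof of part~(1) relies on the incorrect orbit description at precisely this point, so to establish part~(1) in the stated generality one would need either an additional hypothesis (for instance, $S$ closed under conjugation, or consisting of normal subgroups, in which case all orbit stabilisers do lie in~$S$) or a genuinely new argument; as it stands your proposal establishes only part~(2), which is the statement of Theorem~\ref{thm:ifsvstisvgen} and all that is used elsewhere in the paper.
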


The second part is nothing but Theorem~\ref{thm:ifsvstisvgen}.

\begin{proof}
  By definition of the stable integral simplicial volume and because
  finite intersections of finite index subgroups of~$\Gamma$ have
  finite index in~$\Gamma$, it suffices to prove the first part.

  We first show that the right hand side is an upper bound for the
  left hand side:
  For all~$\Lambda \in S$ we have 
  \[ \ifsv M ^{X_{\Gamma,S}} \leq \ifsv M^{\Gamma/\Lambda} 
                        = \frac1{[\Gamma : \Lambda]} \cdot \isv{M_\Lambda} 
  \]
  by Proposition~\ref{prop:prodpar} and
  Corollary~\ref{cor:cosetpar}. Taking the infimum yields
  \[ \ifsv{M}^{X_{\Gamma, S}} 
     \leq \inf_{\Lambda \in S} \frac1{[\Gamma : \Lambda]} \cdot \isv{M_\Lambda}.
  \]

  It remains to show that the left hand side also is an upper bound
  for the right hand side: The main idea is to reduce the parameter 
  space~$X_{\Gamma, S}$ to finite products of coset spaces. So, let $F(S)$ 
  be the set of finite subsets of~$S$. For~$F \in F(S)$ we write $\sigma_F$ 
  for the $\sigma$-algebra of the finite product~$X_{\Gamma, F}$ (which is
  just the power set of~$X_{\Gamma,F}$), we write
  \[ \pi_F \colon X_{\Gamma, S} \longrightarrow X_{\Gamma, F} 
  \]
  for the canonical projection, and we define
  \[ L_F := \bigl\{ f \circ \pi_F \bigm| f \in \linfz{X_{\Gamma,F}} \bigr\}
     \subset \linfz{X_{\Gamma,S}}.
  \]

  As first step, we will determine~$\ifsv M^{X_{\Gamma,F}}$ for~$F \in F(S)$: 
  For all~$\gamma, \gamma' \in \Gamma$ we have 
  \[ \bigl(\fa{\Lambda \in F} 
     \gamma \cdot \Lambda = \gamma' \cdot \Lambda
     \bigr)
     \Longleftrightarrow
     \gamma \cdot \bigcap_{\Lambda \in F} \Lambda 
     = \gamma' \cdot \bigcap_{\Lambda \in F} \Lambda.
  \]
  Hence, $X_{\Gamma, F} = \prod_{\Lambda \in F} \Gamma/\Lambda$ is, as
  a $\Gamma$-parameter space, a finite convex combination of multiple
  copies of the coset space~$\Gamma/\bigcap_{\Lambda \in F} \Lambda$. In 
  view of Proposition~\ref{prop:convcombpar} we obtain
  \[ \ifsv M^{X_{\Gamma, F}} = \ifsv M^{\Gamma/\bigcap_{\Lambda \in F} \Lambda}. 
  \]

  As second step, we will now show that 
  \[ L := \bigcup_{F \in F(S)} L_F \subset \linfz{X_{\Gamma, S}}
  \]
  is $\lone{\cdot}$-dense in~$\linfz{X_{\Gamma,S}}$: The
  $\sigma$-algebra~$\sigma$ on the product space~$X_{\Gamma,S}$ is the
  product $\sigma$-algebra of the power sets of all
  factors~$\Gamma/\Lambda$ with~$\Lambda \in S$; i.e., $\sigma$ is
  generated by~$\bigcup_{F \in F(S)} \pi_F^{-1}(\sigma_F)$. We now
  consider the system
  \[ \sigma' := \bigl\{ A \in \sigma \bigm| \chi_A \in \overline{L}^{\lone{\cdot}}
                \bigr\} 
  \]
  of subsets of~$X_{\Gamma, S}$. In order to show that $L$ is
  $\lone\cdot$-dense in~$\linfz{X_{\Gamma,S}}$ it suffices to prove
  that $\sigma' = \sigma$: It is easy to check that $\sigma'$ is a
  $\sigma$-algebra on~$X_{\Gamma, S}$.  
  Moreover, by definition,~$\bigcup_{F \in F(S)} \pi_F^{-1} (\sigma_F)
  \subset \sigma'$. Hence, $\sigma' = \sigma$, and so
  $L$ is $\lone{\cdot}$-dense in~$\linfz{X_{\Gamma,S}}$. 

  Therefore, the map
  \[ H_*(M,L) \longrightarrow H_*\bigl(M,\linfz{X_{\Gamma, S}}\bigr)
  \]
  induced by the inclusion $L \hookrightarrow \linfz{X_{\Gamma,S}}$ of
  coefficient $\Z \Gamma$-modules is
  isometric~\cite[Lemma~2.9]{mschmidt}\cite[Proposition~1.7]{loehphd}
  (the cited proofs carry over to this integral setting) and the
  $X_{\Gamma,S}$-fundamental class of~$M$ is contained in the image;
  notice that the union~$L = \bigcup_{F \in F(S)} L_F$ indeed is a $\Z
  \Gamma$-submodule of~$\linfz{X_{\Gamma,S}}$.

  Let $c \in C_*(M,L)$ be an $L$-fundamental cycle of~$M$. By definition
  of~$L$, there exists an~$F \in F(S)$ such that $c \in C_*(M;L_F)$. 
  Thus,
  \[ \ifsv c ^L
     \geq \ifsv M^{X_{\Gamma, F}}
     =    \ifsv M^{\Gamma/\bigcap_{\Lambda \in F} \Lambda}
     \geq \inf_{\Lambda \in S} \ifsv M^{\Gamma/\Lambda}
     = \inf_{\Lambda \in S} \frac 1{[\Gamma:\Lambda]} \cdot \isv {M_\Lambda};
  \]
  the first inequality is a consequence of the isometric isomorphism~$L_F
  \longrightarrow \linfz{X_{\Gamma,F}}$ induced from the projection~$\pi_F$, 
  the second equality was shown in the first step, the third inequality 
  holds because $S$ is assumed to be closed under finite intersections, 
  and the last equality follows from Corollary~\ref{cor:cosetpar}. 
  Taking the infimum over all fundamental cycles and taking the isometry
  $H_*(M;L) \longrightarrow H_*\bigl(M;\linfz{X_{\Gamma, S}}\bigr)$ into account 
  gives the desired estimate
  \[  \ifsv M ^{X_{\Gamma,S}} 
      \geq \inf_{\Lambda \in S}
           \frac1{[\Gamma : \Lambda]} \cdot \isv{M_\Lambda}.
      \qedhere
  \]
\end{proof}

\begin{rem}[inverse limits]
  The proof of Theorem~\ref{thm:prodcosetpar} carries over to the
  following modification: Instead of the product~$\prod_{\Lambda \in
    S} \Gamma/\Lambda$ we can also consider the inverse limit~$I_{\Gamma,S}$ of
  the system~$(\Gamma/\Lambda)_{\Lambda \in S}$ with respect to the
  canonical projection maps between coset spaces of nested subgroups; we 
  equip~$I_{\Gamma,S}$ with the $\Gamma$-action induced from the translation action 
  on the coset spaces and we equip~$I_{\Gamma,S}$ with the Borel probability space 
  structure given by the discrete $\sigma$-algebras on the coset spaces and the 
  compatible system of normalised counting measures on the coset spaces. 
  We then obtain
  \[  \ifsv M ^{I_{\Gamma,S}} = \inf_{\Lambda \in S} \frac1{[\Gamma:\Lambda]} \cdot \isv {M_\Lambda}.
  \]
  While this inverse limit space might be harder to visualise than the 
  product space~$X_{\Gamma,S}$, it does have the advantage that it is ergodic.
\end{rem}

\section{Integral foliated simplicial volume\\ of hyperbolic $3$-manifolds}\label{sec:mainthm}

We will now complete the proof of Theorem~\ref{mainthm} by proving the 
following, more explicit, version:

\begin{thm}[integral foliated simplicial volume of hyperbolic $3$-manifolds]
  Let $M$ be an oriented closed connected hyperbolic $3$-manifold, and
  let $S$ be a set of representatives of uniform lattices in~$G :=
  \Isom^+(\Hyp^3)$ containing a representative for every isometry
  class of oriented closed connected hyperbolic
  $3$-manifolds (see Theorem~\ref{thm:ifsvstisvhyppar}). 
  Then 
  \[ \ifsv M = \ifsv M ^{\prod_{\Lambda \in S} G/\Lambda} = \sv M. 
  \]
\end{thm}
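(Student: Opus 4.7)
The plan is to obtain the chain of inequalities
\[
  \sv M \leq \ifsv M \leq \ifsv M^{\prod_{\Lambda \in S} G/\Lambda} \leq \sv M
\]
and read off the equalities. The first inequality is just Proposition~\ref{prop:compsv}, and the second is a special case of the general fact that $\ifsv M$ is an infimum over parameter spaces (so it is bounded above by $\ifsv M^X$ for any standard $\Gamma$-space~$X$). All the substance is therefore in the last inequality, and this is where I would invest the work.

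The key point is that Theorem~\ref{thm:ifsvstisvhyppar} already gives the bound
\[
  \ifsv M^{\prod_{\Lambda' \in S} G/\Lambda'} \leq \frac{\vol(M)}{\vol(N)} \cdot \stisv N
\]
for \emph{every} oriented closed connected hyperbolic $3$-manifold~$N$ (since every such $N$ is represented by some $\Lambda' \in S$). So the idea is to let $N$ range over a sequence that realises the ratio $\stisv N / \sv N$ as close to~$1$ as desired, which is exactly what Theorem~\ref{thm:hyp3stisv} provides in dimension~$3$. This is the step that fails in higher dimensions, and it is the reason the whole argument is specific to dimension~$3$.

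Concretely, I would take the sequence~$(M_n)_{n\in\N}$ of oriented closed connected hyperbolic $3$-manifolds from Theorem~\ref{thm:hyp3stisv}, for which $\stisv{M_n}/\sv{M_n} \to 1$. Applying Theorem~\ref{thm:ifsvstisvhyppar} with~$N = M_n$ and using the proportionality $\sv{X} = \vol(X)/v_3$ for every hyperbolic $3$-manifold~$X$,
\[
  \ifsv M^{\prod_{\Lambda' \in S} G/\Lambda'}
    \leq \frac{\vol(M)}{\vol(M_n)} \cdot \stisv{M_n}
    = \sv M \cdot \frac{\stisv{M_n}}{\sv{M_n}}.
\]
Letting $n \to \infty$ yields $\ifsv M^{\prod_{\Lambda' \in S} G/\Lambda'} \leq \sv M$, closing the sandwich and giving $\ifsv M = \ifsv M^{\prod_{\Lambda' \in S} G/\Lambda'} = \sv M = \vol(M)/v_3$.

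The main obstacle is conceptual rather than technical: essentially all the hard work has been done beforehand. The nontrivial ingredients are (a)~the proportionality principle for integral foliated simplicial volume for hyperbolic manifolds (Corollary~\ref{cor:pphyp}), which feeds into Theorem~\ref{thm:ifsvstisvhyppar} and allows one to borrow fundamental cycles from an auxiliary hyperbolic manifold; and (b)~the construction of hyperbolic $3$-manifolds with stable integral simplicial volume asymptotically equal to their simplicial volume (Theorem~\ref{thm:hyp3stisv}), via Dehn filling on the $5$-chain link complement. Once both are in place the proof is a one-line asymptotic estimate.
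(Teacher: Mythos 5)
Your argument is correct and follows essentially the same route as the paper: sandwiching $\ifsv M^{\prod_{\Lambda\in S}G/\Lambda}$ between $\sv M$ and $\frac{\vol(M)}{\vol(M_n)}\cdot\stisv{M_n}$ via Theorem~\ref{thm:ifsvstisvhyppar}, then using the Dehn-filling sequence of Theorem~\ref{thm:hyp3stisv} together with the classical proportionality principle to let the upper bound converge to $\sv M$. No gaps.
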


\begin{proof}
  Let $M$ be an oriented closed connected hyperbolic $3$-manifold. In
  view of Theorem~\ref{thm:hyp3stisv} there exists a
  sequence~$(M_n)_{n \in \N}$ of oriented closed connected hyperbolic
  $3$-manifolds with
  \[ \lim_{n \rightarrow \infty} \frac{\stisv{M_n}}{\sv {M_n}} = 1. 
  \]
  By Corollary~\ref{cor:ifsvstisvhyp} (and
  Theorem~\ref{thm:ifsvstisvhyppar}, for the concrete parameter
  space), for all~$n\in\N$ we have
  \[ \sv M
     \leq \ifsv M \leq \ifsv M ^{\prod_{\Lambda \in S} G/\Lambda} 
     \leq \frac{\vol M}{\vol M_n} \cdot \stisv{M_n}.
  \]
  On the other hand, by the classical proportionality principle for 
  simplicial volume for hyperbolic manifolds 
  we have $\sv{M_n} > 0$ and 
  \[ \frac{\vol M}{\vol M_n} = \frac{\sv M}{\sv {M_n}}, 
  \]
  and so
  \[ \sv M \leq \ifsv M \leq \ifsv M ^{\prod_{\Lambda \in S} G/\Lambda} \leq \frac{\sv M}{\sv {M_n}} \cdot \stisv{M_n}. 
  \]
  Because of $\lim_{n \rightarrow \infty}\stisv{M_n}/\sv{M_n} = 1$ the
  right hand side converges to~$\sv M$ for~$n \rightarrow \infty$. Hence, 
  $\sv M = \ifsv M ^{\prod_{\Lambda \in S} G/\Lambda} = \ifsv M$, as desired.
\end{proof}

Similarly to Francaviglia, Frigerio, and Martelli~\cite[Question~6.4]{ffm}, 
we hence ask:

\begin{question}
  Does the integral foliated simplicial volume of oriented closed
  connected hyperbolic manifolds of dimension bigger than~$3$ also
  coincide with the simplicial volume? Does this even hold for all 
  oriented closed connected aspherical manifolds? 
\end{question}

\section{Stable integral simplicial volume and integral foliated 
simplicial volume of Seifert $3$-manifolds}
\label{sec:stisvifsv3mnf}

For the sake of completeness, we add also the computation of stable 
integral simplicial volume and integral foliated simplicial volume of 
Seifert $3$-manifolds:

\begin{prop}[Seifert case]
Let $M$ be an oriented compact connected Seifert manifold with 
$|\pi_1(M)|=\infty$. Then $\stisv{M}=0$.
\end{prop}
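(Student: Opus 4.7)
The plan is to exhibit, for $M$ as in the statement, a sequence of finite coverings $\tilde M_n \to M$ of degrees $d_n$ with $\isv{\tilde M_n}/d_n \to 0$, which forces $\stisv{M}=0$. First, I would reduce to the case where $M$ is a principal $S^1$-bundle over a closed orientable surface $\Sigma$. Every closed orientable Seifert $3$-manifold with infinite fundamental group admits such a finite cover: a finite cover of the Seifert base kills its orbifold cone points, yielding a cover of $M$ that is a principal $S^1$-bundle over $S^2$, $T^2$, or a surface of higher genus. For any $d$-fold cover $M' \to M$ the inequality $\stisv{M} \leq \stisv{M'}/d$ holds (directly from the definition: every finite cover of $M'$ is a finite cover of $M$ with degree multiplied by~$d$), so it suffices to prove $\stisv{M'}=0$ for $M'$ a principal $S^1$-bundle over a closed orientable surface $\Sigma$ of genus $g$ with Euler number $e \in \Z$. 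When $g=0$ the infinite-$\pi_1$ case reduces to $M' = S^2 \times S^1$, whose obvious self-covering $\id \times (z \mapsto z^n)$ gives $\stisv{S^2 \times S^1} \leq \isv{S^2 \times S^1}/n \to 0$; from now on I assume $g \geq 1$.

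For such an $M'$ with $g \geq 1$, I would construct covers via a combined base-and-fiber construction. For each $n \in \N$, take an $n$-fold cover $\tilde\Sigma_n \to \Sigma$ (which exists since $g \geq 1$) and pull the $S^1$-bundle back to $\tilde\Sigma_n$; the pullback has Euler number $ne$. Then take the $n$-fold fiber cover (which exists because $n \mid ne$), arriving at a principal $S^1$-bundle $\tilde M_n \to \tilde\Sigma_n$ of Euler number $e$. The composite covering $\tilde M_n \to M'$ has degree $n \cdot n = n^2$. Since $\tilde M_n$ is a Seifert fibered space whose base has genus $n(g-1)+1$ (growing linearly in $n$) while its Euler number $e$ is fixed, a standard efficient semi-simplicial triangulation of Seifert fibered $3$-manifolds -- triangulating the base with $O(n)$ $2$-simplices and filling in the $S^1$-fibration by a bounded number of tetrahedra per base simplex, plus a bounded correction encoding the Euler number -- yields, via Remark~\ref{semi-simplicial}, an estimate $\isv{\tilde M_n} \leq Cn$ for some constant $C$ independent of $n$. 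Consequently,
\[
\stisv{M'} \leq \frac{\isv{\tilde M_n}}{n^2} \leq \frac{C}{n} \xrightarrow{n\to\infty} 0,
\]
and hence $\stisv{M} = 0$.

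The main obstacle I anticipate is the linearity of the triangulation size in $n$: one must combine a base triangulation of size $O(n)$ with a bounded triangulation of the $S^1$-fibration, accounting for the twisting from the fixed Euler number in a way whose cost is bounded uniformly in $n$. This essentially recasts the classical statement that Matveev's complexity of a Seifert fibered $3$-manifold is linear in the complexity of its base for fixed Seifert invariants, upgraded to semi-simplicial form via (barycentric) subdivision, which affects only the multiplicative constant $C$.
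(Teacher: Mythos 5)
Your proposal is correct and follows essentially the same route as the paper (which itself adapts Francaviglia--Frigerio--Martelli, Proposition~5.11): pass to a finite cover that is an $S^1$-bundle over an orientable surface, then for each $n$ compose an $n$-fold base cover (Euler number becomes $ne$) with an $n$-fold fiber cover (Euler number returns to $e$) to get degree-$n^2$ covers whose integral simplicial volume grows only linearly in $n$. The one input you leave as a ``standard'' claim --- the $O(n)$ semi-simplicial triangulation for fixed Euler number --- is exactly what the paper extracts from the Martelli--Petronio complexity bound $c_S(\Sigma,e)\leq e+6\cdot\chi_-(\Sigma)+6$ together with a barycentric subdivision; also note that the proposition allows $\partial M\neq\emptyset$, a case the paper dispatches separately (there $e=0$, the cover is a product $S^1\times\Sigma$, and self-coverings of the $S^1$-factor suffice), which your reduction to closed manifolds silently skips.
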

\begin{proof}
We follow step by step the related argument of Francaviglia, Frigerio, and
Martelli~\cite[Proposition~5.11]{ffm}.  A Seifert
manifold has a finite covering that is an $S^1$-bundle over an orientable
surface~$\Sigma$ with some Euler number~$e\geq 0$. If the manifold~$M$
has boundary, then $e=0$ and the bundle is a product $S^1\times
\Sigma$.  Since $S^1 \times \Sigma$ covers itself with arbitrarily
high degree, $S^1 \times \Sigma$ (and hence also~$M$) clearly has
stable integral simplicial volume~$0$.

If $M$ is closed, we denote the covering mentioned above by $(\Sigma,e)$.
A closed connected orientable $S^1$-bundle over an 
orientable surface either is irreducible or $S^1\times S^2$
or $\mathbb{RP}^3\#\mathbb{RP}^3$.
Since $\mathbb{RP}^3\#\mathbb{RP}^3$ admits a double self-covering, in this case 
the stable integral simplicial volume vanishes. Moreover, 
we have already considered the case $S^1\times \Sigma$. 
Therefore we restrict to irreducible $S^1$-bundles.
By assumption, $(\Sigma,e)$ is a closed connected orientable 
irreducible Seifert manifold 
with $|\pi_1((\Sigma,e))|=\infty$. Thus, by Remark~\ref{complexity}, 
the complexity 
is equal to the special complexity and it has the following bound~\cite{MaPe}:
$$c_S(\Sigma,e)\leq e+6 \cdot \chi_-(\Sigma)+6$$
where $\chi_-(\Sigma)=\max\{-\chi(\Sigma),0\}$.

Let now $T$ be a minimal triangulation of $(\Sigma,e)$, and
let $T'$ be its first barycentric subdivision. Then $T'$ is a semi-simplicial 
triangulation that allows us to define an integral fundamental cycle.
Since the special complexity of $(\Sigma,e)$
is equal to the minimal number of tetrahedra
in a triangulation 
and the barycentric subdivision of~$\Delta^3$ consists 
of $24$~tetrahedra, we have 
$$\isv{(\Sigma,e)}\leq 24 \cdot c_S(\Sigma,e)\leq 24 \cdot \bigl(
e+6\cdot \chi_-(\Sigma)+6\bigr).$$ Now for every $d\in \N$ we
construct a degree-$d^2$ covering $(\overline \Sigma,  e)
\longrightarrow (\Sigma,e)$ where $\overline \Sigma$ is a $d$-sheeted
covering space of~$\Sigma$ as in the argument of Francaviglia,
Frigerio, and Martelli~\cite[Proposition~5.11]{ffm} in order to
conclude.
\end{proof}

In particular, we obtain:

\begin{cor}
Let $M$ be an oriented closed connected Seifert manifold with 
$|\pi_1(M)|=\infty$. Then 
\[ \sv M = \ifsv M = \stisv M =0. 
\]
\end{cor}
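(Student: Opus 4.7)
The plan is to observe that this corollary follows almost immediately from the preceding proposition combined with the sandwich estimate \textbf{(Proposition~\ref{prop:compsv})}. I would proceed as follows.

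First, I would invoke the preceding proposition, which states that any oriented compact connected Seifert manifold~$M$ with $|\pi_1(M)| = \infty$ satisfies $\stisv M = 0$. The case at hand (closed $M$) is a special instance.

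Next, I would apply the sandwich inequality
\[ \sv M \leq \ifsv M \leq \stisv M \]
from Proposition~\ref{prop:compsv}, which holds for every oriented closed connected manifold. Combined with $\stisv M = 0$, this forces all three invariants to vanish.

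There is essentially no obstacle here: the work has already been done in establishing $\stisv M = 0$ in the preceding proposition, and the corollary is a purely formal consequence of the monotonicity chain already proven in Section~\ref{sec:ifsv}. The only check is that the relevant hypotheses (oriented, closed, connected) are in place so that the sandwich estimate applies, which is given.
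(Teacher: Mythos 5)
Your proposal is correct and matches the paper's reasoning exactly: the paper introduces this corollary with ``In particular, we obtain,'' deriving it from the preceding proposition ($\stisv M = 0$) together with the sandwich estimate of Proposition~\ref{prop:compsv} and the non-negativity of all three invariants. No further comment is needed.
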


By Perelman's result a closed Seifert manifold with 
finite fundamental group admits an elliptic structure (i.e., it 
is a quotient of~$S^3$ by a finite subgroup of~$\mathrm{SO}(4)$ acting 
by rotations). 

\begin{prop}
  Let $n \in \N_{>0}$, and let $M$ be an oriented closed connected
  elliptic $n$-manifold. Then
$$\ifsv{M}=\stisv{M}= \frac1{|\pi_1(M)|} \cdot 
\begin{cases}
  1 & \text{if $n$ is odd}\\
  2 & \text{if $n$ is even.}
\end{cases}$$
In contrast, $\sv{M}=0$.
\end{prop}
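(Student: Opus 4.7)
The plan is to reduce the computation to the integral simplicial volume of the sphere $S^n$ and then to determine the latter by hand in each parity. An elliptic $n$-manifold~$M$ is by definition a quotient $S^n/G$ with $G$ a finite subgroup of $\mathrm{SO}(n+1)$ acting freely; in particular $\pi_1(M)=G$ is finite and $\ucov M = S^n$. Corollary~\ref{cor:finitepi1} therefore gives
\[ \ifsv M = \stisv M = \frac{1}{|\pi_1(M)|} \cdot \isv{S^n} \quad\text{and}\quad \sv M = 0, \]
so it remains to prove that $\isv{S^n} = 1$ for $n$ odd and $\isv{S^n} = 2$ for $n$ even.

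For the upper bounds I would use explicit cycles. Let $\sigma\colon \Delta^n \to S^n$ be the quotient map that collapses $\partial\Delta^n$ to a single point $p$; every face $\sigma\circ d_i$ is the constant $(n-1)$-simplex $c_p$, so $\partial\sigma = \bigl(\sum_{i=0}^n (-1)^i\bigr)\, c_p$. If $n$ is odd this sum vanishes, $\sigma$ is a cycle of $\ell^1$-norm $1$, and it represents a generator of $H_n(S^n;\Z)$ because the map $(\Delta^n, \partial\Delta^n) \to (S^n, \{p\})$ has degree~$\pm 1$. If $n$ is even, let $\tau$ be the constant $n$-simplex at $p$; then $\partial\tau = c_p = \partial\sigma$, so $\sigma - \tau$ is an absolute cycle of norm~$2$. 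In the relative group $H_n(S^n,\{p\};\Z)$ the chain $\tau$ lies in $C_n(\{p\})$, so $[\sigma - \tau] = [\sigma]$ there, and the long exact sequence of the pair identifies $H_n(S^n;\Z) \cong H_n(S^n, \{p\}; \Z)$ for $n \geq 1$; hence $\sigma - \tau$ is a fundamental cycle.

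For the lower bound, $\isv{S^n} \geq 1$ is trivial, and the only substantive point, which is also the main obstacle, is showing $\isv{S^n} \geq 2$ when $n$ is even. Suppose for contradiction that a fundamental cycle of norm $1$ exists; it must be $\pm\sigma$ for a single singular $n$-simplex with $\partial \sigma = \sum_{i=0}^n (-1)^i\, \sigma \circ d_i = 0$ in $C_{n-1}(S^n;\Z)$. Partitioning the positions $\{0, \ldots, n\}$ according to which distinct $(n-1)$-simplex each face $\sigma \circ d_i$ equals, and using that singular simplices are a basis of $C_{n-1}(S^n;\Z)$, the signed count $\sum_{i \in P}(-1)^i$ must vanish on each class $P$, forcing each $|P|$ to be even. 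Hence $n+1 = \sum_P |P|$ is even, contradicting the parity of~$n$. Everything else in the proof is either an application of Corollary~\ref{cor:finitepi1} or one of the explicit constructions above.
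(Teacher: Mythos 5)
Your proof follows the same route as the paper: reduce to $\isv{S^n}$ via Corollary~\ref{cor:finitepi1} and then evaluate the integral simplicial volume of the sphere in each parity. The paper dismisses that evaluation with ``clearly''; your explicit cycles (the collapse map $\Delta^n\to S^n$, corrected by a constant simplex when $n$ is even) and the parity argument showing that no single singular simplex can be a cycle in even dimensions correctly supply exactly the missing details, so the proposal is complete and correct.
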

\begin{proof}
  Corollary~\ref{cor:finitepi1} shows that
  \[ \ifsv M = \stisv M = \frac1{|\pi_1(M)|} \cdot \isv[norm]{S^n}. 
  \]
  Clearly, $\isv[norm]{S^n} =1$ if $n$ is odd, and $\isv[norm]{S^n}=2$ 
  if $n$ is even.

On the other hand,the simplicial volume of an elliptic manifold is always zero
because simplicial volume is multiplicative under finite coverings 
\cite[Proposition 4.1]{loeh} and $\sv{S^n} = 0$ 
  for all~$n\in \N_{>0}$.
\end{proof}

\end{document}